\newtheorem{thm}{Theorem}[section]
\newtheorem{prop}[thm]{Proposition}
\newtheorem{lem}[thm]{Lemma}
\newtheorem{definition}[thm]{Definition}
\numberwithin{equation}{section}
\numberwithin{thm}{section}
\title[Besov spaces]{
Besov spaces on open sets}
\author[T. Iwabuchi, T. Matsuyama, 
K. Taniguchi]{Tsukasa Iwabuchi, Tokio Matsuyama 
and Koichi Taniguchi}
\address{ 
Tsukasa Iwabuchi \endgraf 
Department of Mathematics \endgraf 
Osaka City University \endgraf
3-3-138 Sugimoto, Sumiyoshi-ku \endgraf
Osaka 558-8585 \endgraf
Japan}
\email{iwabuchi@sci.osaka-cu.ac.jp}
\address{ 
Tokio Matsuyama \endgraf 
Department of Mathematics \endgraf 
Chuo University \endgraf 
1-13-27, Kasuga, Bunkyo-ku \endgraf 
Tokyo 112-8551 \endgraf 
Japan}
\email{tokio@math.chuo-u.ac.jp} 
\address{ 
Koichi Taniguchi \endgraf 
Department of Mathematics \endgraf 
Chuo University \endgraf 
1-13-27, Kasuga, Bunkyo-ku \endgraf 
Tokyo 112-8551 \endgraf 
Japan} 
\email{koichi-t@gug.math.chuo-u.ac.jp} 
\thanks{
 The first author was supported by 
 Grant-in-Aid for Young
 Scientists Research (B) (No. 25800069), 
 Japan Society for the Promotion of Science.
 The second author was supported by 
Grant-in-Aid for Scientific 
Research (C) (No. 15K04967), 
Japan Society for the Promotion of Science. 
}
\keywords{Besov spaces, Schr\"odinger operators, potential of Kato class}
\begin{document}

\footnote[0]
{2010 {\it Mathematics Subject Classification.} 
Primary 30H25; Secondary 81Q10, 46F05;}

\begin{abstract}
This paper is devoted to 
giving  definitions of 
Besov spaces
on an arbitrary open set of $\mathbb R^n$ 
via the spectral theorem 
for the Schr\"odinger operator with the Dirichlet boundary condition. 
The crucial point is to introduce some 
test function spaces on $\Omega$. 
The fundamental properties 
of Besov spaces are also shown, such as 
embedding relations and duality,
etc. Furthermore, 
the isomorphism relations are established among the Besov spaces 
in which regularity of functions is measured by 
the Dirichlet Laplacian and the Schr\"odinger operators. 
\end{abstract}

\maketitle


\section{Introduction}

In 1959--61 Besov introduced the Besov spaces in his papers \cite{Besov-1959,Besov-1961}. 
There are a lot of literatures on characterization of Besov spaces, 
and we refer to the books of  
Triebel \cite{Triebel_1983,Triebel_1992,Triebel_2006} for history of Besov spaces. 
It was by Peetre that the Fourier transform was employed 
to study the Besov spaces on $\mathbb{R}^n$ 
(see \cite{Pee-1967,Pee-1975,Pee_1976}, 
and also Frazier and Jawerth
\cite{FraJaw-1985,FraJaw-1990
}).  
On a general domain, if the boundary is bounded and smooth, 
the theory of Besov spaces is well established by extending functions on the domain to 
those on $\mathbb R^n$. 
Otherwise, the situation is quite different 
as is indicated in previous studies (see e.g. \cite{Triebel_1983,Triebel-2002}), 
and there appear to be considerable difficulties to construct such theory. 
\\

Let $\Omega$ be an open set of $\mathbb{R}^n$ with $n\geq 1$.
Our aim is to define 
the Besov spaces on $\Omega$
based on the spectral theory
by referring to Peetre's idea. 
If the boundary $\partial \Omega$ of $\Omega$
is smooth, then
some basic notions are available; 
the restriction method of the function on 
$\mathbb R^n$ to $\Omega$, 
the zero extension to the outside of $\Omega$, 
and certain intrinsic characterization 
(see \cite{Mura-1973,Ryc-1998,Ryc-1999,Triebel_1978,Triebel_1983,Triebel_1992,
Triebel-2002,Triebel_2006,TriWin-1996}). 
Recently, 
Bui, Duong and Yan introduced 
some test function spaces
to define
the Besov spaces $\dot B^s_{p,q}$ 
on an arbitrary open set, 
where 
$s,p$ and $q$ satisfy 
$|s|<1$ and $1 \leq p,q \leq \infty$ 
(see \cite{BuDuYa-2012}).
They also proved the equivalence relation 
among 
the Besov spaces generated by 
the Laplacian and some operators, 
including the Schr\"odinger operators, 
on the whole space $\mathbb R^n$, $n \geq 3$ 
with some additional conditions such as H\"older continuity 
for the kernel of semi-group generated by them. 
As to the results on the Besov spaces generated by
the elliptic operators on manifolds, or Hermite operators,
we refer to 
\cite{BuDuYa-2012,BuPaTa-1996,BuPaTa-1997,BuDu-2015,BenZhe-2010,DeSh-1993,KePe-2015} 
and the references therein. 
To the best of our knowledge, 
it is 
necessary to impose some smoothness assumptions on the boundary $\partial \Omega$ 
in order to define the Besov spaces $B^s_{p,q}$ and $\dot B^s_{p,q}$ 
with all indices $s,p,q$ satisfying 
$ s \in \mathbb R$ and $ 1 \leq p,q \leq \infty$. \\

In this paper we shall define the Besov spaces 
$B^s_{p,q}$ and $\dot B^s_{p,q}$ 
generated by 
the Schr\"odinger operator $-\Delta + V$ with the Dirichlet boundary condition for 
all indices $s, p,q$ {\em without any 
geometrical and smoothness assumption on the boundary 
$\partial\Omega$}, 
and shall 
prove the fundamental properties such as embedding relations and 
lifting, etc. 
Furthermore, regarding the Besov spaces generated by 
the Dirichlet Laplacian as the standard one,
and adopting the potential $V$ 
belonging to the Lorentz space $L^{\frac{n}{2},\infty} (\Omega)$, 
we shall establish the equivalence relation between 
the Besov spaces generated by the Dirichlet Laplacian $-\Delta_{|D} $ 
and Schr\"odinger operator $-\Delta _{|D} + V$. 
The motivation of the study of such properties and equivalence relation comes 
from their applications to partial differential equations, 
and one can consult the papers of 
D'Ancona and Pierfelice (see \cite{DP-2005}), Georgiev and Visciglia (see \cite{GV-2003})
and Jensen and Nakamura (see \cite{JN-1994,JN-1995}). 
\\

Let us consider the Schr\"odinger operator 
\[
- \Delta 
+ V(x) 
= -\sum^{n}_{j=1} \frac{\partial^2}{\partial x^2_j}+V(x ) 
\]
on an arbitrary open set $\Omega$
with the Dirichlet boundary condition, 
where 
$V(x)$ is a real-valued measurable function on $\Omega$. 
In this paper we adopt potentials whose 
negative parts belong to the Kato class. 
More precisely, let us assume that the 
potential $V$ 
satisfies 
\begin{equation}\label{1104-1}
V = V_{+} - V_-, \quad 
V_{\pm} \geq 0, \quad 
V_+ \in  L^1_{\rm loc} (\Omega) 
\text{ and } 
V_- \in K_n (\Omega) . 
\end{equation}
Here, 
the negative part $V_-$ of $V$ 
  is said to belong to the Kato class 
$K_{n}( \Omega)$ if $V_-$ satisfies 
\begin{align}\notag 
\left\{
\begin{aligned}
	&\lim_{r \rightarrow 0} \sup_{x \in \Omega} \int_{\Omega \cap \{|x-y|<r\}} 
	   \frac{|V_-(y)|}{|x-y|^{n-2}} \,dy = 0, &n\ge 3, \\
	&\lim_{r \rightarrow 0} \sup_{x \in \Omega} \int_{\Omega \cap \{|x-y|<r\}} 
	   \log (|x-y|^{-1})|V_-(y)| \,dy = 0, &n=2, \\
	&\sup_{x \in \Omega}\int_{\Omega \cap \{|x-y|<1\}} |V_-(y)| \,dy <\infty, &n=1. 
	\end{aligned}\right.
\end{align}
Then $-\Delta  + V$ has a self-adjoint realization on $L^2 (\Omega)$ 
(see Lemma~\ref{lem:s.a.} in 
appendix~\ref{App:AppendixA}). 
Throughout this paper, we use the following notation: 
\\

\noindent 
{\bf Notation. }
{\it 
We denote by $A_V$ the self-adjoint realization of $-\Delta + V$ with the domain 
\begin{equation}\label{EQ:ID}
\mathcal D (A_V) 
= \big\{ f \in H^1_0(\Omega)  \, \big| \, 
    \sqrt{V_+} f, \, A_V f \in L^2 (\Omega) \big\},
\end{equation}
where 
$H^1_0(\Omega)$ is the completion of $C_0^\infty (\Omega)$ 
with respect to the norm
$$
\| f \|_{L^2 (\Omega)} + \| \nabla f \|_{L^2 (\Omega)} .
$$
}  

By the spectral theorem
there exists a spectral resolution 
$\{ E_{A_V}(\lambda) \}_{\lambda \in \mathbb{R}}$
of the identity and $A_V$ is written as
\[
	A_V = \int^{\infty}_{-\infty} \lambda \, d E_{A_V}(\lambda).  
\]
For a Borel measurable function $\phi(\lambda)$ on $\mathbb R$, 
$\phi(A_V)$ is defined by letting
\[
	\phi(A_V) = \int^{\infty}_{-\infty} \phi (\lambda) \, d E_{A_V}(\lambda)  
\]
with the domain 
\[
\mathcal D ( \phi(A_V)) =\left\{ f \in L^2 (\Omega) \, \Big| \, 
\int_{-\infty}^\infty 
|\phi (\lambda)|^2 d \| E_{A_V} (\lambda) f \|_{L^2 (\Omega)} ^2 < \infty
\right\}. 
\]
Due to such a spectral resolution, 
we can define the Sobolev spaces $H^s(A_V) $ by 
letting 
\begin{equation}\label{Hs}
H^s(A_V) 
= 
\left\{ f \in L^2 (\Omega) \, \big| \, 
   (I+A_V ) ^{\frac{s}{2}} f \in L^2 (\Omega) 
\right\} \quad \text{for } s \geq 0. 
\end{equation}
Then, 
the regularity and boundary value of functions in $H^s (A_V)$ are 
determined by $A_V$.  
Hereafter, we call $H^s (A_V)$ 
{\em the Sobolev spaces by $A_V$-regularity}. 
In particular case $\Omega = \mathbb R^n$ and $V = 0$, 
the Sobolev spaces defined in \eqref{Hs} coincide with 
the Bessel-potential spaces defined via the Fourier transform.

\vskip3mm 

We shall apply the above characterization of $H^s (A_V)$ 
to those of the inhomogeneous and homogeneous Besov spaces (see Theorem \ref{thm:1} below). 
For the Besov spaces by this characterization, 
we  obtain fundamental properties of the spaces 
(see Propositions \ref{thm:2}--\ref{thm:7} below) 
and 
find a sufficient 
condition on the integrability of $V$ such that 
the isomorphism holds between the Besov spaces by $A_0$ and $A_V$-regularity 
(see Proposition \ref{thm:6} below). 
{\em It should be noted that 
our framework on open sets $\Omega$ of $\mathbb R^n$ 
is the most general setting. The crucial point is to introduce 
test function spaces on $\Omega$.}
\\

Let us recall the definitions of the 
test function spaces on $\mathbb R^n$ and 
the classical Besov spaces, i.e., 
spaces when 
$\Omega  = \mathbb R^n$ and $V = 0$. 
It is well known that the inhomogeneous 
Besov spaces and homogeneous ones are 
characterized as subspaces of 
$\mathcal{S}'(\mathbb R^n)$ and $\mathcal{Z}'(\mathbb R^n)$ 
by the Littlewood-Paley 
dyadic decomposition of the spectrum of $\sqrt {-\Delta}$, namely, 
$B^s_{p,q} $ and $\dot B^s_{p,q} $ consist 
of all $f \in \mathcal S' (\mathbb R^n)$ and 
$\mathcal Z '(\mathbb R^n)$ such that 
\begin{gather}
\notag 
\| f \|_{B^s_{p,q}} =
    \big\| \mathcal F^{-1} \psi (  | \xi | ) \mathcal F f \big\|_{L^p  (\mathbb R^n)} + 
    \big\| \big\{ 2^{sj} \| \mathcal F^{-1} \phi (2^{-j}|\xi| ) \mathcal F f \|_{L^p  (\mathbb R^n)} 
    \big\} _{j \in \mathbb N} \big\|_{\ell ^q (\mathbb N)} < \infty , 
\\ \notag 
\| f \|_{\dot B^s_{p,q}} = 
    \big\| \big\{ 2^{sj} \| \mathcal F^{-1} \phi (2^{-j} |\xi| ) \mathcal F f \|_{L^p  (\mathbb R^n)} 
    \big\} _{j \in \mathbb Z} \big\|_{\ell ^q (\mathbb Z)} < \infty , 
\end{gather}
respectively, for some smooth functions $\psi , \phi$ with compact supports. 
Here $\mathcal S^\prime (\mathbb R^n)$ is
the space of the tempered distributions on 
$\mathbb R^n$, which is the 
topological dual 
of the Schwartz space 
$\mathcal{S}(\mathbb R^n)$. The space 
$\mathcal S (\mathbb R^n)$ consists of rapidly decreasing functions 
equipped with the family 
of semi-norms 
\begin{equation}\label{1105-5}
\sup _{x \in \mathbb R^n} 
  (1+ |x|^2)^{\frac{M}{2}} \sum _{|\alpha |\leq M} |\partial_x ^\alpha f (x)|, 
  \quad 
  M = 1,2, \cdots.
\end{equation}
$\mathcal Z'(\mathbb R^n)$ is the dual space of 
$\mathcal Z(\mathbb R^n)$, which is the subspace
of $\mathcal{S}(\mathbb R^n)$ defined by letting
\begin{equation}\label{18-3}
\mathcal Z(\mathbb R^n) 
:= 
\Big\{ f \in \mathcal S (\mathbb R^n) 
\, \Big| \, 
\int_{\mathbb R^n} x^\alpha f(x) dx = 0  \,\, 
\text{ for all } \alpha \in 
( \mathbb N\cup \{ 0 \})^n
\Big\} 
\end{equation}
endowed with the induced topology of 
$\mathcal S (\mathbb R^n)$. 
It is known that 
$\mathcal Z' (\mathbb R^n)$ is characterized by 
the quotient space of $\mathcal S' (\mathbb R^n)$ modulo polynomials, i.e.,
\[
\mathcal Z' (\mathbb R^n) \simeq \mathcal S' (\mathbb R^n)/\mathcal{P}(\mathbb R^n),
\]
where $\mathcal{P}(\mathbb R^n)$ is the set of
all polynomials of $n$ real variables 
(see e.g. \cite{Triebel_1983,Grafakos_2014n}).\\

When $\Omega \not = \mathbb R^n$, a question naturally arises 
what the spaces corresponding to $\mathcal S' (\mathbb R^n)$ 
and $\mathcal Z'(\mathbb R^n)$ are.  
We  introduce a kind of spaces as $\mathcal X_V '(\Omega)$ 
and $\mathcal Z_V' (\Omega)$ in \S\S 2.1. 
There we will encounter with two problems 
in the formulations:
\begin{enumerate}
\item[(a)] 
To handle the neighborhood of zero spectrum in 
 the definition of the homogeneous Besov spaces;

\item[(b)] 
To develop the dyadic resolution of identity operators 
on our spaces $\mathcal X'_V(\Omega)$ 
and $\mathcal Z_V ' (\Omega)$; dyadic resolution lifted from $L^2 (\Omega)$.  

\end{enumerate}

Let us explain the problem 
(a). Looking at the definition \eqref{18-3} of 
$\mathcal Z (\mathbb R^n)$, 
one understands that the low frequency part 
of $f$ is treated by 
\begin{equation}\label{EQ:low}
\int_{\mathbb R^n} x^\alpha f(x) dx = 0  
\quad \text{for any  } \alpha \in (\mathbb N \cup \{ 0 \})^n.
\end{equation}
However, when $\Omega\ne \mathbb R^n$ 
it seems 
difficult to get an idea corresponding to \eqref{EQ:low}. To overcome 
this difficulty, instead of \eqref{EQ:low}, 
we propose 
\begin{equation}\label{18-8n}
\sup_{j \leq 0} 2^{M |j| } 
\big\| \phi_j(\sqrt{A_V})  f \big\|_{L^1 (\Omega)} < \infty 
 , \quad M = 1,2,\cdots
\end{equation}
in semi-norms $q_{V,M}(\cdot)$ of a test function space $\mathcal{Z}_V(\Omega)$ 
(see \eqref{EQ:qV} below), where we put
\[
\phi_j (\sqrt{A_V}) := \phi (2^{-j} \sqrt{A_V}). 
\] 
This is probably 
 a main novelty in our work. 
The condition \eqref{18-8n} seems one of important ingredients 
to introduce test function spaces not only for Besov spaces but also 
for other spaces of homogeneous type. 
We note that a kind of the problem of zero spectrum does not appear 
in the inhomogeneous Besov spaces,  
and hence, our spaces $\mathcal X_V (\Omega)$ and $\mathcal X_V ' (\Omega)$ 
may be analogous to the test function spaces and their duals
introduced by Kerkyacharian and Petrushev~\cite{KePe-2015} 
(see also Ruzhansky and Tokmagambetov~\cite{RT-toappear} who treat 
$H^s (A_V)$ on a bounded open set, and the operator $A_V$ does not have to be 
self-adjoint). 
\\

We turn to explain the problem 
(b). For the sake of simplicity,
let us consider the case when 
$V = 0$. Clearly, in this case, $A_V$ becomes the Dirichlet Laplacian $A_0$.  
As is well-known, the identity operator is resolved by the dyadic 
decomposition of the spectrum for the Dirichlet Laplacian in $L^2 (\Omega)$, 
namely, 
\begin{equation}\label{18-1n}
I = \psi (A_0) + \sum _{ j \in \mathbb N } \phi_j (\sqrt{A_0}) , 
\end{equation}
which is assured by the spectral theorem, 
where $\psi$ is a smooth function such that 
$$
\psi (\lambda ^2) + \sum _{ j \in \mathbb N} \phi _j (\lambda) = 1 
\quad \text{for any } \lambda \geq 0 . 
$$
Initially, the resolution \eqref{18-1n} holds in $L^2 (\Omega) $, 
and then, it is lifted to the space $\mathcal X_0 ' (\Omega)$.  
This argument is accomplished in Lemma~\ref{lem:decomposition1} below. 
When one considers $ \mathcal Z'_0 (\Omega)$,
\eqref{18-1n} is replaced by 
\begin{equation}\label{18-2n}
I = \sum _{ j \in \mathbb Z} \phi_j (\sqrt{A_0}) . 
\end{equation}
Thanks to these resolutions 
\eqref{18-1n} and \eqref{18-2n}, 
the well known methods in the classical Besov spaces on $\mathbb R^n$ work well also in the present case. The
starting point of this argument is to extend the spectral restriction operators $\phi_j (\sqrt{A_0})$ 
on $L^2 (\Omega)$ 
to those on $L^1 (\Omega)$. There, 
the uniform boundedness on 
 $L^1 (\Omega)$ 
of $\{ \phi_j (\sqrt{A_0}) \}_j$, i.e., 
\begin{equation}\label{1126-1n}
\sup _{j }\| \phi_j (\sqrt{A_0}) \| _{L^1 (\Omega) \to L^1 (\Omega)} < \infty 
\end{equation}
plays a crucial role in proving \eqref{18-1n} in 
$\mathcal{X}_0 '(\Omega)$ and \eqref{18-2n} in $\mathcal Z_0 ' (\Omega)$, 
respectively. 
For the proof, see Proposition 
\ref{prop:Lp-bound} in appendix A (see also \cite{IMT-preprint,IMT-ISAAC}). 
Furthermore, 
\eqref{1126-1n} guarantees the independence of the choice of $\{ \phi_j \}_{j \in \mathbb Z} \cup \{ \psi \}$,
when we define spaces $\mathcal X_0 (\Omega) $, $\mathcal X_0 ' (\Omega)$, $\mathcal Z_0 (\Omega)$, 
$\mathcal Z_0 ' (\Omega)$ and Besov spaces defined in \S 2. 
\\

This paper is organized as follows. 
In \S 2, we state a main result on the Besov spaces 
by $A_V$-regularity. 
\S 3 is devoted to stating some 
fundamental properties of Besov spaces. 
In \S 4, we introduce key lemmas and 
fundamental properties of test function spaces on $\Omega $, 
which are essential for our theory. 
\S5--\S9 are devoted to the proof of our results. 
In appendix \ref{App:AppendixA}, we 
show the uniform $L^p$-boundedness of $\phi (\theta A_V)$, 
the self-adjointness of $A_V$ and the pointwise estimate for the kernel of $e^{-tA_V}$ 
which are verified with some modifications 
of our previous work~\cite{IMT-preprint}. 
Finally, we prove in appendix \ref{App:AppendixB} that zero is not an eigenvalue of $A_V$ 
under some smallness condition on the negative part of $V$. 


\section{Statement of results}

In this section we shall state several results 
on the Besov spaces 
by $A_V$-regularity. 
We divide this section into two subsections: 
the introduction of test function spaces, and 
statement of the result.

\subsection{Definitions of test function spaces on $\Omega$}
In this subsection we shall define 
``test function spaces" 
consisting of functions smooth and integrable on $\Omega$ 
 and spaces of a kind of ``tempered distributions'' 
as follows: \\

Let $\phi_0(\cdot) \in C^\infty_0(\mathbb R)$ be a non-negative function on $\mathbb R$ such that  
\begin{equation}
\label{917-1}
{\rm supp \, } \phi _0
\subset \{ \, \lambda \in \mathbb R \, | \, 2^{-1} \leq \lambda \leq 2 \, \}, 
\quad \sum _{ j \in \mathbb Z} \phi_0 ( 2^{-j}\lambda) 
 = 1 
 \quad \text{for } \lambda > 0,  
\end{equation}
and $ \{ \phi_j \}_{j \in \mathbb Z}$ is defined by letting 
\begin{gather} \label{917-2}
\phi_j (\lambda) = \phi_0 (2^{-j} \lambda) 
 \quad \text{for }  \lambda \in \mathbb R . 
\end{gather}

\begin{definition}\label{def:1}

\begin{enumerate}
\item[(i)] {\rm (}Linear topological spaces
$\mathcal X_V (\Omega)$ and $\mathcal X^\prime_V (\Omega)${\rm ).}
Assume that the measurable potential $V$ 
satisfies \eqref{1104-1}. 
Then a linear topological space 
$\mathcal X_V (\Omega)$ is 
defined by letting
\begin{equation}\notag 
\mathcal X_V (\Omega) 
:= \big\{ f \in  L^1 (\Omega) \cap \mathcal D (A_V) 
 \, \big| \, 
    A^{M}_V f \in L^1(\Omega ) \cap \mathcal D (A_V) \text{ for all } M \in \mathbb N 
   \big\} 
\end{equation} 
equipped with the family of semi-norms $\{ p_{V,M}
 (\cdot) \}_{ M = 1 } ^\infty$ 
given by 
\begin{equation}\notag 
p_{V,M}(f) := 
\| f \|_{ L^1(\Omega)} 
+ \sup _{j \in \mathbb N} 2^{Mj} 
  \| \phi_j (\sqrt{A_V}) f \|_{ L^1(\Omega)} . 
\end{equation}
$\mathcal X'_V(\Omega)$ denotes the topological dual of 
$\mathcal X_V (\Omega)$ 
and 
$
{}_{{\mathcal X}_V' }
\langle f , g \rangle _{{\mathcal X}_V}  
$
is the duality pair of  
$f \in \mathcal{X}'_V(\Omega)$
 and 
$g \in \mathcal{X}_V(\Omega)$. 
A sequence $\{ f_N \}_{N = 1} ^\infty$ in $\mathcal X_V ' (\Omega)$ 
is said to converge to $f \in \mathcal X_V ' (\Omega)$ if 
$$
 {}_{{\mathcal X}_V' }
\langle f_N , g \rangle _{{\mathcal X}_V}  
\to  {}_{{\mathcal X}_V' }
\langle f , g \rangle _{{\mathcal X}_V}  
\quad \text{as } N \to \infty 
\quad \text{for any } g \in \mathcal X_V (\Omega). 
$$

\item[(ii)] {\rm (}Linear topological spaces
$\mathcal Z_V (\Omega)$ and $\mathcal Z^\prime_V (\Omega)${\rm ).} 
Assume that the measurable potential $V$ 
satisfies \eqref{1104-1} and 
\begin{gather}
\label{ass:1}
\begin{cases} 
V _- = 0
& \quad \text{if } n = 1,2, 
\\
\displaystyle \sup _{x \in \Omega} 
   \int_{\Omega} \dfrac{|V_- (y)|}{|x-y|^{n-2}} \, dy 
 < \dfrac{\pi^{\frac{n}{2}}}{\Gamma (n/2 -1)}
& \quad \text{if } n \geq 3 . 
\end{cases}
\end{gather}
Then a linear topological space 
$\mathcal Z_V (\Omega)$ is 
defined by letting
\begin{equation}\notag 
\mathcal Z_V (\Omega) 
:= \Big\{ f \in \mathcal X_V (\Omega) 
 \, \Big| \, 
  \sup_{j \leq 0} 2^{ M |j|} 
    \big\| \phi_j \big(\sqrt{ A_V } \big ) f \big \|_{L^1(\Omega)} < \infty 
  \text{ for all } M \in \mathbb N
   \Big\} 
\end{equation}
equipped 
with the family of semi-norms $\{ q_{V,M} (\cdot) \}_{ M = 1}^\infty$ 
given by 
\begin{equation}\label{EQ:qV} 
q_{V,M}(f) := 
\| f \|_{L^1 (\Omega) }
+ \sup_{j \in \mathbb Z} 2^{M|j|} \| \phi_j (\sqrt{A_V}) f \|_{L^1(\Omega)}. 
\end{equation}
$\mathcal Z'_V(\Omega)$ denotes the topological dual of $\mathcal Z_V (\Omega)$ 
and 
${}_{{\mathcal Z}_V' }
\langle f , g \rangle _{{\mathcal Z}_V}$ 
is the duality pair of  
$f \in \mathcal{Z}'_V(\Omega)$
 and 
$g \in \mathcal{Z}_V(\Omega)$. 
A sequence $\{ f_N \}_{N = 1} ^\infty$ in $\mathcal Z_V ' (\Omega)$ 
is said to converge to $f \in \mathcal Z_V ' (\Omega)$ if 
$$
 {}_{{\mathcal Z}_V' }
\langle f_N , g \rangle _{{\mathcal Z}_V}  
\to  {}_{{\mathcal Z}_V' }
\langle f , g \rangle _{{\mathcal Z}_V} 
\quad \text{as } N \to \infty 
\quad \text{for any } g \in \mathcal Z_V (\Omega). 
$$

\item[(iii)] 
{\rm (}Spaces by the Dirichlet Laplacian regularity{\rm )}.  
In particular case $V=0$, we write 
$\mathcal{X}_V(\Omega)$, 
$\mathcal{X}^\prime_V(\Omega)$, 
$\mathcal{Z}_V(\Omega)$ and 
$\mathcal{Z}^\prime_V(\Omega)$ as 
\[
\mathcal{X}_0(\Omega), \quad  
\mathcal{X}^\prime_0(\Omega), \quad 
\mathcal{Z}_0(\Omega) \quad \text{and} \quad 
\mathcal{Z}^\prime_0(\Omega), 
\]
respectively. 
\end{enumerate}

\end{definition}


We notice 
from assumption 
\eqref{ass:1} 
that  $A_V$ is non-negative on $L^2 (\Omega)$ 
and that zero is not an eigenvalue of $A_V$ 
as well as the Dirichlet Laplacian. 
In fact, these results hold for a weaker assumption \eqref{ass:B_1} in appendix \ref{App:AppendixB}. 
We also note that 
assumption \eqref{ass:1} excludes the potential $V$ like 
$$
V(x) = -c |x|^{-2}, 
\quad c > 0 . 
$$
For more details, see the remark after the statement of Proposition \ref{prop:Lp-bound} 
in appendix~A. 
\\

Functions in the Lebesgue spaces are 
regarded as elements in $\mathcal X_V'(\Omega)$ and $\mathcal Z_V'(\Omega)$ 
analogously to the case for $\mathcal S' (\mathbb R^n)$ 
and $\mathcal Z' (\mathbb R^n)$,  respectively. 
Lemma \ref{cor:1} below assures that 
$$
\int_ {\Omega} \big|f (x) \overline{g (x) } \big| \, dx  < \infty  
$$
for any $f \in L^p (\Omega)$, $1 \leq p \leq \infty$, and $g \in \mathcal X_V (\Omega)$ 
($g \in \mathcal Z_V (\Omega) $ resp.). 
So, we define: 

\begin{definition}\label{def:2}
For $f\in L^1(\Omega)+L^\infty(\Omega)$,  
we identify 
$f$ as an element in $\mathcal{X}^\prime_V(\Omega)$
{\rm (}$\mathcal{Z}^\prime_V(\Omega)$ resp.{\rm )}
by letting
\[
{}_{\mathcal{X}'_V} \langle f , g \rangle _{\mathcal{X}_V} 
=  \int_\Omega f(x)\overline{g(x)}\, dx 
\quad \left(
{}_{\mathcal{Z}'_V} \langle f , g \rangle _{\mathcal{Z}_V} 
=  \int_\Omega f(x)\overline{g(x)}\, dx
\quad \mathrm{resp.}\right)
\]
for any $g\in \mathcal{X}_V(\Omega)$ 
{\rm (}$g\in \mathcal{Z}_V(\Omega)$ 
resp.{\rm )}. 
\end{definition}

For a mapping 
$\phi (A_V)$ on $\mathcal X_V (\Omega)$ ($\mathcal Z_V (\Omega)$ resp.), 
we define the dual operator 
of $\phi (A_V)$ on 
$\mathcal X_V' (\Omega)$ ($\mathcal Z_V' (\Omega)$ resp.)
induced naturally from that on $L^2 (\Omega)$. 

\begin{definition}\label{def:3}
\begin{enumerate}
\item[(i)] 
For a mapping $\phi (A_V): \mathcal X_V (\Omega) \to \mathcal X_V (\Omega)$, 
we define $\phi (A_V): \mathcal X'_V (\Omega) \to \mathcal X'_V (\Omega)$ 
by letting 
\begin{equation}\label{901-4}
{}_{{\mathcal X}'_V }
\big\langle \phi (A_V) f , g \big\rangle _{{\mathcal X}_V} 
:= 
{}_{{\mathcal X}'_V } \big\langle f , \phi (A_V) g \big\rangle_{{\mathcal X}_V } 
\quad \text{for all } g\in \mathcal X_V(\Omega). 
\end{equation}

\item[(ii)] 
For a mapping $\phi (A_V) : \mathcal Z_V (\Omega) \to \mathcal Z_V (\Omega)$, 
we define $\phi (A_V) : \mathcal Z'_V (\Omega) \to \mathcal Z'_V (\Omega)$ 
by letting 
\begin{equation}\label{901-5}
{}_{\mathcal{Z}'_V} \big\langle \phi (A_V) f , g \big\rangle _{\mathcal{Z}_V}
:= 
{}_{\mathcal{Z}'_V} \big\langle f , \phi (A_V) g \big\rangle _{\mathcal{Z}_V} 
\quad \text{for all } g\in \mathcal Z_V(\Omega).  
\end{equation}

\end{enumerate}
\end{definition}


It is shown in Lemma \ref{EQ:Frechet} 
below 
that $\mathcal X_V (\Omega)$ and 
$\mathcal Z_V (\Omega)$ are complete, 
and hence, they are Fr\'echet spaces. 
Needless to say,
it is not possible to define an operator 
$\sqrt{A_V}$ if the spectrum of $A_V$ contains 
negative real numbers. 
However, since $\phi_j  (\lambda) = 0$ for $\lambda \leq 0$, 
we define $\phi_j (\sqrt{A_V}) $ as 
$$
\phi_j (\sqrt{A_V}) 
= \int_{0}^{\infty} \phi_j (\sqrt{\lambda}) dE_{A_V} (\lambda).
$$

\medskip 

Let us give a few remarks on properties of 
 $\mathcal X_0(\Omega)$ and $\mathcal Z_0(\Omega)$ as follows: 

\vskip3mm

\begin{itemize}
\item When $\Omega=\mathbb R^n$ 
and $V=0$, the Schwartz space $\mathcal S(\mathbb R^n)$ is contained in 
$\mathcal X_0 (\mathbb R^n)$, and 
the inclusion for tempered distributions are just opposite. Namely, 
it can be readily checked 
from Definition \ref{def:1} that  
\begin{gather}
\label{EQ:INC} 
\mathcal S (\mathbb R^n) 
\hookrightarrow \mathcal X _0 (\mathbb R^n) 
\hookrightarrow \mathcal X' _0 (\mathbb R^n) 
\hookrightarrow \mathcal S' (\mathbb R^n), 
\\ \notag 
\mathcal Z (\mathbb R^n) 
\hookrightarrow \mathcal Z _0 (\mathbb R^n) 
\hookrightarrow \mathcal Z' _0 (\mathbb R^n) 
\hookrightarrow \mathcal Z' (\mathbb R^n), 
\\ \notag 
C_0 ^\infty (\mathbb R^n) \subset \mathcal X_0(\mathbb R^n), \quad  
C_0 ^\infty (\mathbb R^n) \not \subset 
\mathcal Z _0 (\mathbb R^n).
\end{gather}

\item 
When $\Omega=\mathbb R^n$ and $V=0$, the restriction of 
low frequency in the definition \eqref{EQ:qV} 
of $q_{0,M}(f)$ is natural,
since one can show 
that any element 
$f\in \mathcal S (\mathbb R^n)$ 
belongs to 
$\mathcal Z (\mathbb R^n)$ if and only if 
$q_{0,M} (f) < \infty$ for $ M = 1,2, \dots$. 

\item 
When $\Omega \not = \mathbb R^n$,
any $f \in \mathcal X_0 (\Omega)$ 
or $\mathcal Z _0(\Omega)$ satisfies 
\[
f \equiv 0 \quad \text{on }  \partial \Omega,
\]
since $f \in H^1_0 (\Omega)$. 
Hence, 
the condition $p_{0,M} (f) < \infty$ 
not only determines smoothness 
and integrability of $f$ 
but also assures the Dirichlet boundary condition. 
Also, such an $f$ contacts with $\partial\Omega$ of order infinity in the following way:
$$
A_{0}^M f \equiv 0 \quad \text{on }  \partial \Omega  ,
\quad M = 0,1,2,\cdots.
$$
The same assertion holds for $\mathcal X_V (\Omega)$, 
$\mathcal Z_V (\Omega)$ and $A_V$. 

\item 
In order to simplify the argument, 
instead of the polynomial weights 
appearing on semi-norms \eqref{1105-5} 
in $\mathcal S (\mathbb R^n)$, we adopted 
the integrability condition on $f$. 
\end{itemize}

\quad

Based on Definitions 
\ref{def:1}--\ref{def:3}, we establish 
the definition of Besov spaces on an arbitrary 
open set of $\mathbb R^n$ in \S\S 2.2.


\subsection{Statement of the main result} 

We are in this subsection to state the result.
Let $\{ \phi_j \}_{j \in \mathbb Z} \cup \{ \psi \}$ be
the Littlewood-Paley dyadic decomposition, namely, the sequence 
$\{\phi_j\}_{j \in \mathbb Z}$ is defined by \eqref{917-2}, and 
$\psi$ is a smooth function with compact support around the origin. 
Here, we note that if $V$ satisfies assumption \eqref{1104-1}, 
then the spectrum of $A_V$ may admit to be negative.  
It is shown in Lemma~\ref{lem:s.a.} in 
appendix A that
there exists a positive constant $\lambda _0$ such that 
\begin{equation}\label{EQ:A-bound}
A_V \geq - \lambda _0^2I . 
\end{equation}
Then we need to choose  
the function $\psi $ such that 
\begin{gather} \label{917-3}
\psi (\lambda) = 1 \quad 
\text{ for }
\lambda \in [-\lambda _0^2 , 0],
\qquad 
\psi (\lambda^2) 
 + \sum _{ j \in \mathbb N}  \phi_j (\lambda) 
 = 1 
 \quad \text{for } \lambda \geq 0. 
\end{gather}
Based on this choice of $\psi$, let us introduce the definition of the Besov spaces 
by $A_V$-regularity.


\begin{definition}
For $s \in \mathbb R$ and $1 \leq p,q \leq \infty$, 
we define the inhomogeneous and homogeneous Besov spaces as follows{\rm :} 
\begin{enumerate}
\item[(i)] 
 $B^s_{p,q} (A_V) $ is defined by letting 
\begin{equation}\notag
B^s_{p,q} (A_V)
:= \{ f \in \mathcal X'_V(\Omega) 
     \, | \, 
     \| f \|_{B^s_{p,q} (A_V)} < \infty
   \} , 
\end{equation}
where
\begin{equation}\notag 
     \| f \|_{B^s_{p,q} (A_V)} 
       := \| \psi ( A_V) f \|_{L^p (\Omega) } 
          + \big\| \big\{ 2^{sj} \| \phi_j (\sqrt{A_V}) f \|_{L^p (\Omega) }  
                 \big\}_{j \in \mathbb  N}
          \big\|_{\ell^q (\mathbb N)}. 
\end{equation}

\item[(ii)] 
$\dot B^s_{p,q} (A_V) $ is defined by letting
\begin{equation} \notag 
\dot B^s_{p,q}  (A_V)
:= \{ f \in \mathcal Z'_V(\Omega) 
     \, | \, 
     \| f \|_{\dot B^s_{p,q}(A_V)} < \infty 
   \} , 
\end{equation}
where
\begin{equation}\notag 
     \| f \|_{\dot B^s_{p,q}(A_V)} 
       := \big\| \big\{ 2^{sj} \| \phi_j (\sqrt{A_V}) f\|_{L^p(\Omega)}
                 \big\}_{j \in \mathbb Z}
          \big\|_{\ell^q (\mathbb Z)}. 
\end{equation}
\end{enumerate}
\end{definition}

\vskip3mm


Our main result can now be 
formulated in the following way:
\begin{thm}\label{thm:1}
For any $s,p,q$ with $s \in \mathbb R$ and $1 \leq p,q \leq \infty$, the 
following assertions hold{\rm :}

\begin{enumerate}
\item[(i)] 
{\rm(}Inhomogeneous Besov spaces{\rm)} Assume that 
the measurable potential $V$ satisfies \eqref{1104-1}. Then{\rm :} 

\begin{enumerate}
\item[ (i-a)] 
$B^s_{p,q} (A_V)$ is independent 
of the choice of $\{ \psi \} \cup \{ \phi_j\}_{j \in \mathbb N}$ 
satisfying 
\eqref{917-1}, 
\eqref{917-2} and \eqref{917-3}, 
and enjoys the following{\rm :} 
\begin{equation}\label{902-1}
\mathcal X_V(\Omega) 
\hookrightarrow B^s_{p,q } (A_V)
\hookrightarrow \mathcal X'_V(\Omega).
\end{equation} 

\item[ (i-b)]
$B^s_{p,q}(A_V)$ 
is a Banach space.
\end{enumerate}

\item[(ii)]
{\rm (}Homogeneous Besov spaces{\rm )} 
Assume that 
the measurable potential $V $ satisfies \eqref{1104-1} and \eqref{ass:1}. 
Then{\rm :}

\begin{enumerate}
\item[ (ii-a)]
$\dot B^s_{p,q} (A_V)$ is independent 
of the choice of $\{ \phi_j \}_{j \in \mathbb Z}$ 
satisfying \eqref{917-1} and \eqref{917-2},  
and enjoys the following{\rm :} 
\begin{equation} \label{902-2}
\mathcal Z_V(\Omega) 
\hookrightarrow \dot B^s_{p,q } (A_V)
\hookrightarrow \mathcal Z'_V(\Omega) . 
\end{equation}

\item[ (ii-b)]
$\dot B^s_{p,q} (A_V)$ is a Banach space.
\end{enumerate}

\end{enumerate}
\end{thm}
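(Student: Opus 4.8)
The plan is to reduce the four assertions to a handful of core facts about the spectral operators $\phi_j(\sqrt{A_V})$ and $\psi(A_V)$ acting on the test function spaces and their duals, the basic ones being: the uniform $L^1\to L^1$ bound \eqref{1126-1n} (its $A_V$-version coming from Proposition~\ref{prop:Lp-bound} in appendix~A), the resolutions of the identity \eqref{18-1n}--\eqref{18-2n} lifted to $\mathcal X'_V(\Omega)$ and $\mathcal Z'_V(\Omega)$ (Lemma~\ref{lem:decomposition1}), the Fréchet property of $\mathcal X_V(\Omega)$, $\mathcal Z_V(\Omega)$ (Lemma~\ref{EQ:Frechet}), and the integrability statement of Lemma~\ref{cor:1}. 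Throughout I would use that the functions $\psi$, $\phi_j$ are smooth with the support/partition-of-unity properties \eqref{917-1}, \eqref{917-2}, \eqref{917-3}, so that products such as $\phi_j\phi_k$ vanish unless $|j-k|\le 1$, and that $\phi_j(\sqrt{A_V})$, $\psi(A_V)$ map $\mathcal X_V(\Omega)$ (resp. $\mathcal Z_V(\Omega)$) into itself continuously --- this last point I would record first, since it is needed even to make sense of Definition~\ref{def:3} and of $\|f\|_{B^s_{p,q}(A_V)}$ for $f$ in a dual space.

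For (i-a), I would first show the two embeddings in \eqref{902-1}. The right-hand inclusion $B^s_{p,q}(A_V)\hookrightarrow\mathcal X'_V(\Omega)$ follows by writing, for $f\in B^s_{p,q}(A_V)$ and $g\in\mathcal X_V(\Omega)$, the pairing via the resolution $I=\psi(A_V)+\sum_{j\in\mathbb N}\phi_j(\sqrt{A_V})$ applied to $g$, estimating each term by Hölder $\|\phi_j(\sqrt{A_V})f\|_{L^p}\|\phi_j(\sqrt{A_V})g\|_{L^{p'}}$ (after inserting a fattened $\widetilde\phi_j$ so that $\phi_j=\phi_j\widetilde\phi_j$), and using the rapid decay of $2^{Mj}\|\phi_j(\sqrt{A_V})g\|_{L^1}$ built into $p_{V,M}(g)$ together with the uniform $L^1\to L^p$ and $L^\infty\to L^p$ bounds to beat the polynomial growth $2^{sj}$; this also shows the functional $g\mapsto{}_{\mathcal X'_V}\langle f,g\rangle_{\mathcal X_V}$ is continuous on $\mathcal X_V(\Omega)$ with norm controlled by $\|f\|_{B^s_{p,q}(A_V)}$. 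The left-hand inclusion $\mathcal X_V(\Omega)\hookrightarrow B^s_{p,q}(A_V)$ is the easier direction: for $g\in\mathcal X_V(\Omega)$, the $\ell^q$ norm of $2^{sj}\|\phi_j(\sqrt{A_V})g\|_{L^p}$ is dominated by a finite sum of semi-norms $p_{V,M}(g)$ (choosing $M>|s|+$ a slack for the $\ell^q$ summation and interpolating $L^p$ between $L^1$ and $L^\infty$), and $\|\psi(A_V)g\|_{L^p}\lesssim\|g\|_{L^1\cap L^\infty}$. Independence of the decomposition is then the standard argument: given a second system $\{\widetilde\psi\}\cup\{\widetilde\phi_j\}$, one inserts $I$ expanded in the tilded system, uses almost-orthogonality ($\phi_j\widetilde\phi_k\equiv0$ for $|j-k|$ large) and the uniform $L^p$ bounds to get a two-sided norm equivalence --- here \eqref{1126-1n} (and its $L^p$ analogue) is exactly what makes the interchange of the operators legitimate at the level of $\mathcal X'_V(\Omega)$.

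For (i-b), completeness: given a Cauchy sequence $\{f_k\}$ in $B^s_{p,q}(A_V)$, the sequences $\{\psi(A_V)f_k\}$ and $\{\phi_j(\sqrt{A_V})f_k\}$ (each $j$) are Cauchy in $L^p(\Omega)$, hence converge to some $g_0,g_j\in L^p(\Omega)$; one defines $f\in\mathcal X'_V(\Omega)$ by $\langle f,g\rangle:=\langle g_0,\psi(A_V)g\rangle+\sum_j\langle g_j,\phi_j(\sqrt{A_V})g\rangle$ (absolute convergence as in (i-a), using that the $B^s_{p,q}$-norms of $f_k$ are uniformly bounded so the $\ell^q$-tails are controlled), checks $\psi(A_V)f=g_0$, $\phi_j(\sqrt{A_V})f=g_j$ via the duality definition of these operators on $\mathcal X'_V(\Omega)$, and then lower semicontinuity of the norm under the established convergence gives $f\in B^s_{p,q}(A_V)$ and $f_k\to f$; the $\ell^q$ (resp. $\ell^\infty$) completeness is used to pass the limit inside the $\ell^q$-norm. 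Part (ii) is carried out in the same way with $\mathbb Z$ replacing $\mathbb N$, the resolution \eqref{18-2n} replacing \eqref{18-1n}, no $\psi$-term, and the extra low-frequency control $\sup_{j\le0}2^{M|j|}\|\phi_j(\sqrt{A_V})g\|_{L^1}<\infty$ in the semi-norms $q_{V,M}$ of $\mathcal Z_V(\Omega)$ supplying the convergence of the $j\to-\infty$ tail of the pairing; the hypothesis \eqref{ass:1} is what guarantees $A_V\ge0$ with $0$ not an eigenvalue, so that $\sum_{j\in\mathbb Z}\phi_j(\sqrt{A_V})=I$ genuinely holds on $L^2(\Omega)$ and lifts to $\mathcal Z'_V(\Omega)$.

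\emph{Main obstacle.} I expect the crux to be the low-frequency endpoint in the homogeneous case: establishing that \eqref{18-2n} lifts from $L^2(\Omega)$ to $\mathcal Z'_V(\Omega)$ and that the pairing $\sum_{j\in\mathbb Z}\langle g_j,\phi_j(\sqrt{A_V})g\rangle$ converges absolutely for $g\in\mathcal Z_V(\Omega)$ when $f\in\dot B^s_{p,q}(A_V)$ with, say, $s$ very negative and $p=\infty$ --- one must balance the possible growth $2^{sj}$ as $j\to-\infty$ against the decay $2^{M|j|}$ in $q_{V,M}(g)$ together with the $L^1\to L^p$ smoothing, and the fact that zero is not an eigenvalue (appendix~B) is essential here to avoid a nontrivial kernel component that the dyadic pieces cannot see. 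The inhomogeneous case is comparatively soft because the $\psi(A_V)$-block absorbs the whole low-frequency range and no cancellation is required there. The remaining routine points --- continuity of $\phi_j(\sqrt{A_V})$ on the Fréchet spaces, the almost-orthogonality bookkeeping, and lower semicontinuity of the Besov norm --- I would treat briefly, citing Lemmas~\ref{lem:decomposition1}, \ref{EQ:Frechet}, \ref{cor:1} and Proposition~\ref{prop:Lp-bound}.
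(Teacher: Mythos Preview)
Your plan for (i-a), (ii-a) and the independence of the dyadic system is essentially the paper's own argument: almost-orthogonality plus the uniform $L^p$-bounds of Lemma~\ref{lem:calc} for the independence, and the resolution of the identity from Lemma~\ref{lem:decomposition1} together with the seminorm decay in $p_{V,M}$, $q_{V,M}$ for the two-sided embeddings. Your diagnosis of the low-frequency issue in the homogeneous case and the role of \eqref{ass:1} is also exactly right.

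There is, however, a genuine slip in your completeness argument. With your definition
\[
\langle f,g\rangle:=\langle g_0,\psi(A_V)g\rangle+\sum_{j}\langle g_j,\phi_j(\sqrt{A_V})g\rangle,
\]
the claimed identity $\phi_\ell(\sqrt{A_V})f=g_\ell$ does \emph{not} follow. Unwinding the duality definition and using $g_j=\lim_N\phi_j(\sqrt{A_V})f_N$ in $L^p$ gives
\[
\langle \phi_\ell(\sqrt{A_V})f,g\rangle
=\lim_{N}\Big\langle f_N,\ \Big(\psi(A_V)^2+\sum_{j}\phi_j(\sqrt{A_V})^2\Big)\phi_\ell(\sqrt{A_V})\,g\Big\rangle,
\]
and the multiplier $\psi^2+\sum_j\phi_j^2$ is not $1$ (the partition of unity is $\psi+\sum_j\phi_j=1$, not the sum of squares). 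So your $f$ is not the weak-$*$ limit of $\{f_N\}$, and the verification step fails as written.

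The paper avoids this by a cleaner route: once \eqref{902-1} (resp.\ \eqref{902-2}) is in hand, the Cauchy sequence $\{f_N\}$ is Cauchy in $\mathcal X'_V(\Omega)$ (resp.\ $\mathcal Z'_V(\Omega)$), which is weak-$*$ sequentially complete because $\mathcal X_V(\Omega)$ (resp.\ $\mathcal Z_V(\Omega)$) is Fr\'echet (Lemma~\ref{EQ:Frechet}); this produces the limit $f$ directly in the dual. Continuity of $\phi_j(\sqrt{A_V})$ on the dual (Lemma~\ref{lem:mapping}) then gives $\phi_j(\sqrt{A_V})f_N\to\phi_j(\sqrt{A_V})f$ there, which together with the $L^p$-convergence forces $\phi_j(\sqrt{A_V})f=g_j$ a.e.\ and hence $f\in B^s_{p,q}(A_V)$ with $f_N\to f$. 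If you prefer to keep your direct construction, the minimal repair is to use the fattened functions $\Psi,\Phi_j$ in place of $\psi,\phi_j$ in the defining formula for $f$, since $\psi\Psi+\sum_j\phi_j\Phi_j=\psi+\sum_j\phi_j=1$ on the spectrum; then the same computation genuinely yields $\phi_\ell(\sqrt{A_V})f=g_\ell$.
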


\vskip3mm

Let us give a remark on the theorem. 
It is meaningful to consider the space $\mathcal X_V ' (\Omega)$ 
($\mathcal Z_V '(\Omega) $ resp.),
when one defines the spaces $B^s_{p,q} (A_V)$
($\dot B^s_{p,q} (A_V) $ resp.).  
In fact, when $\Omega = \mathbb R^n$ and $ V=0$,  
we see from \eqref{EQ:INC} that
$$
C_0 ^\infty (\mathbb R^n) 
\subset \mathcal S (\mathbb R^n) 
\hookrightarrow \mathcal X _0 (\mathbb R^n) 
\hookrightarrow \mathcal X' _0 (\mathbb R^n) 
\hookrightarrow \mathcal S' (\mathbb R^n). 
$$
Since $C_0 ^\infty (\mathbb R^n)$ is dense in the classical Besov spaces $B^s_{p,q}$ 
for $s \in \mathbb R$ and 
$1 \leq p , q < \infty$, 
$B^s_{p,q} $ as subspaces of $\mathcal X'_0(\mathbb R^n)$ 
are isomorphic to those as subspaces of $\mathcal S'(\mathbb R^n)$. 
Similarly, 
$\dot B^s_{p,q} $ as subspaces of $\mathcal Z'_0(\mathbb R^n)$ 
are isomorphic to those as subspaces of $\mathcal Z'(\mathbb R^n)$. 

\section{Dual spaces, embedding relations, lifting properties and isomorphic properties}

In this section, we shall introduce important properties of Besov spaces.
Let us consider the dual spaces 
of Besov spaces, lifting properties and embedding relations. 
\\

The following proposition is concerned with 
the dual spaces. 


\begin{prop}\label{thm:2}
Assume that $V$ satisfies the same assumptions as in Theorem \ref{thm:1}. 
Let $s \in \mathbb R$, 
$1 \leq p,q < \infty$, $1/p + 1/p' = 1$ and $1/q + 1/q' = 1$. 
Then the dual spaces of $B^s_{p,q}(A_V)$ and $\dot B^s_{p,q}(A_V)$ 
are $B^{-s}_{p',q'} (A_V)$ and $\dot B^{-s}_{p',q'} (A_V) $, respectively. 
\end{prop}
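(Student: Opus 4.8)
The plan is to establish the duality via the standard retraction/co-retraction argument, transferring the problem to a statement about sequence-valued Lebesgue spaces $\ell^q_s(L^p)$, whose dual is classically $\ell^{q'}_{-s}(L^{p'})$. First I would set up the analysis operator $J$ sending $f$ to the sequence $\{\phi_j(\sqrt{A_V})f\}_j$ (together with $\psi(A_V)f$ in the inhomogeneous case), which by definition maps $B^s_{p,q}(A_V)$ isometrically into $\ell^q_s(L^p(\Omega))$ (resp.\ $\dot B^s_{p,q}(A_V)$ into $\ell^q_s(L^p(\Omega))$ over $j\in\mathbb Z$). Then I would construct a synthesis operator $S$ in the reverse direction using a ``fattened'' partition: choosing $\widetilde\phi_j$ supported near the support of $\phi_j$ with $\widetilde\phi_j\phi_j=\phi_j$, set $S(\{g_j\}_j) = \sum_j \widetilde\phi_j(\sqrt{A_V})g_j$ (with the analogous low-frequency term). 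The key facts I need here are that $S$ is bounded from $\ell^q_s(L^p)$ into $B^s_{p,q}(A_V)$ and that $S\circ J = \mathrm{id}$; the boundedness of $S$ rests on the uniform $L^p$-boundedness of the spectral multipliers $\phi_j(\sqrt{A_V})$, i.e.\ Proposition~\ref{prop:Lp-bound}, together with an almost-orthogonality (Bernstein/finite-overlap) argument to sum the dyadic pieces — in the $\ell^1$ or $\ell^\infty$ summation endpoints this is the usual triangle-inequality bound, and for $1<q<\infty$ one sums the geometric series coming from the bounded overlap of supports. Convergence of the series $S(\{g_j\})$ in $\mathcal X'_V(\Omega)$ (resp.\ $\mathcal Z'_V(\Omega)$) should follow by pairing against a test function and using the rapid decay built into the semi-norms $p_{V,M}$ (resp.\ $q_{V,M}$).

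Granting the retraction diagram $B^s_{p,q}(A_V) \xrightarrow{J} \ell^q_s(L^p) \xrightarrow{S} B^s_{p,q}(A_V)$ with $SJ = \mathrm{id}$, the duality follows formally: a bounded functional $\Lambda$ on $B^s_{p,q}(A_V)$ pulls back via $S$ to a bounded functional on $\ell^q_s(L^p)$, hence is represented (using $1\le p,q<\infty$ so that the relevant dual identifications hold) by a sequence $\{h_j\}_j \in \ell^{q'}_{-s}(L^{p'})$; conversely any such sequence defines a functional through $J$. The element of $B^{-s}_{p',q'}(A_V)$ that one associates to $\Lambda$ is then $g = \sum_j \widetilde\phi_j(\sqrt{A_V})h_j$, and one checks $\langle g, f\rangle = \sum_j \langle h_j, \phi_j(\sqrt{A_V})f\rangle = \Lambda(f)$ for $f$ in a dense subspace (e.g.\ $\mathcal X_V(\Omega)$, resp.\ $\mathcal Z_V(\Omega)$, which is dense because $1\le p,q<\infty$ — this density should itself be recorded, likely following from \eqref{902-1}, \eqref{902-2} together with a standard truncation argument). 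The norm equivalence $\|g\|_{B^{-s}_{p',q'}(A_V)} \approx \|\Lambda\|$ comes from the boundedness of both $J$ and $S$ in the dual scale.

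The main obstacle I anticipate is not the abstract duality bookkeeping but verifying that the synthesis operator $S$ genuinely lands in $B^s_{p,q}(A_V) \subset \mathcal X'_V(\Omega)$ and is bounded — equivalently, proving the ``almost-orthogonality'' summation lemma in this generality, where one has no Fourier transform and must work purely with the functional calculus of $A_V$. Concretely, one needs $\|\phi_k(\sqrt{A_V})\widetilde\phi_j(\sqrt{A_V})\|_{L^p\to L^p}$ to vanish unless $|k-j|$ is bounded, which is immediate from support conditions $\phi_k\widetilde\phi_j \equiv 0$ for $|k-j|\geq 2$, so in fact the dyadic sum collapses to a bounded overlap and the estimate reduces cleanly to Proposition~\ref{prop:Lp-bound}; the only genuinely delicate point is then the convergence of $\sum_j \widetilde\phi_j(\sqrt{A_V})g_j$ in the weak topology of $\mathcal X'_V(\Omega)$ (resp.\ $\mathcal Z'_V(\Omega)$) for a general $\ell^q_s$ sequence, which I would handle by testing against $g \in \mathcal X_V(\Omega)$, moving $\widetilde\phi_j(\sqrt{A_V})$ onto $g$ by self-adjointness, and invoking the defining rapid-decay semi-norm estimates to get an absolutely convergent numerical series. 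A secondary technical point is the low-frequency term in the inhomogeneous case and the two-sided decay in the homogeneous case near $j\to-\infty$, for which assumption \eqref{ass:1} (ensuring $0$ is not an eigenvalue of $A_V$) is exactly what makes the homogeneous analysis work.
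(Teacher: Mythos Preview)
Your proposal is correct and follows essentially the same route as the paper: the paper implements exactly the retraction/co-retraction scheme you describe, defining (in the homogeneous case) a synthesis map $G\mapsto \sum_j 2^{-sj}\phi_j(\sqrt{A_V})G_j$ from $\ell^q(L^p)$, pulling a given functional $F\in \dot B^s_{p,q}(A_V)^*$ back through it, representing the result by $\{F_j\}\in\ell^{q'}(L^{p'})$ via $(\ell^q L^p)^*=\ell^{q'}L^{p'}$, and then pushing forward to $f=\sum_j 2^{sj}\Phi_j(\sqrt{A_V})F_j\in \dot B^{-s}_{p',q'}(A_V)$ with $\Phi_j=\phi_{j-1}+\phi_j+\phi_{j+1}$ playing the role of your $\widetilde\phi_j$. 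The only minor divergence is that the paper avoids your density step by invoking Lemma~\ref{lem:decomposition1}~(ii) to write $g=\sum_j\phi_j(\sqrt{A_V})g$ directly in $\mathcal Z'_V(\Omega)$ for every $g\in\dot B^s_{p,q}(A_V)$, so the identity $F(g)=\langle f,g\rangle$ holds on the whole space without approximation.
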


We have the lifting properties and embedding 
relations of our Besov spaces.

\begin{prop}\label{thm:3}
Assume that $V$ satisfies the same assumptions as in Theorem \ref{thm:1}. 
Let $\lambda_0$ be the constant as in \eqref{EQ:A-bound}, i.e., 
$A_V \geq - \lambda_0^2I$.
Let $s, s_0 \in \mathbb R$ and $1 \leq p,q, q_0 , r \leq \infty$. Then 
the following assertions hold{\rm :}
\begin{enumerate}
\item[(i)] 
The inhomogeneous Besov spaces 
enjoy the following properties{\rm :} 
\begin{equation}\notag 
\begin{split}
& 
\big\{ (\lambda _0^2 + 1) I + A_V \big\}^{s_0/2}  f 
 \in B^{s-s_0} _{p,q} (A_V) 
 \quad \text{for any } f \in B^s_{p,q} (A_V) {\rm ;} 
\\ 
& 
B^{s+\varepsilon}_{p,q} (A_V)
\hookrightarrow B^s_{p,q_0} (A_V)
\quad 
\text{for any } \varepsilon > 0 {\rm ;}
\\
& 
B^s_{p,q} (A_V)
\hookrightarrow B^{s_0}_{p, q} (A_V)
\quad \text{if } s \geq s_0 {\rm ;}
\\ 
& 
B^{s+ n (\frac{1}{r} - \frac{1}{p})}_{r,q} (A_V) 
 \hookrightarrow 
 B^s_{p,q_0} (A_V)
\quad \text{if 
$1 \leq r \leq p \leq \infty$ and 
$q \leq q_0$.} 
\end{split}
\end{equation}

\item[(ii)]
The homogeneous Besov spaces enjoy the following
properties{\rm :}
\begin{equation}\notag 
\begin{split}
& 
A_V^{s_0/2} f \in \dot B^{s-s_0} _{p,q} (A_V)
\quad \text{for any } f \in \dot B^s_{p,q} (A_V) {\rm ;}
\\
& 
\dot B^{s+ n (\frac{1}{r} - \frac{1}{p})}_{r,q} (A_V)  
 \hookrightarrow 
 \dot B^s_{p,q_0} (A_V)
\quad \text{if} \quad 
1 \leq r \leq p \leq \infty \text{ and } q \leq q_0. 
\end{split}
\end{equation}
\end{enumerate}
\end{prop}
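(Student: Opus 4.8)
The plan is to reduce all six assertions to two basic ingredients together with elementary sequence-space bookkeeping. The first ingredient is the uniform spectral-multiplier estimate of Appendix~\ref{App:AppendixA} (Proposition~\ref{prop:Lp-bound}): for any family $\{m_\theta\}_{\theta>0}$ of smooth functions supported in a fixed compact subset of $(0,\infty)$ with uniformly bounded derivatives one has $\sup_{\theta>0}\|m_\theta(\theta A_V)\|_{L^p(\Omega)\to L^p(\Omega)}<\infty$, and likewise $\|m(A_V)\|_{L^p(\Omega)\to L^p(\Omega)}<\infty$ for a fixed $m\in C_0^\infty(\mathbb R)$; this is combined with the facts, established in \S4, that such multipliers act continuously on $\mathcal X_V(\Omega)$, $\mathcal Z_V(\Omega)$ and, by Definition~\ref{def:3}, on their duals, so that all operators below are well defined on the relevant distribution spaces and the spectral identities used may be transferred there from $L^2(\Omega)$. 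The second ingredient is the Bernstein inequality
\[
\big\|\phi_j(\sqrt{A_V})h\big\|_{L^p(\Omega)}\le C\,2^{nj\left(\frac1r-\frac1p\right)}\|h\|_{L^r(\Omega)},\qquad 1\le r\le p\le\infty,
\]
for $j\in\mathbb N$ in the inhomogeneous case and for all $j\in\mathbb Z$ in the homogeneous case. To prove it, factor $\phi_j(\sqrt{A_V})=e^{-2^{-2j}A_V}\,\widetilde m(2^{-2j}A_V)$ with $\widetilde m(\mu):=e^{\mu}\phi_0(\sqrt\mu)$, which is smooth and supported in $\{1/4\le\mu\le4\}$: by the first ingredient $\widetilde m(2^{-2j}A_V)$ is uniformly bounded on $L^r(\Omega)$, while the pointwise Gaussian bound for the kernel of $e^{-tA_V}$ from Appendix~\ref{App:AppendixA} gives $\|e^{-tA_V}\|_{L^r(\Omega)\to L^p(\Omega)}\le C\,t^{-\frac n2(\frac1r-\frac1p)}$; since $A_V\ge0$ under assumption~\eqref{ass:1}, this last estimate holds with $C$ independent of $t\in(0,\infty)$ (in the inhomogeneous case $t=2^{-2j}\le1/4$ is bounded, so only \eqref{1104-1} is used).

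For the lifting property in (i), set $g:=\{(\lambda_0^2+1)I+A_V\}^{s_0/2}f$, which lies in $\mathcal X_V'(\Omega)$ because the lifting operator preserves $\mathcal X_V(\Omega)$ by the results of \S4. On the spectral side $\phi_j(\sqrt{A_V})\{(\lambda_0^2+1)I+A_V\}^{s_0/2}=2^{s_0j}\,\eta_j(2^{-2j}A_V)$ with $\eta_j(\mu):=\phi_0(\sqrt\mu)\big(2^{-2j}(\lambda_0^2+1)+\mu\big)^{s_0/2}$; for $j\in\mathbb N$ these are smooth, supported in $\{1/4\le\mu\le4\}$, with uniformly bounded derivatives, so using the first ingredient and the fact that $\eta_j$ is supported where $\sum_{|k-j|\le 2}\phi_k\equiv1$ one gets $\|\phi_j(\sqrt{A_V})g\|_{L^p}\le C\,2^{s_0j}\sum_{|k-j|\le2}\|\phi_k(\sqrt{A_V})f\|_{L^p}$. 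Multiplying by $2^{(s-s_0)j}$ and taking the $\ell^q(\mathbb N)$-norm bounds this by $C\|f\|_{B^s_{p,q}(A_V)}$; the low-frequency term $\psi(A_V)g=m(A_V)f$, with $m(\lambda):=\psi(\lambda)(\lambda_0^2+1+\lambda)^{s_0/2}\in C_0^\infty(\mathbb R)$ (the base of the power is positive on $\mathrm{supp}\,\psi$, which we may take inside $(-\lambda_0^2-1,\infty)$), is treated the same way. The homogeneous lifting $A_V^{s_0/2}f\in\dot B^{s-s_0}_{p,q}(A_V)$ is identical, now with the $j$-independent multiplier $\mu\mapsto\phi_0(\sqrt\mu)\,\mu^{s_0/2}$ (smooth since $\mathrm{supp}\,\phi_0\subset(0,\infty)$), with $j$ ranging over $\mathbb Z$, and with no spectral shift because $A_V\ge0$.

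The two remaining inhomogeneous embeddings are elementary: $B^{s+\varepsilon}_{p,q}(A_V)\hookrightarrow B^s_{p,q_0}(A_V)$ follows by writing $2^{sj}\|\phi_jf\|_{L^p}=2^{-\varepsilon j}\,2^{(s+\varepsilon)j}\|\phi_jf\|_{L^p}$ and applying Hölder on $\mathbb N$ when $q_0\le q$ or the inclusion $\ell^q\hookrightarrow\ell^{q_0}$ together with $2^{-\varepsilon j}\le1$ when $q_0\ge q$; and $B^s_{p,q}(A_V)\hookrightarrow B^{s_0}_{p,q}(A_V)$ for $s\ge s_0$ is immediate from $2^{s_0j}\le2^{sj}$ for $j\in\mathbb N$; in both the $\psi$-component is left unchanged. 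For the Sobolev-type embeddings $B^{s+n(\frac1r-\frac1p)}_{r,q}(A_V)\hookrightarrow B^s_{p,q_0}(A_V)$ and $\dot B^{s+n(\frac1r-\frac1p)}_{r,q}(A_V)\hookrightarrow\dot B^s_{p,q_0}(A_V)$, apply the Bernstein inequality to $h=\phi_j(\sqrt{A_V})f$, inserting a slightly enlarged projector $(\phi_{j-2}+\dots+\phi_{j+2})(\sqrt{A_V})$ that equals the identity on the range of $\phi_j(\sqrt{A_V})$: with $\sigma:=s+n(\frac1r-\frac1p)$ one obtains $2^{sj}\|\phi_j(\sqrt{A_V})f\|_{L^p}\le C\,2^{\sigma j}\|\phi_j(\sqrt{A_V})f\|_{L^r}$, and taking $\ell^{q_0}$-norms and using $\ell^q\hookrightarrow\ell^{q_0}$ (valid since $q\le q_0$) bounds the left side by $C\|f\|_{\dot B^\sigma_{r,q}(A_V)}$ in the homogeneous case and by $C\|f\|_{B^\sigma_{r,q}(A_V)}$ in the inhomogeneous case; in the latter the low-frequency term obeys $\|\psi(A_V)f\|_{L^p}\le C\|\psi(A_V)f\|_{L^r}+C\sum_{\text{finite}}\|\phi_j(\sqrt{A_V})f\|_{L^r}$ by the same factorization applied to $\psi(A_V)=e^{-A_V}\widetilde\psi(A_V)$.

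The genuine obstacle is the Bernstein inequality in the homogeneous regime $j\in\mathbb Z$: it requires the sharp heat-semigroup bound $\|e^{-tA_V}\|_{L^r(\Omega)\to L^p(\Omega)}\lesssim t^{-\frac n2(\frac1r-\frac1p)}$ with a constant uniform over all $t\in(0,\infty)$, and this is exactly where nonnegativity of $A_V$ together with the Gaussian kernel estimate of Appendix~\ref{App:AppendixA} — hence assumption~\eqref{ass:1} — are indispensable. Everything else is either the elementary $\ell^q$-inequalities above or a direct application of the uniform spectral-multiplier bounds of Appendix~\ref{App:AppendixA}, combined with the continuity of those multipliers on the test function spaces and their duals developed in \S4.
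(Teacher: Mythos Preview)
Your argument is correct and follows the paper's overall blueprint: the embedding relations reduce to Lemma~\ref{lem:calc} together with elementary $\ell^q$-inequalities, and the lifting properties come down to uniform $L^p$-multiplier bounds plus continuity on the test-function spaces. The one substantive difference is in the inhomogeneous lifting. You treat the family $\eta_j(\mu)=\phi_0(\sqrt\mu)\big(2^{-2j}(\lambda_0^2+1)+\mu\big)^{s_0/2}$ directly as a uniformly bounded family of compactly supported symbols. The paper instead writes $(\lambda_0^2+1+A_V)^{s_0/2}=T_1+T_2$ with $T_2=2^{s_0j}(2^{-2j}A_V)^{s_0/2}$ handled by \eqref{inhom}, and expresses $T_1$ as the integral $\int_0^{2^{-2j}(\lambda_0^2+1)}\tfrac{s_0}{2}(\theta+2^{-2j}A_V)^{s_0/2-1}\,d\theta$, bounding this first for $p=2$ by the spectral theorem and then for general $p$ by appeal to~\cite{IMT-preprint}. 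Your route is tidier, but it uses Proposition~\ref{prop:Lp-bound} in a form uniform over \emph{families} of symbols with common support and uniformly bounded derivatives; that is how such multiplier theorems are actually proved, but it is slightly more than the literal statement, so you should say so explicitly. One further minor point: your claim that $\{(\lambda_0^2+1)I+A_V\}^{s_0/2}$ preserves $\mathcal X_V(\Omega)$ ``by the results of \S4'' is not directly covered there, since Lemma~\ref{lem:mapping} only treats compactly supported multipliers; you need the short additional argument (decompose $g\in\mathcal X_V(\Omega)$ dyadically and sum, as the paper does for $A_V^{s_0/2}$ on $\mathcal Z_V(\Omega)$ in establishing \eqref{0204-5}).
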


The Besov and Lebesgue spaces
have the inclusion relation with each other.

\begin{prop}\label{thm:4}
Assume that $V$ satisfies the same assumptions as in Theorem \ref{thm:1}. Then 
the following continuous embeddings hold{\rm :} 
\begin{enumerate}
\item[(i)]
$L^p (\Omega) 
 \hookrightarrow 
 B^0_{p,2} (A_V), \dot B^0_{p,2} (A_V) $ \,\,\,
 if \,\,\,$1 < p \leq 2$. 

\item[(ii)]
$B^0_{p,2} (A_V), \dot B^0_{p,2} (A_V) 
 \hookrightarrow L^p (\Omega)$ \,\,\,
 if \,\,\,$ 2 \leq p < \infty$. 
\end{enumerate}
\end{prop}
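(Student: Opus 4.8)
The plan is to derive Proposition~\ref{thm:4} from the classical Littlewood--Paley theory for the self-adjoint operator $A_V$ on $L^2(\Omega)$, combined with the $L^p$-boundedness of the spectral multipliers $\phi_j(\sqrt{A_V})$ and $\psi(A_V)$ stated in Proposition~\ref{prop:Lp-bound}. The two embeddings are formally dual to each other via Proposition~\ref{thm:2}, so it suffices to prove, say, (ii) carefully and then obtain (i) by duality (or, symmetrically, prove (i) directly and dualize). I will describe the direct route for (ii).

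First I would recall the square-function characterization. For $f \in L^2(\Omega)$ the spectral theorem gives the identity $\|f\|_{L^2}^2 \sim \|\psi(A_V) f\|_{L^2}^2 + \sum_{j \in \mathbb N} \|\phi_j(\sqrt{A_V}) f\|_{L^2}^2$ (and the purely homogeneous analogue $\sum_{j \in \mathbb Z}$ on $\mathcal Z_V$), which is exactly the $\ell^2$-$L^2$ statement $B^0_{2,2}(A_V) = L^2(\Omega)$ with equivalent norms. Next, for general $2 \le p < \infty$, the Littlewood--Paley inequality for $A_V$ asserts
\begin{equation}\notag
\Big\| \Big( |\psi(A_V) f|^2 + \sum_{j \in \mathbb N} |\phi_j(\sqrt{A_V}) f|^2 \Big)^{1/2} \Big\|_{L^p(\Omega)} \gtrsim \|f\|_{L^p(\Omega)},
\end{equation}
and likewise with $\sum_{j \in \mathbb Z}$ in the homogeneous case. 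Such an inequality follows from the heat-kernel Gaussian bound for $e^{-tA_V}$ recorded in Appendix~\ref{App:AppendixA}, via the standard machinery (Khintchine's inequality plus a vector-valued Calder\'on--Zygmund / singular-integral argument applied to the randomized sum $\sum_j \pm \phi_j(\sqrt{A_V})$, or equivalently via the boundedness of the square function associated to $A_V$ on spaces of homogeneous type). Granting this, (ii) follows by interchanging the $\ell^2_j$ norm and the $L^p_x$ norm: since $2 \le p$, Minkowski's integral inequality gives $\big\| \{ \|\phi_j(\sqrt{A_V}) f\|_{L^p} \}_j \big\|_{\ell^2} = \big\| \|\{\phi_j(\sqrt{A_V})f\}_j\|_{\ell^2}\big\|_{L^p} \ge \big\| (\sum_j |\phi_j(\sqrt{A_V})f|^2)^{1/2} \big\|_{L^p}$ up to a harmless treatment of the $\psi$-term, and the left side is exactly $\|f\|_{\dot B^0_{p,2}(A_V)}$ (resp.\ $\|f\|_{B^0_{p,2}(A_V)}$). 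This proves $B^0_{p,2}(A_V), \dot B^0_{p,2}(A_V) \hookrightarrow L^p(\Omega)$ for $2 \le p < \infty$.

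For (i), with $1 < p \le 2$, I would run the dual argument. The reverse Littlewood--Paley inequality $\big\| (\sum_j |\phi_j(\sqrt{A_V})f|^2)^{1/2}\big\|_{L^p} \gtrsim \|f\|_{L^p}$ for $1 < p \le 2$ is again standard from the Gaussian bounds; combined with Minkowski's inequality in the direction $\ell^2 \hookrightarrow L^p$-valued (valid since now $p \le 2$ so $\|\cdot\|_{L^p(\ell^2)} \le$ is replaced by using $\|g\|_{L^p(\ell^2)} \le \|g\|_{\ell^2(L^p)}$ when $p\le 2$), one gets $\|f\|_{\dot B^0_{p,2}(A_V)} = \big\|\{\|\phi_j f\|_{L^p}\}_j\big\|_{\ell^2} \le \big\|\big(\sum_j |\phi_j f|^2\big)^{1/2}\big\|_{L^p} \lesssim \|f\|_{L^p}$, hence $L^p(\Omega) \hookrightarrow \dot B^0_{p,2}(A_V)$ and similarly into $B^0_{p,2}(A_V)$ (adding the $\psi(A_V)$ term, which is bounded on $L^p$). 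Alternatively one simply dualizes (ii) using Proposition~\ref{thm:2}: the dual of $B^0_{p',2}(A_V)$ is $B^0_{p,2}(A_V)$ and the dual of $L^{p'}(\Omega)$ is $L^p(\Omega)$, so the embedding $B^0_{p',2}(A_V) \hookrightarrow L^{p'}(\Omega)$ for $2 \le p' < \infty$ transposes to $L^p(\Omega) \hookrightarrow B^0_{p,2}(A_V)$ for $1 < p \le 2$, and likewise in the homogeneous setting with $\mathcal Z_V'$-duality.

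The main obstacle is establishing the Littlewood--Paley inequalities for $A_V$ themselves on an arbitrary open set $\Omega$ with no boundary regularity: one cannot invoke Fourier analysis and must instead use that $(\Omega, |\cdot|, dx)$ together with the semigroup $e^{-tA_V}$ fits the framework of a space of homogeneous type with Gaussian heat-kernel bounds, so that the general theory (e.g.\ the vector-valued singular integral estimates, or the $H^\infty$-functional calculus of $A_V$ on $L^p$) applies. This requires care that the kernel bounds from Appendix~\ref{App:AppendixA} — including the correction coming from the Kato-class negative part $V_-$ and, in the homogeneous case, the smallness hypothesis \eqref{ass:1} ensuring $0$ is not an eigenvalue — are strong enough to feed into the Littlewood--Paley machinery uniformly in $j$; the uniform $L^1$-bound \eqref{1126-1n} and more generally the uniform $L^p$-boundedness of $\phi_j(\sqrt{A_V})$ from Proposition~\ref{prop:Lp-bound} are exactly what make the randomized-sum argument go through. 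Once these tools are in place the rest is the routine Minkowski-inequality bookkeeping sketched above.
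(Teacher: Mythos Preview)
Your proposal is correct and identifies the same circle of ideas the paper uses: Khintchine's inequality applied to a randomized dyadic sum, Gaussian heat-kernel bounds for $e^{-tA_V}$ as the analytic input, a singular-integral/Calder\'on--Zygmund argument to bound the randomized sum in $L^p$, and duality (Proposition~\ref{thm:2}) to pass between (i) and (ii).

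Two points of comparison are worth noting. First, the paper reverses your order: it proves (i) directly for $1<p\le 2$ and then obtains (ii) for $2\le p<\infty$ by duality, rather than the other way around. Second, and more substantively, the paper does not work with the square function built from $\phi_j(\sqrt{A_V})$ directly (for which kernel bounds are not immediately available), nor does it invoke the abstract $H^\infty$-calculus/space-of-homogeneous-type machinery you allude to. Instead it takes a more concrete route: Lemma~\ref{lem:62} uses $\phi_j(\sqrt{A_V}) = \bigl(e^{2^{-2j}A_V}\phi_j(\sqrt{A_V})\bigr) e^{-2^{-2j}A_V}$ and Proposition~\ref{prop:Lp-bound} to reduce the Besov norm to an $\ell^2$ sum of heat-semigroup pieces $\|e^{-2^{-2j}A_V}f\|_{L^p}$; then the pointwise Gaussian bound \eqref{1117-5} is used to dominate the randomized sum $\sum_j r_j(t)e^{-2^{-2j}A_V}f$ by a convolution on $\mathbb{R}^n$ (after zero-extension of $f$) with the kernel $\sum_j G_{2^{-2j}}$, which is checked to satisfy the classical Calder\'on--Zygmund conditions on $\mathbb{R}^n$. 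Your abstract route would also work, but the paper's reduction to Euclidean singular integrals is more elementary and avoids having to verify any intrinsic structure on $\Omega$. (A small slip in your write-up: the equality you wrote between the $\ell^2(L^p)$ and $L^p(\ell^2)$ norms should be an inequality, and the ``reverse Littlewood--Paley'' inequality you state with $\gtrsim$ should read $\lesssim$; your final chains of inequalities are nonetheless in the correct direction.)
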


As was stated in \S 1, the classical 
homogeneous Besov spaces are considered as subspaces 
of quotient space $\mathcal S' (\mathbb R^n) / \mathcal P (\mathbb R^n) $. 
The following proposition states that the homogeneous Besov spaces with some indices $s,p,q$ 
are characterized by subspaces of $\mathcal X'_V(\Omega)$ which is not a quotient space. 
Such characterization is known in the case of $\Omega = \mathbb R^n$ 
(see, e.g. \cite{KoYa-1994}). 
\begin{prop}\label{thm:7}
Assume that $V$ satisfies \eqref{1104-1} and \eqref{ass:1}. 
Let $s \in \mathbb R$ and $1 \leq p,q \leq \infty$.
If either $ s < n/p$ or $(s,q) = (n/p , 1)$, 
then the homogeneous Besov spaces $\dot B^s_{p,q} (A_V)$ are regarded as 
subspaces of $\mathcal X'_V (\Omega)$ according to the following isomorphism{\rm :} 
\begin{equation}\notag 
\dot B^s_{p,q} (A_V)
\simeq \Big\{ f \in \mathcal X'_V(\Omega) 
     \, \Big| \, 
     \| f \|_{\dot B^s_{p,q}(A_V)} < \infty , \,
     f = \sum _{ j \in \mathbb Z} 
        \phi_j (\sqrt{A_V} \,) f 
        \text{ in } \mathcal X'_V(\Omega)
   \Big\} . 
\end{equation}
\end{prop}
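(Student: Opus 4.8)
The plan is to realize the isomorphism by the natural map $\iota$ that sends a class in $\dot B^s_{p,q}(A_V) \subset \mathcal Z'_V(\Omega)$ to its restriction as a functional on $\mathcal X_V(\Omega)$, noting that $\mathcal X_V(\Omega) \hookrightarrow \mathcal Z_V(\Omega)$ is continuous so this restriction makes sense. First I would observe that for $f \in \dot B^s_{p,q}(A_V)$ the series $\sum_{j \in \mathbb Z} \phi_j(\sqrt{A_V}) f$ converges in $\mathcal X'_V(\Omega)$ precisely under the stated index condition: this is where the hypothesis $s < n/p$ or $(s,q) = (n/p,1)$ enters. Indeed, pairing $\phi_j(\sqrt{A_V}) f$ against $g \in \mathcal X_V(\Omega)$ and using the duality together with the $L^p$--$L^{p'}$ estimates and the Bernstein-type inequalities (available from Proposition~\ref{thm:3} and the uniform $L^r$-boundedness of the spectral multipliers, Proposition~\ref{prop:Lp-bound}), the low-frequency tail $\sum_{j \leq 0}$ is controlled by $\sum_{j\leq 0} 2^{sj} \|\phi_j(\sqrt{A_V})f\|_{L^p} \cdot \|\phi_j(\sqrt{A_V})g\|_{L^{p'}}$; the factor $\|\phi_j(\sqrt{A_V})g\|_{L^{p'}}$ gains a factor $2^{jn/p}$ by Bernstein (embedding $L^1 \to L^{p'}$ at frequency $2^j$), and combined with the $\ell^q$-boundedness of $\{2^{sj}\|\phi_j(\sqrt{A_V})f\|_{L^p}\}_{j \leq 0}$ one gets an absolutely convergent (or, in the endpoint case, Abel-summable) series. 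The high-frequency tail $\sum_{j > 0}$ converges in $\mathcal X'_V(\Omega)$ for free because test functions in $\mathcal X_V(\Omega)$ are rapidly decaying in $j$ at high frequency, exactly as in the proof of Theorem~\ref{thm:1}(i).

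Next I would show $\iota$ is injective on $\dot B^s_{p,q}(A_V)$. If $\iota(f) = 0$ in $\mathcal X'_V(\Omega)$, then ${}_{\mathcal X'_V}\langle f, g\rangle_{\mathcal X_V} = 0$ for all $g \in \mathcal X_V(\Omega)$, hence in particular $\langle \phi_j(\sqrt{A_V}) f, g\rangle = \langle f, \phi_j(\sqrt{A_V}) g\rangle = 0$ for all $j$ and all $g \in \mathcal X_V(\Omega)$, because $\phi_j(\sqrt{A_V})$ maps $\mathcal X_V(\Omega)$ into itself (indeed into $\mathcal Z_V(\Omega)$). But $\phi_j(\sqrt{A_V}) f \in L^p(\Omega)$, and $\mathcal X_V(\Omega)$ separates points of $L^p(\Omega)$ (by Lemma~\ref{cor:1} and density arguments, or directly since $\mathcal X_V(\Omega)$ contains sufficiently many functions), so $\phi_j(\sqrt{A_V}) f = 0$ for every $j \in \mathbb Z$; since $f = \sum_j \phi_j(\sqrt{A_V}) f$ in $\mathcal Z'_V(\Omega)$ by Theorem~\ref{thm:1}(ii)(a) (the homogeneous reproducing formula), we conclude $f = 0$. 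Thus $\iota$ identifies $\dot B^s_{p,q}(A_V)$ with a subspace of $\mathcal X'_V(\Omega)$, and this subspace is contained in the right-hand side because the reproducing identity $f = \sum_j \phi_j(\sqrt{A_V}) f$ holds in $\mathcal Z'_V(\Omega)$, hence a fortiori in $\mathcal X'_V(\Omega)$ once we know the series converges there.

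For the reverse inclusion, suppose $f \in \mathcal X'_V(\Omega)$ satisfies $\|f\|_{\dot B^s_{p,q}(A_V)} < \infty$ and $f = \sum_{j \in \mathbb Z} \phi_j(\sqrt{A_V}) f$ in $\mathcal X'_V(\Omega)$. I would then define $\tilde f \in \mathcal Z'_V(\Omega)$ by $\tilde f := \sum_j \phi_j(\sqrt{A_V}) f$, interpreted now as a functional on $\mathcal Z_V(\Omega)$; the partial sums converge in $\mathcal Z'_V(\Omega)$ by the same Bernstein/duality estimate as above (the extra decay of $q_{V,M}$ at low frequency built into $\mathcal Z_V(\Omega)$ makes the low-frequency pairing even better behaved than against $\mathcal X_V$). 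By construction $\tilde f \in \dot B^s_{p,q}(A_V)$ with $\|\tilde f\|_{\dot B^s_{p,q}(A_V)} = \|f\|_{\dot B^s_{p,q}(A_V)}$, and $\iota(\tilde f) = f$ because for $g \in \mathcal X_V(\Omega)$ both sides equal $\sum_j \langle \phi_j(\sqrt{A_V}) f, g\rangle$. This establishes that $\iota$ is a bijection onto the right-hand side, and it is isometric since both norms are literally the same expression in terms of $\{\phi_j(\sqrt{A_V})f\}_{j \in \mathbb Z}$; continuity in both directions is then immediate, giving the topological isomorphism.

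The main obstacle is the convergence of the low-frequency series $\sum_{j \leq 0}\phi_j(\sqrt{A_V}) f$ in $\mathcal X'_V(\Omega)$ under the sharp index restriction; this requires a careful Bernstein-type estimate $\|\phi_j(\sqrt{A_V}) g\|_{L^{p'}(\Omega)} \lesssim 2^{jn/p}\|g\|_{L^1(\Omega)}$ valid uniformly for $j \leq 0$ and $g \in \mathcal X_V(\Omega)$ (deduced from the kernel estimate for $e^{-tA_V}$ in appendix~A together with the support properties of $\phi_j$), and in the borderline case $(s,q) = (n/p,1)$ the series is only conditionally summable so one must argue via the $\ell^1$-summability of $\{2^{jn/p}\|\phi_j(\sqrt{A_V})f\|_{L^p}\}_{j\leq 0}$ directly rather than term-by-term absolute estimates. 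Once this estimate is in hand, everything else is bookkeeping with the duality pairings and the reproducing formulas already established in Theorem~\ref{thm:1}.
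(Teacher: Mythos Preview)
Your proposal contains a genuine error at the very first step: you have the inclusion between the test function spaces backwards. By Definition~\ref{def:1}, $\mathcal Z_V(\Omega)$ is a \emph{subspace} of $\mathcal X_V(\Omega)$ carrying the stronger seminorms $q_{V,M}\geq p_{V,M}$, so the continuous inclusion is $\mathcal Z_V(\Omega)\hookrightarrow \mathcal X_V(\Omega)$, and hence on duals $\mathcal X'_V(\Omega)\hookrightarrow \mathcal Z'_V(\Omega)$ (cf.\ the proof of Lemma~\ref{cor:1}). A functional on the smaller space $\mathcal Z_V(\Omega)$ therefore cannot be ``restricted'' to the larger space $\mathcal X_V(\Omega)$; the map $\iota$ you describe simply does not exist. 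This is not a technicality but the whole point of the proposition: $\mathcal X'_V(\Omega)$ is \emph{smaller} than $\mathcal Z'_V(\Omega)$, and one must manufacture an \emph{extension} of $f$ from $\mathcal Z_V$ to $\mathcal X_V$, which is only possible under the index hypothesis. For the same reason your ``reverse inclusion'' is overcomplicated: once the direction is corrected, any $f\in\mathcal X'_V(\Omega)$ already lies in $\mathcal Z'_V(\Omega)$ by restriction, so $\dot X^s_{p,q}(A_V)\subset \dot B^s_{p,q}(A_V)$ is immediate, as the paper notes.

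Once the framing is fixed, your analytic core is correct and coincides with the paper's argument. The extension is given by the series $\sum_{j\in\mathbb Z}\phi_j(\sqrt{A_V})f$, and the issue is its convergence in $\mathcal X'_V(\Omega)$. The paper handles the low-frequency part by the Bernstein estimate $\|\phi_j(\sqrt{A_V})f\|_{L^\infty}\leq C\,2^{jn/p}\|\phi_j(\sqrt{A_V})f\|_{L^p}$ (from \eqref{homog}), obtaining convergence of $\sum_{j\leq 0}\phi_j(\sqrt{A_V})f$ in $L^\infty(\Omega)\subset\mathcal X'_V(\Omega)$; your dual version, bounding $\|\Phi_j(\sqrt{A_V})g\|_{L^{p'}}$ by $C\,2^{jn/p}\|g\|_{L^1}$, is equivalent. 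One further correction: in the endpoint case $(s,q)=(n/p,1)$ the low-frequency series is still \emph{absolutely} convergent, since $\sum_{j\leq 0}2^{jn/p}\|\phi_j(\sqrt{A_V})f\|_{L^p}$ is part of the $\dot B^{n/p}_{p,1}$ norm; your remarks about conditional convergence and Abel summability are unnecessary and misleading.
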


We conclude this section by stating a 
result on the equivalence relation among 
the Besov spaces by $A_0$ and $A_V$-regularity 
with $V \in L^{\frac{n}{2},\infty} (\Omega)$. 
For the definition of the Lorentz space 
$L^{\frac{n}{2},\infty} (\Omega)$, see 
\S \ref{sec:thm:6}.

\begin{prop}\label{thm:6} 
Let $ n, s,p,q$ be such that 
$$
n\geq 2, \quad 
1 \leq p,q \leq \infty, \quad 
- \min\Big\{ 2, n\Big( 1- \frac{1}{p} \Big) \Big\} 
< s < 
\min\Big\{ \frac{n}{p}, 2 \Big\} . 
$$
In addition to the same assumption on $V$ as in Theorem~\ref{thm:1}, 
we further assume that 
\begin{equation}\label{ass:1_0}
\begin{cases} 
 V \in L^1 (\Omega)
& \quad \text{if } n = 2, 
\\
V \in L^{\frac{n}{2},\infty} (\Omega)
\,\,
& \quad \text{if } n \geq 3. 
\end{cases}
\end{equation}
Then 
\begin{gather}\notag 
B^s_{p,q} (A_V) \simeq B^s_{p,q} (A_0),
\\ \notag 
\dot B^s_{p,q} (A_V) \simeq \dot B^s_{p,q} (A_0) . 
\end{gather}
\end{prop}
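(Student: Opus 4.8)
The plan is to establish the two isomorphisms by comparing, frequency-block by frequency-block, the quantities $\|\phi_j(\sqrt{A_V})f\|_{L^p(\Omega)}$ and $\|\phi_j(\sqrt{A_0})f\|_{L^p(\Omega)}$. The natural tool is a ``change of spectral localization'' identity: for each $j$, write $\phi_j(\sqrt{A_V}) = \sum_{|k-j|\le 1}\phi_j(\sqrt{A_V})\,\widetilde\phi_k(\sqrt{A_0})$ modulo a tail, where $\widetilde\phi_k$ is a fattened version of $\phi_k$ with $\widetilde\phi_k\phi_k=\phi_k$, and conversely with the roles of $A_V$ and $A_0$ exchanged. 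Thus everything reduces to proving that the operators
\[
\phi_j(\sqrt{A_V})\,\widetilde\phi_k(\sqrt{A_0}) \quad\text{and}\quad \phi_j(\sqrt{A_0})\,\widetilde\phi_k(\sqrt{A_V})
\]
are bounded on $L^p(\Omega)$ with operator norms that decay in $|j-k|$ fast enough to be summed, uniformly in $j,k\in\mathbb Z$ (for the homogeneous case) and in $j,k\in\mathbb N$ together with the companion low-frequency operators built from $\psi$ (for the inhomogeneous case). Granting such bounds, one gets $\|\phi_j(\sqrt{A_V})f\|_{L^p}\lesssim \sum_k c_{|j-k|}\|\phi_k(\sqrt{A_0})f\|_{L^p}$ with $(c_m)\in\ell^1$, and a discrete Young/Schur argument on $\ell^q_s$ sequences yields $\|f\|_{\dot B^s_{p,q}(A_V)}\lesssim\|f\|_{\dot B^s_{p,q}(A_0)}$ and the reverse inequality; the same bookkeeping handles the inhomogeneous norm. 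The compatibility of the underlying distribution spaces — that $\mathcal X_V(\Omega)=\mathcal X_0(\Omega)$ and $\mathcal Z_V(\Omega)=\mathcal Z_0(\Omega)$ as sets with equivalent topologies, so that the two scales of Besov spaces sit inside the same ambient dual space — should be recorded first; it follows from the same off-diagonal estimates applied to the defining semi-norms $p_{V,M},q_{V,M}$.

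The heart of the matter is therefore the off-diagonal operator bound. I would obtain it from the second-resolvent (Duhamel) formula for the heat semigroups, $e^{-tA_V}-e^{-tA_0} = -\int_0^t e^{-(t-\tau)A_V}\,V\,e^{-\tau A_0}\,d\tau$, together with the Gaussian-type kernel bounds for $e^{-tA_0}$ and $e^{-tA_V}$ recorded in Appendix A, and the fact that multiplication by $V\in L^{n/2,\infty}(\Omega)$ (or $L^1(\Omega)$ when $n=2$) maps appropriate $L^r$–$L^{r'}$ pairs into each other by the Hölder inequality in Lorentz spaces. Writing $\phi_j(\sqrt{A_V})$ via the Mellin/Laplace transform of an auxiliary profile as a superposition $\int_0^\infty m_j(t)\,e^{-tA_V}\,dt$ with $m_j$ supported essentially at scale $t\sim 2^{-2j}$, and similarly for $\widetilde\phi_k(\sqrt{A_0})$, the difference $\phi_j(\sqrt{A_V})-\phi_j(\sqrt{A_0})$ becomes a double integral against the Duhamel term, and the kernel bounds give a gain of a power of $2^{-2\min(j,k)}\cdot(\text{a negative power of the larger scale})$ — concretely an exponent governed by $s<\min\{n/p,2\}$ from above and $s>-\min\{2,n(1-1/p)\}$ from below, which is exactly why the stated range of $s$ appears. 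The condition $s<n/p$ (and, at the endpoint, $(s,q)=(n/p,1)$) also lets us invoke Proposition~\ref{thm:7} so that the homogeneous spaces are honest subspaces of $\mathcal X'_V(\Omega)=\mathcal X'_0(\Omega)$ rather than quotient spaces, which is what makes the identity map between them well defined.

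Concretely, the steps I would carry out, in order, are: (1) prove the multiplier-type lemma that $V\,e^{-\tau A_0}$ and its companions satisfy $L^p\to L^p$ bounds of size $O(\tau^{-1+\delta})$ for a suitable $\delta>0$ depending on $p$ and on the Lorentz exponent of $V$, using the kernel bounds from Appendix~A and Hölder in Lorentz spaces; (2) assemble, via Duhamel and the heat-superposition representation of $\phi_j$, the off-diagonal estimate $\|\phi_j(\sqrt{A_V})\widetilde\phi_k(\sqrt{A_0})\|_{L^p\to L^p}\le C\,2^{-\epsilon|j-k|}2^{-\theta\min(j,k)}$ with $\epsilon>0$ and with $\theta$ in the admissible interval dictated by the hypotheses on $s$, plus the symmetric estimate with $A_V,A_0$ swapped; (3) deduce $\mathcal X_V(\Omega)=\mathcal X_0(\Omega)$, $\mathcal Z_V(\Omega)=\mathcal Z_0(\Omega)$ and hence equality of the dual spaces; (4) run the Schur/Young summation to get the norm equivalences $\|f\|_{B^s_{p,q}(A_V)}\simeq\|f\|_{B^s_{p,q}(A_0)}$ and $\|f\|_{\dot B^s_{p,q}(A_V)}\simeq\|f\|_{\dot B^s_{p,q}(A_0)}$ on the common ambient space; (5) for the homogeneous statement, use Proposition~\ref{thm:7} to identify both spaces as genuine subspaces of $\mathcal X'_V(\Omega)$ so that the equivalence of norms upgrades to an isomorphism, and note that the scaling bookkeeping in steps (1)–(2) is precisely where the two-sided restriction on $s$ is consumed. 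The main obstacle I expect is step (2): getting the off-diagonal decay in $|j-k|$ simultaneously with the correct positive power of the small scale, uniformly down to $j,k\to-\infty$ in the homogeneous case, because there the singularity of $V$ at the level of $L^{n/2,\infty}$ is scale-critical and one must extract the decay from the mismatch of the dyadic profiles rather than from any smallness of $V$; handling the borderline Lorentz space (as opposed to $L^{n/2}$) in the Hölder step, and the endpoint $(s,q)=(n/p,1)$, are the delicate technical points within that obstacle.
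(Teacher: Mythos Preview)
Your overall architecture---off-diagonal bounds for $\phi_j(\sqrt{A_V})\Phi_k(\sqrt{A_0})$ (and with the roles swapped) followed by a Schur/Young summation in $\ell^q_s$---is exactly the paper's strategy. Two points, however, need correction.

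\textbf{(a) The identification of test-function spaces is wrong.} Your step (3), asserting $\mathcal X_V(\Omega)=\mathcal X_0(\Omega)$ and $\mathcal Z_V(\Omega)=\mathcal Z_0(\Omega)$, does not follow from the off-diagonal estimates and is in general false for $V\in L^{n/2,\infty}$. The semi-norms $p_{V,M},q_{V,M}$ demand decay $2^{-M|j|}$ for \emph{every} $M$, while the off-diagonal rate you can extract here is at most $\alpha<2$ (one application of $A_V=A_0+V$; iterating would require differentiating $V$). Concretely, for $f\in\mathcal X_0(\Omega)$ and $M\ge 2$ the sum $\sum_{k<j}2^{Mj}\,2^{-\alpha(j-k)}\,2^{-Mk}$ diverges, so you cannot control $p_{V,M}(f)$. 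The paper never claims this equality; instead it identifies $f\in\dot B^s_{p,q}(A_0)$ with an element of $\mathcal Z'_V(\Omega)$ directly, using that each piece $\phi_k(\sqrt{A_0})f\in L^p(\Omega)\subset\mathcal Z'_V(\Omega)$ (Lemma~\ref{cor:1}) and that the double sum $\sum_{j,k}\langle\phi_k(\sqrt{A_0})f,\phi_j(\sqrt{A_V})g\rangle$ converges absolutely for every $g\in\mathcal Z_V(\Omega)$. No comparison of the test-function spaces themselves is needed.

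\textbf{(b) The off-diagonal estimate: simpler mechanism, and no $\min(j,k)$ factor.} The paper does not go through Duhamel and a heat-kernel superposition of $\phi_j$. It uses the one-line algebraic identity
\[
\phi_j(\sqrt{A_V})\,\Phi_k(\sqrt{A_0})
=\bigl[\phi_j(\sqrt{A_V})A_V^{-1}\bigr]\bigl[A_0\Phi_k(\sqrt{A_0})+V\,\Phi_k(\sqrt{A_0})\bigr],
\]
the first bracket contributing $2^{-2j}$ by Lemma~\ref{lem:calc}, the $A_0$-term contributing $2^{2k}$, and the $V$-term handled by H\"older in Lorentz spaces (Lemma~\ref{lem:91}) to give another $2^{2k}$. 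This yields $\|\phi_j(\sqrt{A_V})\Phi_k(\sqrt{A_0})\|_{L^p\to L^p}\le C\,2^{-2(j-k)}$ for $p<n/2$, and after interpolation with the uniform bound, $\le C\,2^{-\alpha(j-k)}$ for any $\alpha<\min\{2,n/p\}$; the dual estimate gives the opposite direction with $\alpha<\min\{2,n(1-1/p)\}$. Note there is \emph{no} factor $2^{-\theta\min(j,k)}$: the bound is scale-invariant in $j,k$ separately, which is essential for the homogeneous case as $j,k\to-\infty$. Your stated form of the bound is therefore incorrect, and while a Duhamel route can in principle be made to work, it is substantially heavier than this direct argument and your description of it is too vague at the point you yourself flag as the main obstacle.
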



Let us give some remarks on 
Proposition~\ref{thm:6}.

\begin{itemize}
\item[\rm (i)] 
Proposition~\ref{thm:6} implies not only the equivalence of norms, 
but also that of the following two approximations of the identity
\begin{gather}\notag 
f = \sum _{ j \in \mathbb Z} \phi_j (\sqrt{A_0}) f  
\quad \text{in } \mathcal Z'_0 (\Omega),
 \qquad 
f = \sum _{ j \in \mathbb Z} \phi_j (\sqrt{A_V}) f  
\quad \text{in } \mathcal Z'_V (\Omega) , 
\end{gather}
for $f$ belonging to the homogeneous Besov spaces. 
Analogous approximations in $\mathcal X'_0(\Omega)$ and $\mathcal X'_V(\Omega)$
are also equivalent for the inhomogeneous Besov spaces. 

\item[\rm (ii)] 
By considering the Lorentz spaces, 
it is possible to treat the potential $V$ like 
$$
V(x) = c |x|^{-2} , \qquad c > 0, 
$$
which, in fact, $V \in L^{\frac{n}{2} , \infty} (\Omega)$.

\item[\rm (iii)] 
If $V$ is smooth more and more, then, $s$ can be taken bigger and bigger
so that the isomorphism holds. 
For instance, this comes from the following identity:
$$
(-\Delta + V) ^2 f 
= (-\Delta)^2 f + (-\Delta) (V f) + V (-\Delta) f + V^2 f
$$
when we consider the case $s = 4$. 
In fact, the term $(-\Delta) (V f)$ requires the differentiability of $V$.

\end{itemize}


\section{Key lemmas} 

In this section we introduce 
some tools and prove fundamental properties of 
$\mathcal X_V (\Omega)$ and $\mathcal Z _V (\Omega)$, which are important in later  arguments.
Here and below, we denote by $\| \cdot \|_{L^p}$ the norm of $L^p (\Omega)$ 
and $\| \cdot \|_{L^p (\mathbb R^n)}$ the norm of $L^p (\mathbb R^n)$.
\\

We start with the functional calculus; 
$L^p $-boundedness of operators $\psi (A_V)$ and $\phi_j (\sqrt {A_V})$  
for $1 \leq p \leq \infty$. 
In the previous work~\cite{IMT-preprint} 
we have established such kind of estimates for some potential $V$ when $ n \geq 3$. 
 We improve
them by some slight modifications, 
 and obtain $L^p$-estimates under more general
conditions on our potential $V$ in all space 
dimensions (see Proposition \ref{prop:Lp-bound} 
in appendix~\ref{App:AppendixA}). \\

Based on Proposition \ref{prop:Lp-bound}, 
we have the following useful lemma. 

\begin{lem}\label{lem:calc} 
Let $1 \leq r \leq p \leq \infty$. Assume that 
the measurable potential $V $ satisfies \eqref{1104-1}. 
Then 
we have the following assertions{\rm :} 
\begin{enumerate}
\item[(i)] 
For any $\phi \in C_0^\infty (\mathbb R)$ 
and $m\in \mathbb{N}\cup \{0\}$ 
there exists a constant $C > 0$ such that 
\begin{equation}\label{orig1}
\| A_V^m\phi (A_V) f \|_{L^p} 
\leq C \| f \|_{L^r}
\end{equation}
for all $f \in L^r (\Omega)$. 

\item[(ii)]
For any $\phi \in C_0^\infty ((0,\infty))$ and 
$\alpha \in \mathbb R$ there exists 
a constant $C>0$ such that 
\begin{gather}\label{inhom}
\| A_V^{ \alpha}\phi ( 2^{-j}\sqrt{A_V} ) f \| _{L^p} 
\leq C 2^{n(\frac{1}{r} - \frac{1}{p})j 
+ 2{ \alpha}j} \| f \|_{L^r} 
\end{gather}
for all $j \in \mathbb N$ and $f \in L^r (\Omega)$. 

\item[(iii)]
Assume further that $V$ satisfies \eqref{ass:1}.  
Then for any $\phi \in C_0^\infty ((0,\infty))$ and $\alpha \in \mathbb R$ there exists 
a constant $C>0$ such that 
\begin{equation}\label{homog}
\| A_V^{ \alpha} \phi ( 2^{-j}\sqrt{A_V} ) f \| _{L^p} 
\leq C 2^{n(\frac{1}{r} - \frac{1}{p})j 
+ 2{ \alpha}j} \| f \|_{L^r} 
\end{equation}
for all $j \in \mathbb Z$ and $f \in L^r (\Omega)$. 
\end{enumerate}

\end{lem}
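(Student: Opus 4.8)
\textbf{Proof plan for Lemma~\ref{lem:calc}.}
The plan is to reduce all three assertions to the uniform $L^p$-boundedness statements of Proposition~\ref{prop:Lp-bound} (and the pointwise heat-kernel bounds recalled in appendix~A), exploiting that the functional calculus for $A_V$ behaves well under composition and scaling. For part~(i), I would write $A_V^m\phi(A_V) = \widetilde\phi(A_V)$ with $\widetilde\phi(\lambda) := \lambda^m\phi(\lambda) \in C_0^\infty(\mathbb R)$, so the operator has the same form as those treated in Proposition~\ref{prop:Lp-bound}; this already gives the $L^p\to L^p$ bound. To upgrade it to $L^r\to L^p$ for $r \le p$, I would insert a heat semigroup factor: choose $\chi \in C_0^\infty(\mathbb R)$ with $\chi \equiv 1$ on $\mathrm{supp}\,\widetilde\phi$, so $\widetilde\phi(A_V) = \widetilde\phi(A_V)\,e^{A_V}\cdot e^{-A_V} = \big(\widetilde\phi(A_V) e^{A_V}\chi(A_V)\big)\, e^{-A_V}$ up to harmless lower-order manip­ulation, and then use that $e^{-A_V}: L^r \to L^p$ is bounded (from the Gaussian-type kernel estimate for $e^{-tA_V}$ at $t=1$, established in appendix~A) together with the $L^p$-boundedness of $\psi(A_V)$-type operators. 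This chains to the desired $L^r \to L^p$ estimate.

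For part~(ii), the key is a parabolic scaling argument. Writing $\phi \in C_0^\infty((0,\infty))$, the operator $A_V^\alpha\phi(2^{-j}\sqrt{A_V})$ equals $2^{2\alpha j}\,\psi_j(A_V)$ where $\psi_j(\lambda) := (2^{-2j}\lambda)^\alpha \phi(2^{-j}\sqrt\lambda)$ and $\psi := \psi_0$ is a fixed function in $C_0^\infty((0,\infty))$ with $\psi_j(\lambda) = \psi(2^{-2j}\lambda)$. Thus it suffices to prove $\|\psi(2^{-2j}A_V) f\|_{L^p} \le C 2^{n(1/r-1/p)j}\|f\|_{L^r}$. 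I would obtain this by interpolating two endpoint bounds: the $L^p \to L^p$ bound (uniform in $j$, from Proposition~\ref{prop:Lp-bound} applied to the scaled symbol — note scale invariance of the hypotheses), and an $L^1 \to L^\infty$ bound with the constant $2^{nj}$, which follows from the pointwise kernel estimate for $\psi(2^{-2j}A_V)$; the latter in turn comes from writing $\psi$ as (a rapidly convergent superposition of) heat semigroup operators $e^{-t 2^{-2j}A_V}$ and using the Gaussian bound $|e^{-sA_V}(x,y)| \lesssim s^{-n/2}e^{-c|x-y|^2/s}$, which gives kernel size $\lesssim (2^{-2j})^{-n/2} = 2^{nj}$. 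A Riesz--Thorin interpolation between $(p,p)$ and $(1,\infty)$ then produces exactly the exponent $n(1/r-1/p)j$. Part~(iii) is identical in structure; the only change is that assumption~\eqref{ass:1} guarantees $A_V \ge 0$ with no zero eigenvalue, which is precisely what is needed to make the $j \to -\infty$ (low-frequency) range of the scaling argument legitimate — there the symbol $\psi(2^{-2j}\lambda)$ is supported near $\lambda \sim 2^{2j} \to 0$, and one needs the heat kernel bounds and $L^p$-boundedness to hold uniformly down to the bottom of the spectrum, which Proposition~\ref{prop:Lp-bound} under~\eqref{ass:1} supplies.

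The main obstacle I anticipate is verifying that the hypotheses of Proposition~\ref{prop:Lp-bound} really are scale-invariant, i.e.\ that the $L^p$-bounds for $\psi(2^{-2j}A_V)$ are \emph{uniform} in $j$ rather than merely finite for each $j$; the heat-kernel route sidesteps part of this, but one still must check the relevant Kato-class quantities (and, for~(iii), the constant in~\eqref{ass:1}) scale correctly, which is where the dimensional normalizations in the definition of $K_n(\Omega)$ and in~\eqref{ass:1} get used. The rest is bookkeeping: expanding $\phi$ in terms of $e^{-tA_V}$, tracking the $t$-integral, and assembling the interpolation. I would carry out the steps in the order (i) $\to$ (ii) $\to$ (iii), reusing the heat-kernel lemma of appendix~A at each stage.
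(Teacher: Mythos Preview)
Your approach to part~(i) is essentially the paper's: the paper also factors through the heat semigroup, writing $A_V^m\phi(A_V) = \{A_V^m e^{A_V}\phi(A_V)\}e^{-A_V}$, using Proposition~\ref{prop:Lp-bound} on the bracketed $C_0^\infty$-calculus piece and the Gaussian kernel bound for $e^{-A_V}$ to gain $L^r\to L^p$. (Your extra cutoff $\chi$ is harmless but unnecessary, since $\lambda^m e^\lambda \phi(\lambda)$ is already compactly supported.)

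For parts~(ii) and~(iii) there is a gap in the interpolation step. A single Riesz--Thorin step between the $L^p\to L^p$ bound and the $L^1\to L^\infty$ bound produces only the family $L^s\to L^t$ with $1/s = \theta + (1-\theta)/p$, $1/t = (1-\theta)/p$, and operator norm $\le C\,2^{n\theta j}$; forcing $t=p$ gives $\theta=0$, i.e.\ just the diagonal case. The pair $(r,p)$ with $r<p$ is not on that interpolation segment. One can repair this with a two-step interpolation (first reach $L^r\to L^\infty$ from $L^1\to L^\infty$ and $L^\infty\to L^\infty$, then interpolate with $L^r\to L^r$), but the paper avoids interpolation entirely and proceeds more directly: it factors
\[
A_V^\alpha \phi(2^{-j}\sqrt{A_V}) = 2^{2\alpha j}\,\big\{(2^{-2j}A_V)^\alpha e^{2^{-2j}A_V}\phi(2^{-j}\sqrt{A_V})\big\}\,e^{-2^{-2j}A_V},
\]
applies Proposition~\ref{prop:Lp-bound} to the braced operator (whose symbol is a fixed $C_0^\infty((0,\infty))$ function evaluated at $2^{-2j}A_V$, so the uniform-in-$j$ $L^p\to L^p$ bound is immediate), and then uses the pointwise estimate $|e^{-2^{-2j}A_V}(x,y)| \le G_{2^{-2j}}(x-y)$ together with Young's inequality to get $\|e^{-2^{-2j}A_V}f\|_{L^p} \le \|G_{2^{-2j}}\|_{L^{r_0}}\|f\|_{L^r} = C\,2^{n(1/r-1/p)j}\|f\|_{L^r}$. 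This one-shot factorization is cleaner than any interpolation scheme and is what you should do.

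Finally, your anticipated obstacle about ``scale-invariance of the hypotheses of Proposition~\ref{prop:Lp-bound}'' is not really an obstacle: the proposition is already stated with a supremum over $0<\theta\le 1$ (for~(ii)) and $0<\theta<\infty$ (for~(iii)), so the uniformity in $j$ is built in and no separate scaling analysis of the Kato-class quantities is required.
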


\begin{proof}
[\bf Proof]  
Let $m \in \mathbb{N}\cup \{0\}$ and $\alpha \in \mathbb R$. 
To begin with, 
we note that the following inequality 
\begin{equation}\label{0208-1}
\|A^m_V \phi(A_V)g\|_{L^p}
\le C\|g\|_{L^p} 
\end{equation}
holds for any $g \in L^p (\Omega)$. 
In fact, writing 
$$
A_V^m\phi (A_V) = \{A_V^m e^{A_V} \phi (A_V)\}e^{-A_V} ,
$$
and noting 
$$
\lambda ^m e^{t\lambda} \phi (\lambda) \in C_0^\infty (\mathbb R), 
$$
 we conclude from Proposition~\ref{prop:Lp-bound} that 
\eqref{0208-1} holds. 
In a similar way, we get 
\begin{equation}\label{0208-2}
\|A^\alpha_V \phi( 2^{-j} \sqrt{A_V})g\|_{L^p}
\le C 2^{2 \alpha j} 
  \|g\|_{L^p} 
\end{equation}
for any $j \in \mathbb N$ and $g \in L^p (\Omega)$, 
provided that $\phi \in C_0^\infty ((0,\infty))$.

Taking account of these considerations,
we show \eqref{orig1}. 
Let $G_t (x)$ be the function of Gaussian type 
appearing in the pointwise estimate \eqref{1117-4_2} of  kernel of $e^{-tA_V}$
from Lemma \ref{lem:pointwise},  i.e.,
\[
G_t(x)=C t^{-\frac{n}{2}}\exp\left(
-\frac{|x|^2}{Ct}
\right), 
\qquad t > 0 , \quad x \in \mathbb R^n, 
\]
where $C$ is a certain positive constant.
We write 
$$
 \phi (2^{-j}\sqrt{A_V}) f 
 = e^{- 2^{-2j} A_V } \big\{ e^{2^{-2j}A_V}  \phi (2^{-j}\sqrt{A_V}) \big\} f  . 
$$
By using pointwise estimate \eqref{1117-4_2} for $e^{-tA_V}$, we have 
\begin{equation}\label{G-1}
\begin{split}
\big| e^{- 2^{-2j}A_V}  f (x) \big|
\leq 
& 
 \int_{\mathbb R^n}
   G_{2^{-2j}} (x-y) \big| \tilde f(y) \big| \, dy ,
   \quad j \in \mathbb N, \quad   x \in \Omega, 
\end{split}
\end{equation}
where $\tilde f$ is a zero extension of $f$ outside of $\Omega$. 
Let $r_0$ be such that $1/p = 1/r_0 + 1/r-1$. 
Then we conclude from the estimates \eqref{0208-1}, 
\eqref{G-1}
 and Young's inequality that 
\begin{equation}\notag 
\begin{split}
\| A_V^m \phi (A_V) f \|_{L^p}
& = \big\| \big\{ A_V ^m e^{A_V} \phi (A_V) \big\} e^{-A_V} f \big\|_{L^p}
\\
& 
\leq C \| e^{-A_V} f \|_{L^p} 
\\
& 
\leq C \| G_1 * |\tilde f| \|_{L^p (\mathbb R^n)}
\\
& 
\leq C \| G_1 \|_{L^{r_0 } (\mathbb R^n)} \| \tilde f \|_{L^r (\mathbb R^n)}
\\
& \leq C \| f \|_{L^r} . 
\end{split}
\end{equation}
This proves \eqref{orig1}.
 
As to \eqref{inhom}, again by using \eqref{0208-2}, \eqref{G-1} and Young's inequality, 
we get
\begin{equation}\notag 
\begin{split}
\| A_V^{\alpha} \phi (2^{-j} \sqrt{A_V}) f \|_{L^p}
& = 2^{2{ \alpha}j} 
 \big\| \big\{ (2^{-2j}A_V)^{ \alpha} e^{2^{-2j}A_V} \phi (2^{-j} \sqrt{A_V}) \big\} e^{-2^{-2j}A_V} f \big\|_{L^p}
\\
& 
\leq C 2^{2\alpha j} \| e^{-2^{-2j}A_V} f \|_{L^p} 
\\
& 
\leq C 2^{2 \alpha j} \| G_{2^{-2j}} * |\tilde f| \|_{L^p (\mathbb R^n)}
\\
& 
\leq C 2^{2 \alpha j}\| G_{2^{-2j}} \|_{L^{r_0 } (\mathbb R^n)} \| \tilde f \|_{L^r (\mathbb R^n)}
\\
& \leq C 2^{2 \alpha j} 2^{n(\frac{1}{r}-\frac{1}{p})j}\| f \|_{L^r}
\end{split}
\end{equation}
for any $j \in \mathbb N$, which proves \eqref{inhom}. 
The estimate \eqref{homog} is also proved in the analogous way to the above argument
by applying 
\eqref{bd:hom} in Proposition~\ref{prop:Lp-bound} and 
\eqref{1117-5} in Lemma~\ref{lem:pointwise} instead of 
\eqref{bd:inh} in Proposition~\ref{prop:Lp-bound} and 
\eqref{1117-4_2} in Lemma~\ref{lem:pointwise},
respectively. 
The proof of Lemma~\ref{lem:calc} is finished.
\end{proof}

The second lemma concerns with the completeness of test function spaces. 

\begin{lem}\label{EQ:Frechet}
Assume that the measurable potential $V$
satisfies \eqref{1104-1}. Then 
$\mathcal X_V (\Omega)$ is complete. 
In addition to the assumption \eqref{1104-1}, if 
$V$ satisfies \eqref{ass:1}, then 
$\mathcal Z_V (\Omega)$ is complete.
\end{lem}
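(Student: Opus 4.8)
The plan is to show completeness of $\mathcal X_V(\Omega)$ and $\mathcal Z_V(\Omega)$ directly from the definitions by a standard Cauchy-sequence argument, exploiting that both spaces sit inside $L^1(\Omega)$ and that the semi-norms control the dyadic pieces $\phi_j(\sqrt{A_V})f$ in $L^1$ with geometric weights. First I would take a Cauchy sequence $\{f_k\}$ in $\mathcal X_V(\Omega)$, i.e. Cauchy with respect to every semi-norm $p_{V,M}$. Since $\|f_k-f_\ell\|_{L^1}\le p_{V,1}(f_k-f_\ell)$, the sequence is Cauchy in $L^1(\Omega)$ and hence converges to some $f\in L^1(\Omega)$. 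The task is then to show $f\in\mathcal X_V(\Omega)$ and $p_{V,M}(f_k-f)\to 0$ for each $M$.

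The key step is to pass the convergence through the operators $\phi_j(\sqrt{A_V})$ and $A_V^m$. For each fixed $j$, the operator $\phi_j(\sqrt{A_V})=\phi(2^{-j}\sqrt{A_V})$ is bounded on $L^1(\Omega)$ by Proposition~\ref{prop:Lp-bound} (uniform $L^1$-boundedness), so $\phi_j(\sqrt{A_V})f_k\to\phi_j(\sqrt{A_V})f$ in $L^1$. Similarly, by Lemma~\ref{lem:calc}(i) (with $r=p=1$), $A_V^m\phi(A_V)$ and more generally the relevant combinations are $L^1$-bounded, which lets me identify the $L^1$-limit of $A_V^Mf_k$ as $A_V^Mf$ in the distributional/spectral sense and conclude $A_V^Mf\in L^1(\Omega)\cap\mathcal D(A_V)$ for all $M$. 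To get the semi-norm convergence with the uniform-in-$j$ supremum, I would use the standard trick: fix $\varepsilon>0$; Cauchyness gives $N$ with $\sup_{j}2^{Mj}\|\phi_j(\sqrt{A_V})(f_k-f_\ell)\|_{L^1}<\varepsilon$ for $k,\ell\ge N$; letting $\ell\to\infty$ for each fixed $j$ (using the $L^1$-convergence just established) yields $2^{Mj}\|\phi_j(\sqrt{A_V})(f_k-f)\|_{L^1}\le\varepsilon$ for all $j$, hence the supremum is $\le\varepsilon$, giving $p_{V,M}(f_k-f)\to 0$. In particular $f\in\mathcal X_V(\Omega)$.

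For $\mathcal Z_V(\Omega)$ the argument is essentially identical, now using the semi-norms $q_{V,M}$ which additionally control $\sup_{j\le 0}2^{M|j|}\|\phi_j(\sqrt{A_V})f\|_{L^1}$. Under the extra hypothesis \eqref{ass:1}, Lemma~\ref{lem:calc}(iii) supplies the needed $L^1$-boundedness of $\phi_j(\sqrt{A_V})$ for negative $j$ as well (indeed uniformly, with the claimed scaling), so the same fix-$j$, take-the-limit, then-take-the-supremum procedure applies to both the high-frequency ($j\ge 0$) and low-frequency ($j\le 0$) parts simultaneously. One should also check that the limit $f$ still lies in $\mathcal X_V(\Omega)$, which is already covered by the first part, so $f\in\mathcal Z_V(\Omega)$ and $q_{V,M}(f_k-f)\to 0$.

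The main obstacle I anticipate is the interchange of limits in $j$ and $k$: the semi-norms involve a supremum over infinitely many $j$, so one cannot naively pass to the limit inside the sup. This is handled precisely by the $\varepsilon$-$N$ argument above — fixing $j$ first, using the (now available) $L^1$-continuity of each $\phi_j(\sqrt{A_V})$ to take $k\to\infty$, and only afterward taking the supremum over $j$ of the resulting uniform bound. The other point requiring a little care is verifying $A_V^Mf\in\mathcal D(A_V)$ for the limit; this follows because $A_V$ is closed (being self-adjoint) and the sequences $A_V^Mf_k$ and $A_V^{M+1}f_k$ both converge in $L^1$, but one must reconcile $L^1$-convergence with the $L^2$-based spectral calculus — most cleanly done by first establishing convergence of $\phi_j(\sqrt{A_V})A_V^Mf_k$ for each $j$ and using that $\{\phi_j(\sqrt{A_V})\}$ resolves the identity on the relevant subspace, as developed in Lemma~\ref{lem:decomposition1}.
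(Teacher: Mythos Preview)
Your proposal is correct and follows essentially the same approach as the paper: obtain an $L^1$-limit $f$ from Cauchyness, then use $L^1$-boundedness of each fixed $\phi_j(\sqrt{A_V})$ (via Lemma~\ref{lem:calc}) to pass the limit through the dyadic pieces and conclude seminorm convergence. The only cosmetic difference is that the paper handles the $\sup_j$ step by extracting an absolutely $p_{V,M}$-summable subsequence and summing telescopically, whereas your direct $\varepsilon$--$N$ argument (freeze $j$, let $\ell\to\infty$, then take the sup) is equivalent and arguably cleaner; you also rightly flag the verification that $A_V^M f\in\mathcal D(A_V)$, which the paper passes over silently.
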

\begin{proof}[\bf Proof]
We first show the completeness of $\mathcal X_V (\Omega)$. 
Let $\{ f_N \}_{N=1} ^\infty$ be a Cauchy sequence in $\mathcal X_V(\Omega)$. 
Then, for $M = 1,2, \dots$, there exists $C_M > 0$ such that 
\begin{equation}\label{1105-1}
p_{V,M} (f_N) \leq  C_M  
\quad \text{for all } N \in \mathbb N. 
\end{equation}
Since $\{ f_N \}$ is a Cauchy sequence in $ L^1 (\Omega)$, 
there exists a function $f \in  L^1 (\Omega)$ such that 
$$
f_N \to f \quad \text{ in }  L^1 (\Omega)
\text{ as } N \to \infty . 
$$
Combining this convergence with  
the 
boundedness
of 
$2^{Mj}\phi_j (\sqrt{A_V})$ from $ L^1 (\Omega)$ to itself, 
which is assured by \eqref{inhom} 
for $\alpha = 0$ and \eqref{1105-1}, 
we have 
$$
2^{Mj} \| \phi_j (\sqrt{A_V}) f \|_{ L^1 }  
= 
\lim _{N \to \infty} 
2^{Mj} \| \phi_j (\sqrt{A_V}) f_N \|_{ L^1 },  
$$
and hence,
$$
p_{V,M} (f) \leq  C_M 
$$
for $M = 1,2,\dots$. Hence we get 
$f \in \mathcal X_V (\Omega)$. 
We next show the convergence of $f_N$ to $f$ in $\mathcal X_V (\Omega)$. 
For each $M$, let us take a subsequence 
$\{ f_{N(k)} \}_{k = 1}^\infty$ such that 
$$
p_{V,M} (f_{N( k)}  -f _{N( k-1 )}) \leq 2^{-k},
$$ 
where we put $f_{N(0)}=0$. Hence we have 
\begin{equation}\label{1105-2}
\sum _{k=1} ^\infty
p_{V,M} (f_{N(k)}  -f _{N(k-1)})
< \infty. 
\end{equation}
Since 
$\{ f_{N(k)} \}_{k = 1}^\infty$ is a Cauchy sequence in 
$ L^1 (\Omega)$, 
$f$ is written by 
\begin{equation}\label{1105-3}
f = \lim _{L \to \infty} f_{N(L)} 
= \lim _{L \to \infty}\sum _{k = 1} ^L (f_{N(k)}  -f _{N(k-1)}) 
\quad \text{in } L^1 (\Omega). 
\end{equation}
Then \eqref{1105-2} and \eqref{1105-3} yield the convergence of 
$p_{V,M} (f_{N(L)} - f)$ to zero as 
$L \to \infty$, and hence, 
$$
p_{V,M} (f_{N} - f) \to 0 
\quad \text{as } N \to \infty 
\quad \text{for } M = 1,2,\cdots .
$$
Therefore, $\mathcal X_V(\Omega)$ is complete. 

We next show the completeness of $\mathcal Z_V (\Omega)$.
Let $\{ f_N \}_{N=1} ^\infty$ be a Cauchy sequence in $\mathcal Z_V(\Omega)$. 
Since $\mathcal Z_V (\Omega)$ is a subspace of $\mathcal X_V(\Omega)$ 
and $\mathcal X_V (\Omega)$ is complete, 
$\{ f_N \}_{N=1}^\infty$ is also a Cauchy sequence in $\mathcal X_V(\Omega)$  
and there exists an element 
$f \in \mathcal X_V(\Omega)$ such that 
$f_N $ converges to $f$ in $\mathcal X_V(\Omega)$ as $N \to \infty$. 
In order to prove $f \in \mathcal Z_V (\Omega)$, 
we show that 
\begin{equation}\label{901-1}
\sup _{ j \leq 0} 2^{M |j| } \| \phi_j (\sqrt{A_V} ) f\|_{L^1 } < \infty 
\quad \text{for } M = 1,2, \cdots . 
\end{equation}
Since $f_N$ converges to $f$ in $L^1 (\Omega)$ as $N \to \infty$ 
and $\phi_j (\sqrt{A_V})$ is bounded on 
$L^1 (\Omega)$ for each $j \in \mathbb Z$ 
by \eqref{homog} for $\alpha = 0$, 
it follows that 
$$
\lim _{N \to \infty} \| \phi_j (\sqrt{A_V} ) f_N \|_{L^1} 
= \| \phi_j (\sqrt{A_V} ) f \|_{L^1} 
\quad \text{for any } j \in \mathbb Z .
$$
Since $\{ f_N \}_{N=1}^\infty$ is a Cauchy sequence in $\mathcal Z_V(\Omega)$, 
$\{ q_{ V,M}(f_N) \}_{N=1}^\infty$ is a bounded sequence for each $M$ and 
there exists a constant $C_M>0$ 
depending only on $M$ such that 
$$
2^{M |j| } \| \phi_j (\sqrt {-\Delta}) f_N \|_{L^1} 
\leq C_M 
\quad \text{for all } j \leq 0 \text{ and } N = 1,2,\cdots . 
$$
By taking the limit as $N \to \infty $ in the above inequality,  
we conclude that $f$ satisfies \eqref{901-1}, 
and hence, $f \in \mathcal Z_V (\Omega)$. 
Finally, the convergence of $f_N$ to $f$ in $\mathcal Z_V (\Omega)$ 
follows from the analogous argument to 
\eqref{1105-2} and \eqref{1105-3}:
\begin{equation*}
\sum _{k=1} ^\infty
q_{V,M} (f_{N(k)}  -f _{N(k-1)})
< \infty , 
\end{equation*}
\begin{equation*}
f = \lim _{L \to \infty}\sum _{k = 1} ^L (f_{N(k)}  -f _{N(k-1)}) 
\quad \text{in }  L^1 (\Omega), 
\end{equation*}
which imply that
$$
q_{V,M} (f_{N} - f) \to 0 
\quad \text{as } N \to \infty 
\quad \text{for } M = 1,2,\cdots .
$$
Thus we conclude that $\mathcal{Z}_V(\Omega)$
is complete. 
The proof of Lemma \ref{EQ:Frechet} is finished. 
\end{proof}

The third lemma is useful in proving Lemma \ref{lem:decomposition1}. 

\begin{lem}\label{lem:map_M}
Assume that the measurable potential $V$
satisfies \eqref{1104-1}. Then the following assertions hold{\rm :}
\begin{enumerate}
\item[(i)] 
For any $f \in \mathcal X'_V (\Omega)$, 
there exist a number
$M_0 \in \mathbb N$ and a constant $C_f > 0$ such that 
\begin{equation}\notag 
\big| 
{}_{{\mathcal X}_V' }
\langle f , g \rangle _{{\mathcal X}_V} 
\big| 
\leq C_f\, p_{V,M_0} (g) 
\quad \text{for any } g \in \mathcal X_V (\Omega) . 
\end{equation}
\item[(ii)] 
In addition to the assumption \eqref{1104-1}, if 
$V$ satisfies \eqref{ass:1}, then 
for any $f \in \mathcal Z'_V (\Omega)$, 
there exist a number
$ M_1 \in \mathbb N$ and a constant $C_f > 0$ such that 
\begin{equation}\notag 
\big| 
{}_{{\mathcal Z}_V' }
\langle f , g \rangle _{{\mathcal Z}_V} 
\big| 
\leq C_f q_{V, M_1} (g) 
\quad \text{for any } g \in \mathcal Z_V (\Omega) . 
\end{equation}
\end{enumerate}
\end{lem}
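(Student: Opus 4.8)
The plan is to deduce this statement from the general description of continuous linear functionals on a locally convex space whose topology is defined by a countable, nondecreasing family of seminorms. Recall from Lemma~\ref{EQ:Frechet} that $\mathcal X_V(\Omega)$ is a Fr\'echet space, and by Definition~\ref{def:1} its topology is generated by the family $\{p_{V,M}\}_{M=1}^\infty$. Since the index $j$ in the definition of $p_{V,M}$ ranges over $\mathbb N$, we have $2^{Mj}\le 2^{M'j}$ whenever $M\le M'$, so $p_{V,M}(g)\le p_{V,M'}(g)$; thus the family is nondecreasing, and a base of neighborhoods of $0$ in $\mathcal X_V(\Omega)$ is given by the single-seminorm balls $U_{M,\delta}:=\{g\in\mathcal X_V(\Omega)\,:\,p_{V,M}(g)<\delta\}$, $M\in\mathbb N$, $\delta>0$.

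Given $f\in\mathcal X'_V(\Omega)$, that is, a continuous linear functional, continuity at $0$ together with $f(0)=0$ yields $M_0\in\mathbb N$ and $\delta>0$ such that $\big|{}_{\mathcal X'_V}\langle f,g\rangle_{\mathcal X_V}\big|<1$ for all $g\in U_{M_0,\delta}$. Now let $g\in\mathcal X_V(\Omega)$ be arbitrary. If $p_{V,M_0}(g)>0$, we apply the previous bound to $\tfrac{\delta}{2\,p_{V,M_0}(g)}\,g\in U_{M_0,\delta}$ and use homogeneity of the duality pairing under positive real scalars to obtain $\big|{}_{\mathcal X'_V}\langle f,g\rangle_{\mathcal X_V}\big|\le \tfrac{2}{\delta}\,p_{V,M_0}(g)$. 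If $p_{V,M_0}(g)=0$, then $tg\in U_{M_0,\delta}$ for every $t>0$, whence $t\,\big|{}_{\mathcal X'_V}\langle f,g\rangle_{\mathcal X_V}\big|<1$ for all $t>0$, forcing ${}_{\mathcal X'_V}\langle f,g\rangle_{\mathcal X_V}=0$, so the inequality holds trivially. This proves (i) with $C_f=2/\delta$ and the stated $M_0$.

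For (ii), under the additional assumption \eqref{ass:1} the space $\mathcal Z_V(\Omega)$ is likewise a Fr\'echet space by Lemma~\ref{EQ:Frechet}, with topology generated by the family $\{q_{V,M}\}_{M=1}^\infty$, and $q_{V,M}\le q_{V,M'}$ for $M\le M'$ because $2^{M|j|}\le 2^{M'|j|}$; hence the same single-seminorm neighborhood base is available. Repeating the argument of the previous paragraph verbatim, with $p_{V,M}$ replaced by $q_{V,M}$ and $\mathcal X_V$ by $\mathcal Z_V$, produces $M_1\in\mathbb N$ and $C_f>0$ with the asserted estimate. There is no genuine difficulty in this lemma: the only points that require a moment's care are checking that the two seminorm families are directed (so that a base of $0$-neighborhoods consists of single-seminorm balls) and handling the degenerate case $p_{V,M_0}(g)=0$ (resp.\ $q_{V,M_1}(g)=0$); everything else is the standard dual-characterization argument for Fr\'echet spaces.
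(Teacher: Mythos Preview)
Your proof is correct and is essentially the same standard argument as the paper's, just packaged directly rather than by contradiction: both exploit that the seminorm families $\{p_{V,M}\}_M$ and $\{q_{V,M}\}_M$ are monotone in $M$, so continuity at $0$ yields a bound by a single seminorm. The paper instead negates the conclusion, produces $g_m$ with $|\langle f,g_m\rangle|>m\,p_{V,m}(g_m)$, normalizes to $\widetilde g_m=g_m/(m\,p_{V,m}(g_m))$, and derives a contradiction from $\widetilde g_m\to 0$ in $\mathcal X_V(\Omega)$ while $|\langle f,\widetilde g_m\rangle|>1$; your direct route via the neighborhood base is a touch cleaner but proves exactly the same thing by the same mechanism.
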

\begin{proof}[\bf Proof.]
Suppose that {\rm (i)} is not true. 
Then  
for any $m \in \mathbb N$ there exists $g_m \in \mathcal X_V (\Omega )$ 
such that 
\begin{equation}\label{1112-1}
\big| 
{}_{{\mathcal X}_V' }
\langle f , g_m \rangle _{{\mathcal X}_V} 
\big| 
> m p_{V,m} (g_m).
\end{equation}
Put 
$$
\widetilde g_m := 
\frac{g_m}{ m p_{V,m} (g_m)} . 
$$
Noting that $p_{V,k} (\widetilde g_m)$ is 
monotonically increasing in $k\in \{1,2,\dots,m\}$,
we have
$$
p_{V,k} (\widetilde g_m) 
\leq p_{V,m} (\widetilde g_m) 
= \frac{1}{m} 
\quad \text{ for } k = 1,2,\cdots , m.
$$
Hence it follows that 
for any fixed $k \in \mathbb N$
$$
p_{V,k} (\widetilde g_{m}) \to 0
\quad \text{as } m \to \infty;
$$
thus we find that 
$$
 \widetilde g_m \to 0 \quad 
 \text{in } \mathcal X_V (\Omega) 
 \text{ as } m \to \infty . 
$$
The above convergence yields that
\begin{equation}\label{EQ:contra}
\big| 
{}_{{\mathcal X}_V' }
\langle f , \widetilde g_m \rangle _{{\mathcal X}_V} 
\big| 
\to 0 
\quad \text{as } m \to \infty. 
\end{equation}
However, the assumption \eqref{1112-1} implies that 
$$
\big| 
{}_{{\mathcal X}_V' }
\langle f , \widetilde g_m \rangle _{{\mathcal X}_V} 
\big| > 1 \quad \text{for all $m\in \mathbb{N}$;}
$$
therefore this inequality contradicts 
\eqref{EQ:contra}. 
Thus the assertion {\rm (i)} holds. 
The assertion {\rm (ii)} follows analogously. 
This ends the proof of Lemma \ref{lem:map_M}.
\end{proof}

The following lemma states that the mapping $\phi (A_V)$ 
is well-defined on $\mathcal X_V (\Omega)$, $\mathcal Z_V (\Omega) $ 
and their duals. 

\begin{lem}\label{lem:mapping}
Assume that the measurable potential $V$
satisfies \eqref{1104-1}. Then 
the following assertions hold{\rm :}

\begin{enumerate}
\item[(i)] 
For any $\phi \in C_0^\infty (\mathbb R)$, 
$\phi (A_V)$ maps continuously from 
$\mathcal X_V (\Omega)$
into itself, and maps continuously 
from $\mathcal X'_V (\Omega)$ into itself.

\item[(ii)] 
In addition to the assumption \eqref{1104-1}, 
if $V$ satisfies \eqref{ass:1}, then 
for any $\phi \in C_0^\infty ((0,\infty))$, 
$\phi (A_V)$ maps continuously from 
$\mathcal Z_V (\Omega)$ into itself, and 
maps continuously from $\mathcal Z'_V (\Omega)$ 
into itself. 
\end{enumerate}
\end{lem}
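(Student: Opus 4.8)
The plan is to prove the four continuity statements in pairs: first the map on the test function spaces themselves, then the induced map on the duals. For part (i), I would start by showing that $\phi(A_V)$ sends $\mathcal X_V(\Omega)$ into itself. Given $f \in \mathcal X_V(\Omega)$, I need to control $p_{V,M}(\phi(A_V)f) = \|\phi(A_V)f\|_{L^1} + \sup_{j\in\mathbb N} 2^{Mj}\|\phi_j(\sqrt{A_V})\phi(A_V)f\|_{L^1}$. The $L^1$-term is immediate from Lemma~\ref{lem:calc}(i) (with $r=p=1$, $m=0$). For the high-frequency term, the key point is that $\phi \in C_0^\infty(\mathbb R)$ has compact support, so for $j$ large enough $\phi_j(\sqrt{\lambda})\phi(\lambda)$ vanishes identically — more precisely, choosing $j_0$ so that $\mathrm{supp}\,\phi_0(2^{-j}\cdot)$ and $\mathrm{supp}\,\phi$ are disjoint for $j \ge j_0$, we get $\phi_j(\sqrt{A_V})\phi(A_V) = 0$ for $j \ge j_0$. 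For the finitely many remaining $j < j_0$, each $\phi_j(\sqrt{A_V})\phi(A_V)$ is $L^1$-bounded by Lemma~\ref{lem:calc}, so $2^{Mj}\|\phi_j(\sqrt{A_V})\phi(A_V)f\|_{L^1} \le C_M\|f\|_{L^1} \le C_M\, p_{V,1}(f)$. Combining, $p_{V,M}(\phi(A_V)f) \le C_M\, p_{V,M}(f)$ (or even $\le C_M\, p_{V,1}(f)$), which gives both well-definedness and continuity of $\phi(A_V):\mathcal X_V(\Omega)\to\mathcal X_V(\Omega)$.

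For the continuity on $\mathcal X'_V(\Omega)$, recall that $\phi(A_V)$ on the dual is defined in Definition~\ref{def:3} by $\langle\phi(A_V)f,g\rangle = \langle f,\phi(A_V)g\rangle$; this makes sense precisely because $\phi(A_V)g \in \mathcal X_V(\Omega)$ by the first part, so $\phi(A_V)f$ is a well-defined linear functional on $\mathcal X_V(\Omega)$. That it is actually an element of $\mathcal X'_V(\Omega)$, i.e.\ continuous, follows from Lemma~\ref{lem:map_M}(i): there are $M_0$ and $C_f$ with $|\langle f,h\rangle| \le C_f\, p_{V,M_0}(h)$ for all $h\in\mathcal X_V(\Omega)$; applying this to $h = \phi(A_V)g$ and using the bound $p_{V,M_0}(\phi(A_V)g) \le C_{M_0}\, p_{V,M_0}(g)$ from the first part gives $|\langle\phi(A_V)f,g\rangle| \le C_f C_{M_0}\, p_{V,M_0}(g)$, so $\phi(A_V)f \in \mathcal X'_V(\Omega)$. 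Sequential continuity is then automatic: if $f_N \to f$ in $\mathcal X'_V(\Omega)$, then for each fixed $g$ one has $\langle\phi(A_V)f_N,g\rangle = \langle f_N,\phi(A_V)g\rangle \to \langle f,\phi(A_V)g\rangle = \langle\phi(A_V)f,g\rangle$, which is exactly convergence $\phi(A_V)f_N \to \phi(A_V)f$ in $\mathcal X'_V(\Omega)$.

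For part (ii), the argument is structurally identical, working with the seminorms $q_{V,M}$ instead of $p_{V,M}$ and with $\phi \in C_0^\infty((0,\infty))$ — the assumption that $\mathrm{supp}\,\phi \subset (0,\infty)$ (together with the extra hypothesis \eqref{ass:1}, which is what makes $\phi(A_V)$, $\phi_j(\sqrt{A_V})$ behave well down to the bottom of the spectrum) is needed so that Lemma~\ref{lem:calc}(iii) applies for \emph{all} $j\in\mathbb Z$, not just $j\in\mathbb N$. Given $f\in\mathcal Z_V(\Omega)$, one shows $q_{V,M}(\phi(A_V)f) \le C_M\, q_{V,M}(f)$: the $L^1$-term and the high-frequency ($j\ge 0$) part of the supremum are handled exactly as before using the compact support of $\phi$ away from $0$ and Lemma~\ref{lem:calc}(iii); the new feature is the low-frequency part $\sup_{j\le 0} 2^{M|j|}\|\phi_j(\sqrt{A_V})\phi(A_V)f\|_{L^1}$, but again $\mathrm{supp}\,\phi$ is compact in $(0,\infty)$ while $\mathrm{supp}\,\phi_j(\sqrt{\cdot})$ concentrates near $\lambda\sim 2^{2j}\to 0$ as $j\to-\infty$, so $\phi_j(\sqrt{A_V})\phi(A_V)=0$ for $j$ sufficiently negative, leaving only finitely many terms to bound by $q_{V,1}(f)$ via Lemma~\ref{lem:calc}(iii). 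The dual statement then follows from Lemma~\ref{lem:map_M}(ii) verbatim as above. The only mildly delicate point — and the one I would state carefully — is the disjoint-support observation that kills all but finitely many dyadic pieces at each end of the spectrum; everything else is a direct application of the already-established lemmas.
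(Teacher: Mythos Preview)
Your proof is correct, but it takes a somewhat different route from the paper's. The paper does not use the disjoint-support observation at all; instead it simply commutes the spectral multipliers, writing $\phi_j(\sqrt{A_V})\phi(A_V)f = \phi(A_V)\phi_j(\sqrt{A_V})f$, and then applies the $L^1$-boundedness of $\phi(A_V)$ from Lemma~\ref{lem:calc}(i) with $m=0$ to get $\|\phi_j(\sqrt{A_V})\phi(A_V)f\|_{L^1} \le C\|\phi_j(\sqrt{A_V})f\|_{L^1}$ uniformly in $j$. This immediately yields $p_{V,M}(\phi(A_V)f) \le C\,p_{V,M}(f)$ and, in part~(ii), handles the low-frequency supremum $\sup_{j\le 0}2^{M|j|}\|\phi_j(\sqrt{A_V})\phi(A_V)f\|_{L^1}$ in a single line without any case analysis on $j$. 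Your disjoint-support argument is a legitimate alternative and in fact yields the slightly sharper bound $p_{V,M}(\phi(A_V)f)\le C_M\,p_{V,1}(f)$; the paper's commutation trick is shorter and treats all $j$ uniformly. One small point you gloss over (which the paper states explicitly) is that membership in $\mathcal X_V(\Omega)$ also requires $A_V^m\phi(A_V)f \in L^1(\Omega)\cap\mathcal D(A_V)$ for every $m$, not just the seminorm bound; this follows at once since $\lambda^m\phi(\lambda)\in C_0^\infty(\mathbb R)$, but it is worth saying. Your treatment of the dual spaces is more detailed than the paper's, which simply cites the duality Definition~\ref{def:3}; your explicit use of Lemma~\ref{lem:map_M} is a welcome clarification.
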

\begin{proof}[\bf Proof.]
First we prove the assertion {\rm (i)}. 
Let $f \in \mathcal X_V (\Omega)$. 
It follows from \eqref{orig1} 
in Lemma \ref{lem:calc} 
that 
\begin{equation}\label{1105-6}
A_V^m \phi(A_V)f\in \mathcal D(A_V),
 \quad 
p_{V,M} ( \phi(A_V)f)  
\leq C p_{V,M} ( f)  
\end{equation}
for $m= 0,1,2,\ldots$; 
$M = 1,2,\dots$.
This proves that $\phi (A_V)$ is 
continuous from $\mathcal X_V (\Omega)$ into
itself.  
The continuity of $\phi (A_V)$ from 
$\mathcal X'_V (\Omega)$ into itself 
follows from the definition \eqref{901-4}.

As to the assertion (ii), 
since $V$ satisfies \eqref{1104-1}, 
$\phi (A_V)$ enjoys the assertion {\rm (i)}, 
and hence,
we conclude that 
$$
\phi (A_V) f \in \mathcal X_V (\Omega) 
\quad \text{for any } f \in \mathcal Z_V (\Omega). 
$$
We show that
\begin{equation}\label{EQ:q-esti}
q_{V,M} (\phi(A_V)f)\le Cq_{V,M}(f)
\end{equation}
for $M=1,2,\ldots$. 
Indeed, recalling the definition 
\eqref{EQ:qV} of $q _{V,M} (f)$ and noting that 
$$
q_{V,M}(\phi(A_V)f) \leq 
p_{V,M}(\phi(A_V)f)
+ \sup_{j \leq 0} 2^{M |j|} \| \phi_j (\sqrt{A_V}) \phi(A_V)f \|_{L^1},
$$
we apply \eqref{1105-6} to the first term to obtain
$$
p_{V,M} (\phi (A_V)f) 
\leq C p _{V,M} (f) 
\leq C q _{V,M} (f). 
$$
For the second term in $q_{V,M} ( \phi (A_V)f)$, 
again applying \eqref{orig1} for $m = 0$, 
we estimate  
\begin{equation}\notag 
\begin{split}
 \sup _{ j \leq 0} 2^{M |j|} 
 \| \phi_j (\sqrt{A_V} ) \phi (A_V)f \|_{L^1} 
\leq 
& 
C 
 \sup _{ j \leq 0} 2^{M |j|} 
 \| \phi_j (\sqrt{A_V} ) f \|_{L^1}  
\\
 \leq 
 & 
  C q_{V,M} (f)
\end{split}
\end{equation}
for $M = 1,2, \dots$. Therefore, the 
above two estimates imply \eqref{EQ:q-esti},
which concludes the continuity of 
$\phi (A_V)$ from $\mathcal Z_V (\Omega)$ 
into itself. Finally, 
the continuity of $\phi (A_V)$ from $\mathcal Z'_V (\Omega)$ into itself 
follows from the definition \eqref{901-5}. 
The proof of Lemma \ref{lem:mapping} is finished. 
\end{proof}

The approximation of identity is established by the following lemma. 

\begin{lem}\label{lem:decomposition1}
Assume that the measurable potential $V$
satisfies \eqref{1104-1}. Then 
the following assertions hold{\rm :}
\begin{enumerate}
\item[(i)] 
For any $f \in \mathcal X_V(\Omega)$, we have
\begin{equation}\label{907-1}
f = \psi (A_V) f 
    + \sum _{ j \in \mathbb N} \phi_j (\sqrt{A_V}) f 
   \quad \text{in} \quad 
    \mathcal X_V (\Omega). 
\end{equation}
Furthermore, for any $f \in \mathcal X_V' (\Omega)$, 
we have also  the identity \eqref{907-1} 
in $\mathcal X '_V (\Omega)$, 
and $\psi (A_V) f$ and $\phi_j (\sqrt{A_V}) f$ are regarded as elements 
in $L^\infty (\Omega)$. 

\item[(ii)] In addition to the assumption \eqref{1104-1}, 
if $V$ satisfies \eqref{ass:1}, then 
for any $f \in \mathcal Z_V(\Omega)$, we have 
\begin{equation}\label{907-2}
f =  \sum _{ j \in \mathbb Z} \phi_j (\sqrt{A_V}) f 
   \quad \text{in} \quad 
    \mathcal Z_V (\Omega). 
\end{equation}
Furthermore, for $f \in \mathcal Z_V' (\Omega)$, we have also the identity \eqref{907-2} 
in $\mathcal Z '_V (\Omega)$, 
and $\phi_j (\sqrt{A_V}) f$ are regarded as  elements in $L^\infty (\Omega)$.  
\end{enumerate}
\end{lem}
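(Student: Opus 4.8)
The plan is to establish the decomposition first on the level of $L^2(\Omega)$ via the spectral theorem, then lift it to the test function spaces using the uniform $L^1$-bounds, and finally dualize. For part (i), fix $f \in \mathcal X_V(\Omega)$. Since $f \in L^2(\Omega)$ and $\psi(\lambda) + \sum_{j\in\mathbb N}\phi_j(\sqrt\lambda) = 1$ pointwise on the spectrum of $A_V$ by \eqref{917-3}, the spectral theorem gives $f = \psi(A_V)f + \sum_{j\in\mathbb N}\phi_j(\sqrt{A_V})f$ with convergence in $L^2(\Omega)$. To upgrade this to convergence in $\mathcal X_V(\Omega)$, I would show that the partial sums $S_N f := \psi(A_V)f + \sum_{j=1}^N \phi_j(\sqrt{A_V})f$ form a Cauchy sequence in every semi-norm $p_{V,M}$. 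The key point is that for a tail $\sum_{j=N_1}^{N_2}\phi_j(\sqrt{A_V})f$, one applies $\phi_k(\sqrt{A_V})$ and uses that $\phi_k\phi_j \equiv 0$ unless $|k-j|\le 1$ (by the support condition \eqref{917-1}); hence $\phi_k(\sqrt{A_V})\sum_{j=N_1}^{N_2}\phi_j(\sqrt{A_V})f$ is a sum of at most three terms, each controlled by $\|\phi_j(\sqrt{A_V})f\|_{L^1}$ via the uniform $L^1$-boundedness from \eqref{inhom} with $\alpha=0$. Since $f\in\mathcal X_V(\Omega)$ gives $2^{Mj}\|\phi_j(\sqrt{A_V})f\|_{L^1}\to 0$, these tails are small in $p_{V,M}$, and completeness of $\mathcal X_V(\Omega)$ (Lemma~\ref{EQ:Frechet}) identifies the limit with $f$ itself (using that the $L^2$-limit and $\mathcal X_V$-limit must agree, e.g. after passing to an a.e.-convergent subsequence in $L^1$).

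For the dual statement in part (i), let $f \in \mathcal X'_V(\Omega)$. Using Definition~\ref{def:3}(i) and the already-established decomposition for test functions, for any $g \in \mathcal X_V(\Omega)$ one computes
\[
{}_{\mathcal X'_V}\langle \psi(A_V)f + \sum_{j\in\mathbb N}\phi_j(\sqrt{A_V})f, g\rangle_{\mathcal X_V}
= {}_{\mathcal X'_V}\langle f, \psi(A_V)g + \sum_{j\in\mathbb N}\phi_j(\sqrt{A_V})g\rangle_{\mathcal X_V}
= {}_{\mathcal X'_V}\langle f, g\rangle_{\mathcal X_V},
\]
where the interchange of $f$ with the infinite sum is justified by the continuity of the duality pairing together with the $\mathcal X_V$-convergence of the partial sums of $g$, i.e. by Lemma~\ref{lem:map_M}(i) which bounds $|{}_{\mathcal X'_V}\langle f,\cdot\rangle_{\mathcal X_V}|$ by a single semi-norm $p_{V,M_0}$. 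This proves \eqref{907-1} in $\mathcal X'_V(\Omega)$. That $\psi(A_V)f$ and $\phi_j(\sqrt{A_V})f$ lie in $L^\infty(\Omega)$ follows by a duality argument: for $h\in L^1(\Omega)$ one interprets $\phi_j(\sqrt{A_V})f$ through $\langle f,\phi_j(\sqrt{A_V})h\rangle$ (after checking $\phi_j(\sqrt{A_V})h\in\mathcal X_V(\Omega)$ for $h$ in a dense subclass), again estimating by $p_{V,M_0}$ and the $L^1\to L^1$ bounds, which yields the required $L^\infty$ bound by the converse of Hölder's inequality.

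Part (ii) is entirely parallel: under the additional assumption \eqref{ass:1} the operator $A_V$ is non-negative with no zero eigenvalue, so $\sum_{j\in\mathbb Z}\phi_j(\sqrt{A_V})f = f$ holds in $L^2(\Omega)$ for $f$ in the range of the spectral projection onto $(0,\infty)$, which contains $\mathcal Z_V(\Omega)$. The lift to $\mathcal Z_V(\Omega)$ uses the semi-norms $q_{V,M}$ and the homogeneous bound \eqref{homog} with $\alpha=0$, estimating both high- and low-frequency tails; here the definition of $\mathcal Z_V(\Omega)$ supplies the decay $2^{M|j|}\|\phi_j(\sqrt{A_V})f\|_{L^1}\to 0$ as $|j|\to\infty$ in both directions, which is exactly what is needed for the tails to be Cauchy. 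The dual statement and the $L^\infty$ membership are obtained as in part (i), now invoking Lemma~\ref{lem:map_M}(ii) and Lemma~\ref{lem:mapping}(ii). The main obstacle, and the step deserving the most care, is the low-frequency tail in the homogeneous case: unlike on $\mathbb R^n$ where one has the vanishing-moment condition \eqref{EQ:low}, here one must show that $\sum_{j\le -N}\phi_j(\sqrt{A_V})f \to 0$ in $\mathcal Z_V(\Omega)$ purely from the semi-norm decay \eqref{18-8n} built into the definition of $\mathcal Z_V(\Omega)$, combined with the fact that zero is not an eigenvalue so that the $L^2$-series genuinely converges to $f$ with no leftover low-frequency mass; reconciling the $L^2$ and $L^1$ (hence $\mathcal Z_V$) convergences at the bottom of the spectrum is the delicate point.
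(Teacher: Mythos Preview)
Your proposal is correct and follows essentially the same strategy as the paper: establish the identity first in $L^2(\Omega)$ via the spectral theorem (using in part~(ii) that zero is not an eigenvalue of $A_V$), lift it to the test function space through semi-norm estimates, then dualize using Lemma~\ref{lem:map_M}, and finally obtain the $L^\infty$ membership by bounding the pairing against $\|g\|_{L^1}$ and invoking Hahn--Banach together with $(L^1)^*=L^\infty$. The only tactical difference is that the paper shows \emph{absolute} convergence of the series in $\mathcal X_V(\Omega)$ (resp.\ $\mathcal Z_V(\Omega)$) via the gain $p_{V,M}(\phi_j(\sqrt{A_V})f)\le C\,2^{-2j}p_{V,M+2}(f)$ from \eqref{inhom} with $\alpha=\pm1$ (and $q_{V,M}(\phi_j(\sqrt{A_V})f)\le C\,2^{2j}q_{V,M+2}(f)$ for $j\le0$ from \eqref{homog}), whereas you argue Cauchy-ness directly from the almost-orthogonality $\phi_k\phi_j\equiv0$ for $|k-j|>1$; both routes are standard and lead to the same conclusion.
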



\begin{proof}
[\bf Proof] 
First we prove the assertion {\rm (i)}. 
Let $f \in \mathcal X_V (\Omega)$. 
Then we have $f \in L^2 (\Omega)$, and $f$ is written as 
$$
f = \psi (A_V) f 
   + \sum _{ j \in \mathbb N} \phi_j (\sqrt{A_V}) f 
   \quad \text{in} \quad 
   L^2 (\Omega).
$$
It is sufficient to verify that the series in 
the right member is absolutely convergent in 
$\mathcal X_V(\Omega)$. 
Let $M \in \mathbb N$ be arbitrarily fixed. 
Applying \eqref{inhom} for $\alpha = 0, 1$ from Lemma \ref{lem:calc}, we have 
\[
p_{V,M} \big(\psi (A_V) f \big) 
\leq C p_{V,M} (f),
\]
\begin{equation}\notag 
\begin{split}
p_{V,M} \big( \phi_j (\sqrt{A_V}) f \big) 
\leq
& C 2^{-2j} p_{V,M} \big( A_V \phi_j (\sqrt{A_V})  f \big)
\\
\leq 
&
 C 2^{-2j} p_{V,M+2} (f),
\end{split}
\end{equation}
which imply that 
\begin{equation}\label{1105-8}
\sum _{ j \in \mathbb N} 
p_{V,M} \big(\phi_j (\sqrt{A_V}) f \big) 
\leq C p_{V,M+2} (f) \sum _{j \in \mathbb N} 2^{-2j}  < \infty.
\end{equation}
Hence 
\eqref{907-1} holds for $f\in \mathcal{X}_V(\Omega)$.
As to the expansion \eqref{907-1} 
for $f \in \mathcal X_V '(\Omega)$, 
applying the identity \eqref{907-1} for $g\in\mathcal{X}_V(\Omega)$, we have 
formally the following identity: 
\begin{equation}\label{1112-2}
\begin{split}
{}_{\mathcal X'_V} \langle f , g \rangle_{\mathcal X_V} 
& 
= 
{}_{\mathcal X'_V} \langle f , \psi (A_V) g \rangle_{\mathcal X_V} 
+ 
\sum _{ j \in \mathbb N} 
{}_{\mathcal X'_V} \langle f , \phi_j (\sqrt{A_V}) g \rangle_{\mathcal X_V} 
\\
& 
= 
{}_{\mathcal X'_V} \langle \psi (A_V) f , g \rangle_{\mathcal X_V} 
+ 
\sum _{ j \in \mathbb N} 
{}_{\mathcal X'_V} \langle \phi_j (\sqrt{A_V}) f , g \rangle_{\mathcal X_V} ,
\end{split}
\end{equation}
where the second equality is valid due to the definition \eqref{901-4}. 
We must prove the absolute convergence of the series in \eqref{1112-2}.
By Lemma \ref{lem:map_M} {\rm (i)}, 
there exist $M_0 \in \mathbb N$ and $C > 0$ such that 
\begin{equation}\notag 
\begin{split}
\big|
{}_{\mathcal X'_V} \langle \phi_j (\sqrt{A_V}) f , g \rangle_{\mathcal X_V}
\big| 
=
& 
 \big|
{}_{\mathcal X'_V} \langle f , \phi_j (\sqrt{A_V}) g \rangle_{\mathcal X_V}
\big| 
\\
\leq 
& 
 C_f p_{V,M_0}(\phi_j (\sqrt{A_V}) g) . 
\end{split}
\end{equation}
Then, the above estimate and \eqref{1105-8} yield 
the absolute convergence of the series in \eqref{1112-2}.

For the proof of $\psi (A_V) f 
\in L^\infty (\Omega)$, 
we begin by proving that 
\begin{equation}\label{1117-1}
\big|
{}_{\mathcal X'_V} \langle \psi (A_V) f , g \rangle_{\mathcal X_V}
\big| 
\leq C \| g \|_{L^1} 
\quad \text{for all } g \in \mathcal X_V (\Omega). 
\end{equation}
By the definition \eqref{901-4}, Lemma \ref{lem:map_M} {\rm (i)} and 
\eqref{orig1} for $m = 0$, 
there exist $M_0 \in \mathbb N$ 
and $C_f, C_{f,\psi} > 0$ such that 
\begin{equation}\notag 
\begin{split}
\big|
{}_{\mathcal X'_V} \langle \psi (A_V) f , g \rangle_{\mathcal X_V}
\big| 
= 
& 
\big|
{}_{\mathcal X'_V} \langle f , \psi (A_V) g \rangle_{\mathcal X_V}
\big| 
\\
\leq 
& 
C_f p_{V,M_0} (\psi (A_V) g) 
\\
 \leq 
 &
 C_{f,\psi}  \| g \| _{L^1},
\end{split}
\end{equation}
which proves \eqref{1117-1}. 
Thanks to \eqref{1117-1}, 
the Hahn-Banach theorem allows us to 
deduce that the mapping 
$$
{}_{\mathcal X'_V} \langle \psi (A_V) f , \cdot \rangle_{\mathcal X_V}
: \mathcal X_V (\Omega) \to \mathbb C
$$ 
is extended as a mapping 
from $L^1 (\Omega)$ to $\mathbb C$. 
Since $L^1 (\Omega)^* = L^\infty (\Omega)$, there exists 
a function $F \in L^\infty(\Omega)$ 
such that 
$$
{}_{\mathcal X'_V} \langle \psi (A_V) f , g \rangle_{\mathcal X_V}
= \int_{\Omega} F (x) \overline{g (x)} \, dx  
\quad 
\text{for all } g \in \mathcal X_V (\Omega) . 
$$
Then we conclude that $\psi (A_V) f \in L^\infty (\Omega)$. In a similar way, 
it is possible to prove that 
$\phi_j (\sqrt{A_V})f \in L^\infty (\Omega)$. 
The proof of {\rm (i)} is now complete.

As to the assertion {\rm (ii)}, 
noting that 
any $f\in \mathcal{Z}_V(\Omega)$ is in $L^2 (\Omega)$, 
we first prove that 
\begin{equation}\label{EQ:L2}
f = \sum _{j \in \mathbb Z} \phi_j (\sqrt{A_V}) f
\quad \text{in } L^2 (\Omega) 
\end{equation}
for any $f \in L^2 (\Omega)$. 
Put 
\begin{equation}\label{EQ:g-n}
g _L := 
\int_{-\infty} ^{ \infty } 
     \Big( 1 - \sum _{j \geq L} \phi_j (\sqrt{\lambda}) \Big) 
  d E_{A_V} (\lambda) f. 
\end{equation}
It is readily checked that $\{ g_L \}$ is a Cauchy sequence in $L^2 (\Omega)$, 
so we put 
$$
  g := \lim _{L \to - \infty}  g_L
\quad 
\text{ in } L^2 (\Omega).
$$
Noting that $A_V$ is non-negative on $L^2 (\Omega)$ 
and that the support of $1 - \sum _{j \geq L} \phi_j (\sqrt{\lambda})$ is contained in the interval 
$(-\infty, 2^{2L}]$, we find that 
\begin{align*}
\|A_Vg_L\|^2_{L^2}=&
 \int_{-\infty} ^{2^{2L}} 
  \Big|\lambda \Big( 1 - \sum _{j \geq L} \phi_j (\sqrt{\lambda}) \Big)\Big|^2 
  d \|E_{A_V} (\lambda) f \|^2_{L^2}\\
\leq& C 2^{4L} \|f\|^2_{L^2} 
\to 0 
\quad \text{as } L \to - \infty. 
\end{align*}
Hence we deduce that
\[
g \in \mathcal D (A_V), \qquad A_V g = 0 \quad \text{in $L^2(\Omega)$}
\]
by the fact that $g_ L \in \mathcal D (A_V)$, the definition of $g$, 
and the closeness of $A_V$ on $L^2(\Omega)$. 
Since zero is not an eigenvalue of $A_V$ by Lemma~\ref{lem:zero}, 
we conclude 
that 
$g=0$, which proves \eqref{EQ:L2} for any $f \in L^2 (\Omega)$.

Now, as in the previous argument, it is 
sufficient to show that the series in 
the right member of \eqref{EQ:L2} 
is absolutely convergent in 
$\mathcal Z_V(\Omega)$.  
For the series \eqref{EQ:L2} with $ j \geq 1$, the absolute convergence is obtained by 
the same argument as \eqref{1105-8}. 
For the case $j \leq 0$, 
it follows from \eqref{homog} for 
$\alpha = \pm 1$ that 
\begin{equation}\notag 
\begin{split}
q_{V,M} \big( \phi_j (\sqrt{A_V}) f \big) 
\leq C 2^{2j} q_{V,M} \big( A_V ^{-1}\phi_j (\sqrt{A_V}) f \big) 
\leq C 2^{2j} q_{V,M+2} ( f ), 
\end{split}
\end{equation}
which imply that 
\begin{equation}\notag 
\sum _{ j \leq 0} q_{V,M} \big( \phi_j (\sqrt{A_V}) f \big) 
\leq C q_{V,M+2} (f) \sum _{ j \leq 0 } 2^{2j} 
< \infty 
\end{equation}
for all $M \in \mathbb N$. 
Therefore, \eqref{907-2} is verified {for $f\in \mathcal{Z}_V(\Omega)$}.

Finally, as to 
the identity \eqref{907-2} 
for $f \in \mathcal Z'_V(\Omega)$, 
we proceed the analogous argument to 
that with replacing the assertion {\rm (i)} 
for $p_{V,M}$ and Lemma \ref{lem:map_M} (i)
by $q_{V,M}$ and Lemma 
\ref{lem:map_M} (ii), respectively. 
The proof of $\phi_j (\sqrt{A_V}) f \in L^\infty (\Omega)$ also follows 
from the analogous argument to 
that of 
the assertion {\rm (i)} as above. So we 
may omit the details. 
The proof of Lemma \ref{lem:decomposition1} is complete.  
\end{proof}

As a consequence of  
Lemmas \ref{lem:calc} and \ref{lem:decomposition1}, we have: 
\begin{lem}\label{cor:1}
The following inclusion relations hold{\rm :}
\begin{gather}
\label{0204-1}
\mathcal X_V (\Omega)  
\subset L^1 (\Omega) \cap L^\infty (\Omega) ,
\\
\label{0204-2}
L^p (\Omega) 
\subset \mathcal X_V ' (\Omega) 
\quad \text{for any }  1 \leq p \leq \infty . 
\end{gather}
As a consequence, we have 
\begin{gather}
\label{0204-3}
\mathcal Z_V (\Omega)  
\subset L^1 (\Omega) \cap L^\infty (\Omega) ,
\\
\label{0204-4}
L^p (\Omega) 
\subset \mathcal Z_V ' (\Omega) 
\quad \text{for any }  1 \leq p \leq \infty . 
\end{gather}

\end{lem}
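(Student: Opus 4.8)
The plan is to establish the two inclusions \eqref{0204-1} and \eqref{0204-2} first; the remaining two, \eqref{0204-3} and \eqref{0204-4}, then follow at once from $\mathcal Z_V(\Omega)\subset\mathcal X_V(\Omega)$ together with the elementary inequality $q_{V,M}(g)\ge p_{V,M}(g)$ (the supremum in \eqref{EQ:qV} runs over all $j\in\mathbb Z$, in particular over $j\in\mathbb N$). For \eqref{0204-1}, the inclusion $\mathcal X_V(\Omega)\subset L^1(\Omega)$ is built into Definition \ref{def:1}, so the real content is the quantitative bound
\[
\|f\|_{L^\infty}\le C\,p_{V,M}(f),\qquad f\in\mathcal X_V(\Omega),
\]
for a fixed $M$ depending only on $n$. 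First I would invoke the decomposition \eqref{907-1} of Lemma \ref{lem:decomposition1}, namely $f=\psi(A_V)f+\sum_{j\in\mathbb N}\phi_j(\sqrt{A_V})f$, which holds in $\mathcal X_V(\Omega)$ and hence in $L^1(\Omega)$. The low-frequency term is controlled by \eqref{orig1} with $m=0$, $r=1$, $p=\infty$, giving $\|\psi(A_V)f\|_{L^\infty}\le C\|f\|_{L^1}\le C\,p_{V,M}(f)$. For the dyadic pieces the idea is to trade spatial integrability for summability in $j$: choose $\widetilde\phi_0\in C_0^\infty((0,\infty))$ with $\widetilde\phi_0\equiv 1$ on $\supp\phi_0$, so that $\phi_j(\sqrt{A_V})=\widetilde\phi_0(2^{-j}\sqrt{A_V})\,\phi_j(\sqrt{A_V})$ by the functional calculus, and apply \eqref{inhom} with the function $\widetilde\phi_0$, $\alpha=0$, $r=1$, $p=\infty$ to the element $\phi_j(\sqrt{A_V})f\in L^1(\Omega)$. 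This yields $\|\phi_j(\sqrt{A_V})f\|_{L^\infty}\le C2^{nj}\|\phi_j(\sqrt{A_V})f\|_{L^1}\le C2^{(n-M)j}p_{V,M}(f)$. Taking $M>n$ makes $\sum_{j\in\mathbb N}2^{(n-M)j}<\infty$, so $\sum_j\phi_j(\sqrt{A_V})f$ converges absolutely in $L^\infty(\Omega)$; since it also converges to $f-\psi(A_V)f$ in $L^1(\Omega)$, the two limits coincide almost everywhere on $\Omega$ (both convergences force $L^1$-convergence on every subset of finite measure, and $\Omega$ is a countable union of such sets), and summing the estimates proves the displayed bound.

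For \eqref{0204-2}, let $f\in L^p(\Omega)$ with $1\le p\le\infty$ and identify $f$ with the functional $g\mapsto\int_\Omega f\overline{g}\,dx$ on $\mathcal X_V(\Omega)$ as in Definition \ref{def:2}. By \eqref{0204-1} and the log-convexity of the $L^r$-norms, every $g\in\mathcal X_V(\Omega)$ lies in $L^{p'}(\Omega)$ with $\|g\|_{L^{p'}}\le\|g\|_{L^1}^{1/p'}\|g\|_{L^\infty}^{1-1/p'}\le C\,p_{V,M}(g)$, using the $L^\infty$-estimate just obtained together with $\|g\|_{L^1}\le p_{V,M}(g)$. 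Hence $fg\in L^1(\Omega)$ by Hölder's inequality and
\[
\bigl|{}_{\mathcal X'_V}\langle f,g\rangle_{\mathcal X_V}\bigr|\le\|f\|_{L^p}\|g\|_{L^{p'}}\le C\|f\|_{L^p}\,p_{V,M}(g),
\]
so $f$ defines a continuous linear functional on $\mathcal X_V(\Omega)$, i.e. $f\in\mathcal X_V'(\Omega)$. The same Hölder estimate with $p_{V,M}$ replaced by $q_{V,M}$ yields \eqref{0204-4}, and \eqref{0204-3} is immediate from \eqref{0204-1}.

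The step I expect to be the main obstacle is the Bernstein-type inequality $\|\phi_j(\sqrt{A_V})f\|_{L^\infty}\le C2^{nj}\|\phi_j(\sqrt{A_V})f\|_{L^1}$, and more precisely the passage from the $L^2$-level decomposition of Lemma \ref{lem:decomposition1} to genuine $L^\infty$ control with a coefficient summable in $j$: this is exactly where the auxiliary cutoff $\widetilde\phi_0$ and the sharp scaling in \eqref{inhom} of Lemma \ref{lem:calc} are used, and also where one has to be careful that the $L^\infty$-series and the $L^1$-series have the same sum. Everything after that — Hölder's inequality, interpolation between $L^1$ and $L^\infty$, and the comparison $q_{V,M}\ge p_{V,M}$ — is routine.
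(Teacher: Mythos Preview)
Your proof is correct and follows essentially the same route as the paper's: use the decomposition \eqref{907-1}, handle $\psi(A_V)f$ via \eqref{orig1}, and get the Bernstein estimate $\|\phi_j(\sqrt{A_V})f\|_{L^\infty}\le C2^{nj}\|\phi_j(\sqrt{A_V})f\|_{L^1}$ by inserting an auxiliary cutoff and applying \eqref{inhom} (the paper takes $\Phi_j=\phi_{j-1}+\phi_j+\phi_{j+1}$ in place of your $\widetilde\phi_0(2^{-j}\cdot)$, and arrives at $M=n+1$), then deduce \eqref{0204-2} by H\"older and $L^1$--$L^\infty$ interpolation. Your added remark about matching the $L^1$- and $L^\infty$-limits is a nice point of rigor that the paper leaves implicit.
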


\begin{proof}
[\bf Proof] 
Once \eqref{0204-1} and \eqref{0204-2} are proved, 
\eqref{0204-3} and \eqref{0204-4} hold, since 
$$
\mathcal Z_V (\Omega) 
\subset \mathcal X_V (\Omega) 
\quad \text{and} \quad 
 \mathcal X_V' (\Omega)
\subset \mathcal Z_V' (\Omega) . 
$$

We show the inclusion relation \eqref{0204-1}. 
Put  
$$
\Phi_j := \phi_{j-1} + \phi_j + \phi_{j+1}. 
$$
Let $f \in \mathcal X_V (\Omega)$. Then it follows from 
the definition of semi-norms $p_{V,M} (\cdot)$ that 
$$
\| f \|_{L^1 } \leq p_{V,0} (f) .
$$
As to the $L^\infty$-norm, we deduce from 
the identities \eqref{907-1}, 
$\phi_j = \Phi_j \phi_j$
and the estimate \eqref{inhom} for $\alpha = 0$ that 
\begin{gather}\notag
\begin{split}
\| f \|_{L^\infty} 
\leq 
& \| \psi (A_V) f \|_{L^\infty} 
  + \sum _{ j \in \mathbb N} 
    \|  \Phi_j (\sqrt{A_V} ) \phi_j (\sqrt{A_V} ) f \|_{L^\infty}
\\
\leq 
& C \| f \|_{L^1} 
  + C \sum _{ j \in \mathbb N} 2^{-j} \cdot 2^j
    2^{nj} \| \phi_j (\sqrt{A_V} ) f \|_{L^1}
\\
\leq 
& C p_{V,0} (f) 
  + C \sum _{j \in \mathbb N} 2^{-j} 
     \sup _{ k \in \mathbb N} 2^{(n+1) k} 
     \| \phi_k (\sqrt{A_V} ) f \|_{L^1}
\\
\leq 
& C p_{V,n+1} (f) . 
\end{split}
\end{gather}
Summarizing the above estimates now, 
we conclude the inclusion relation \eqref{0204-1}. 

Finally, we prove the inclusion relation \eqref{0204-2}. 
Let $f \in L^p (\Omega)$ and $g \in \mathcal X_V (\Omega)$.  
Then it follows from H\"older's inequality and the above two estimates that 
\begin{equation}\notag 
\begin{split}
\int_{\Omega} | f(x) g(x)| dx 
\leq 
& 
\| f \|_{L^p} \| g \|_{L^{p'}}
\\
\leq 
& 
\| f \|_{L^p} \| g \|_{L^1 \cap L^\infty} 
\\
\leq 
& 
C \| f \|_{L^p} p_{V,n+1} (g),  
\end{split}
\end{equation}
where $p'$ is the conjugate exponent of $p$. 
This estimate means that $f \in L^p (\Omega)$ belongs to 
$\mathcal X_V ' (\Omega)$. 
Hence we conclude \eqref{0204-2}. 
The proof of Lemma \ref{cor:1} is complete. 
\end{proof}

\section{Proof of Theorem \ref{thm:1}} 
In this section we prove Theorem \ref{thm:1}.


\begin{proof}[\bf Proof of independence of the choice of $\psi$ and $\{\phi_j\}$]  
 The proof of the independence in {\rm (i-a)} and {\rm (ii-a) is similar 
 to that of Triebel \cite{Triebel_1983}. }
As to {\rm (i-a)}, 
let us take $\psi = \psi ^{(k)}$, $\phi_j = \phi_j ^{(k)}$ 
($k = 1,2$) satisfying \eqref{917-1}, 
\eqref{917-2} and \eqref{917-3}.
Since $\psi ^{(1)}$ and $\phi_j ^{(1)}$ satisfy 
\begin{gather}\notag 
\psi^{(1)} = \psi ^{(1)} \big( \psi ^{(2)} + \phi_1 ^{(2)} \big), 
\quad 
\phi_1^{(1)} = \phi_1 ^{(1)} \big( \psi ^{(2)} + \phi_1 ^{(2)} + \phi_2 ^{(2)} \big), 
\\ \label{1118-1}
\quad 
\phi^{(1)} _j  = \phi^{(1)} _j \big( \phi^{(2)} _{j-1} + \phi^{(2)} _j + \phi^{(2)} _{j+1} \big) 
\quad \text{ for } j = 2,3,\cdots, 
\end{gather}
it follows from \eqref{orig1} and \eqref{inhom} 
in Lemma \ref{lem:calc} that 
\begin{gather}\notag 
\| \psi^{(1)}(A_V) f \|_{L^p} + \| \phi_1^{(1)}(\sqrt{A_V}) f \|_{L^p} 
\leq C\Big\{ \| \psi^{(2)}(A_V) f \|_{L^p} 
   + \sum _{k=1}^2\| \phi_k ^{(2)} (\sqrt{A_V}) f \|_{L^p}\Big\},
\\ \notag 
\| \phi_j ^{(1)} (\sqrt{A_V}) f \|_{L^p} 
\leq C \sum _{k = -1}^{1} \| \phi_{j+k} ^{(2)} (\sqrt{A_V}) f \|_{L^p} 
\quad \text{ for } j = 2,3,\cdots,
\end{gather}
which imply that
\begin{equation}\notag 
\begin{split}
& 
\| \psi^{(1)}(A_V) f \|_{L^p} 
+  \big\| \big\{ 2^{sj} \| \phi_j ^{(1)} (\sqrt{A_V}) f \|_{L^p} \big\}_{j \in \mathbb N} 
    \big\|_{\ell^q (\mathbb N)}
\\
\leq & 
   C\Big\{ \| \psi^{(2)}(A_V) f \|_{L^p} 
+  \big\| \big\{ 2^{sj} \| \phi_j ^{(2)} (\sqrt{A_V}) f \|_{L^p} \big\}_{j \in \mathbb N} 
    \big\|_{\ell^q (\mathbb N)}\Big\}.
\end{split}
\end{equation}
This proves the independence in {\rm (i-1)} 
for the inhomogeneous Besov spaces. 

As to {\rm (ii-a)}, 
we use the identity 
\eqref{1118-1} for all $j \in \mathbb Z$ 
and apply \eqref{homog} for $\alpha=0$ in Lemma \ref{lem:calc} to get 
\[
\big\| \big\{ 2^{sj} \| \phi_j ^{(1)} (\sqrt{A_V}) f \|_{L^p} \big\}_{j \in \mathbb Z} 
    \big\|_{\ell^q (\mathbb Z)}
\leq 
   C\Big\{ \big\| \big\{ 2^{sj} \| \phi_j ^{(2)} (\sqrt{A_V}) f \|_{L^p} \big\}_{j \in \mathbb Z} 
    \big\|_{\ell^q (\mathbb Z)}\Big\}.
\]
This ends the proof of 
the required independence of the choice of 
$\psi$ and $\{\phi_j\}$.
\end{proof}

\begin{proof}
[\bf Proof of inclusion relations \eqref{902-1} and \eqref{902-2}] 
Let $p'$ and $q'$ be such that $1/p + 1/p' = 1$ and $1/q + 1/q' = 1$. 
First we prove the embedding 
\eqref{902-1}, namely,  
$$
\mathcal X_V (\Omega) \hookrightarrow B^s_{p,q} (A_V)
\hookrightarrow \mathcal X_V ' (\Omega).
$$ 
Take $\Psi$ and $\Phi_j$ such that 
$$
\Psi := \psi + \phi_1, 
\quad 
\Phi_1 := \psi + \phi_1 + \phi_2, 
\quad 
\Phi_j := \phi_{j-1} + \phi_j + \phi_{j+1} 
\text{ for } j = 2,3,\cdots.
$$
Let $M \in \mathbb N$ be such that 
$M > s + n(1-1/p)$. 
Then, for any $f \in \mathcal X_V(\Omega)$, 
we deduce from the identities  
$\phi_j = \Phi_j \phi_j$ and the estimate \eqref{inhom} for $\alpha = 0$ 
in Lemma \ref{lem:calc} that 
\begin{equation}\notag 
\begin{split}
\| f \|_{B^s_{p,q}(A_V)} 
& 
= \| \psi (A_V) f \|_{L^p} 
  + \Big\{ \sum _{ j \in \mathbb N} 
        \Big( 2^{sj} \| \Phi_j (\sqrt{A_V})\phi_j (\sqrt{A_V} ) f \|_{L^p}
        \Big)^q
    \Big\}^{\frac{1}{q}}
\\
& 
\leq C \| f \|_{L^1} 
  + C \Big\{ \sum _{ j \in \mathbb N} 
        \Big( 2^{sj} 2^{n(1-\frac{1}{p})j} 2^{-Mj} 
              \cdot 2^{Mj} \| \phi_j (\sqrt{A_V} ) f \|_{L^1}
        \Big)^q
    \Big\}^{\frac{1}{q}}
\\
& 
\leq C p_{V,M} (f) 
  + C \Big\{ \sum _{ j \in \mathbb N} 
        \Big( 2^{sj} 2^{n(1-\frac{1}{p})j} 2^{-Mj} 
        \Big)^q
    \Big\}^{\frac{1}{q}} 
    p_{V, M}  (f) 
\\
& 
\leq C p_{V,M}(f) 
\end{split}
\end{equation}
for any $f \in \mathcal X_V(\Omega)$. Thus we get the first embedding:
\begin{equation}\label{EQ:X-emb}
\mathcal X_V(\Omega) \hookrightarrow B^s_{p,q} (A_V).
\end{equation}
To prove the second embedding 
\begin{equation}\label{EQ:second}
B^s_{p,q} (A_V) \hookrightarrow \mathcal X'_V(\Omega),
\end{equation}
we take $M' \in \mathbb N$ such that 
$M' > -s + n(1-1/p')$. 
Applying Lemma \ref{lem:decomposition1} {\rm (i)}, 
the identities $\psi = \Psi \psi$, $\phi_j = \Phi_j \phi_j$, 
H\"older's inequality and 
the embedding \eqref{EQ:X-emb} 
for $s,p,q$ replaced by $-s, p^\prime,q^\prime$, i.e., 
$$
\mathcal X_V(\Omega) \hookrightarrow B^{-s}_{p',q'} (A_V), 
$$
we have, for 
 $f \in B^s_{p,q} (A_V)$ and $g \in \mathcal X_V(\Omega)$
\begin{equation}\notag 
\begin{split}
|_{\mathcal X_V'} \langle f, g\rangle_{\mathcal X_V} | 
& 
= 
\Big| \,_{\mathcal X'_V} \big\langle \psi(A_V) f, \Psi(A_V) g \big\rangle_{\mathcal X_V}  
+ \sum _{ j \geq 1} 
 \, _{\mathcal X_V'} \big\langle \phi_j (\sqrt{A_V}) f, \Phi_j (\sqrt{A_V}) g \big\rangle_{\mathcal X_V} 
 \Big| 
\\
& 
\leq \| \psi (A_V) f \|_{L^p} \| \Psi (A_V) g \|_{L^{p'}}   
\\
& \quad 
  + \big\| \big\{ 2^{sj} \| \phi_j (\sqrt{A_V}) f \|_{L^p} \big\}_{j \in \mathbb N} \big\|_{\ell^q (\mathbb N)}
    \big\| \big\{ 2^{-sj} \| \Phi_j (\sqrt{A_V}) g \|_{L^{p'}} \big\}_{j \in \mathbb N} \big\|_{\ell^{q'} (\mathbb N)}
\\
& 
\leq C \| f \|_{B^s_{p,q}(A_V)} \| g \|_{B^{-s}_{p' , q'}(A_V)}
\\
& 
\leq C \| f \|_{B^s_{p,q} (A_V)} p_{M'} (g). 
\end{split}
\end{equation}
Therefore, \eqref{EQ:second} is proved, and as a result, we get the embedding \eqref{902-1}. \\

Next we show the embedding \eqref{902-2}, namely, 
$$
\mathcal Z_V (\Omega) \hookrightarrow 
\dot B^s_{p,q} (A_V) \hookrightarrow 
\mathcal Z_V '(\Omega).
$$ 
Put 
$$
\Phi _j := \phi_{j-1} + \phi_j + \phi_{j+1} 
\quad \text{for all } j \in \mathbb Z . 
$$
Let $L \in \mathbb N$ be such that 
$L > |s| + n (1- 1/p)$. 
For any $f \in \mathcal Z (\Omega)$, 
we deduce from the identity $\phi_j = \Phi_j \phi_j$ and 
the estimate \eqref{homog} for $\alpha = 0$ that 
\begin{equation}\notag 
\begin{split}
\| f \|_{\dot B^s_{p,q}(A_V)} 
& 
=    \Big\{ \sum _{ j \in \mathbb Z} 
          \Big( 2^{sj} \| \Phi_j (\sqrt{A_V})\phi_j (\sqrt{A_V}) f \|_{L^p}
          \Big) ^q
     \Big\} ^{\frac{1}{q}}
\\
& 
\leq C \Big\{ \Big( \sum _{ j \leq 0} + \sum _{j \geq 1} \Big) 
          \Big( 2^{sj} 2^{n(1-\frac{1}{p})j} \| \phi_j (\sqrt{A_V}) f \|_{L^1}
          \Big) ^q
     \Big\} ^{\frac{1}{q}}
\\
& 
\leq C \Big( \sup _{ j \leq 0} 2^{-Lj} \| \phi_j (\sqrt{A_V}) f \|_{L^1} \Big) 
     \Big\{ \sum _{j \leq 0} 
          \Big( 2^{sj} 2^{n(1-\frac{1}{p})j} 2^{Lj} 
          \Big) ^q
     \Big\} ^{\frac{1}{q}}
\\
& 
\quad + C  
\Big( \sup _{ j \geq 1} 2^{Lj} \| \phi_j (\sqrt{A_V}) f \|_{L^1} \Big) 
     \Big\{ \sum _{j \geq 1} 
          \Big( 2^{sj} 2^{n(1-\frac{1}{p})j} 2^{-Lj} 
          \Big) ^q
     \Big\} ^{\frac{1}{q}}
\\
& 
\leq C q_{V,L} (f), 
\end{split}
\end{equation}
which implies that
\begin{equation}\label{EQ:Z-emb}
\mathcal Z_V(\Omega) \hookrightarrow \dot B^s_{p,q} (A_V).
\end{equation}
To prove the second embedding 
$$
\dot B^s_{p,q} (A_V) \hookrightarrow 
\mathcal Z_V '(\Omega),
$$ 
we take $L' \in \mathbb N$ such that 
$L' > |s| + n (1- 1/p')$. 
For any $f \in \dot B^s_{p,q} (A_V)$ and $g \in \mathcal Z_V(\Omega)$, 
using the identities $\phi_j = \Phi_j \phi_j$, 
H\"older's inequality and 
the embedding \eqref{EQ:Z-emb} for 
$s,p,q$ replaced by $-s,p^\prime,q^\prime$, i.e., 
$$
\mathcal Z_V(\Omega) \hookrightarrow \dot B^{-s}_{p',q'} (A_V),
$$ 
we estimate 
\begin{equation}\notag 
\begin{split}
|_{\mathcal Z_V'} \langle f, g\rangle_{\mathcal Z_V} | 
& 
= 
\Big| \sum _{ j \in \mathbb Z} 
 \, _{\mathcal Z_V'} \big\langle \phi_j (\sqrt{A_V}) f, \Phi_j (\sqrt{A_V}) g \big\rangle_{\mathcal Z_V} 
 \Big| 
\\
& 
\leq 
    \big\| \big\{ 2^{sj} \| \phi_j (\sqrt{A_V}) f \|_{L^p} \big\}_{j \in \mathbb Z} \big\|_{\ell^q (\mathbb Z)}
    \big\| \big\{ 2^{-sj} \| \Phi_j (\sqrt{A_V}) g \|_{L^{p'}} \big\}_{j \in \mathbb Z} \big\|_{\ell^{q'} (\mathbb Z)}
\\
& 
\leq C \| f \|_{\dot B^s_{p,q}(A_V)} \| g \|_{\dot B^{-s}_{p' , q'}(A_V)}
\\
& 
\leq C \| f \|_{\dot B^s_{p,q}(A_V)} q_{L'} (g). 
\end{split}
\end{equation}
Thus we conclude \eqref{902-2}.
\end{proof}

It remains to show that $B^s_{p,q } (A_V)$ and $\dot B^s_{p,q} (A_V)$ 
are Banach spaces. It is easy to check that they are normed vector spaces, 
and hence, it suffices to prove the completeness. 

\begin{proof}[{\bf Proof of the completeness of $B^s_{p,q} (A_V)$ and $\dot B^s_{p,q} (A_V)$. }] 
We have only to prove the completeness 
of the homogeneous Besov spaces $\dot B^s_{p,q} (A_V)$, 
since the inhomogeneous case is similar. 
The proof is done by the analogous argument to that by 
Triebel \cite{Triebel_1983}. 
Indeed, 
let $\{ f_N \}_{N=1}^\infty$ be a Cauchy sequence in $\dot B^s_{p,q} (A_V)$. 
We may assume that 
\begin{equation}\label{EQ:sub}
\| f_{N+1} - f_N\|_{\dot B^s_{p,q} (A_V)} \leq 2^{-N}
\end{equation}
without loss of generality. 
Then $\{ f_N \}_{N=1}^\infty$ is also a Cauchy sequence in 
$\mathcal Z_V '(\Omega)$ by the inclusion relation \eqref{902-2}, 
and hence, there exists an element $f \in \mathcal Z_V '(\Omega)$ with the property that
\begin{equation*} 
f_N \to f  \quad \text{in } \mathcal Z_V '(\Omega)
\quad \text{as } N \to \infty,
\end{equation*}
since $\mathcal Z_V '(\Omega)$ is complete. 
This together with the boundedness of $\phi_j (\sqrt{A_V})$ 
on $\mathcal Z'_V(\Omega)$ imply that 
\begin{equation}\label{1116-1}
\phi_j (\sqrt {A_V}) f_N \to \phi_j (\sqrt {A_V}) f \quad \text{ in } 
\mathcal{Z}^\prime_V (\Omega)
\text{ as } N \to \infty , 
\end{equation}
and we have $\phi_j (\sqrt{A_V}) f \in L^\infty (\Omega)$ by Lemma~\ref{lem:decomposition1} {\rm (ii)}. 
Furthermore, fixing $j\in \mathbb{Z}$, 
we see that $\{\phi_j (\sqrt {A_V} \, ) f_N\}_{N=1}^\infty$ is also a Cauchy sequence in $L^p(\Omega)$, 
and there exists $F_j \in L^p (\Omega)$ such that 
$$
\phi_j (\sqrt {A_V} \, ) f_N \to F_j 
\quad \text{in } L^p (\Omega) 
\text{ as } N \to \infty ,
$$
which implies that 
\begin{gather}
\notag 
F_j(x) = \phi_j (\sqrt{A_V}) f(x) \quad 
\text{almost every } x \in \Omega, 
\end{gather}
and the convergence \eqref{1116-1} also holds in the topology of 
$L^p (\Omega)$.

It remains to show that  
$f \in \dot B^s_{p,q} (A_V) $ 
and $f_N$ tends to $f$ in $\dot B^s_{p,q} (A_V)$ 
for the above $f \in \mathcal Z_V' (\Omega) $.  
Since $ \big\{ \{ 2^{sj } \| \phi_j (\sqrt {A_V}) f_N \|_{L^p} 
   \}_{j \in \mathbb Z} \big\}_{N=1}^\infty$ 
is a Cauchy sequence in $\ell ^q (\mathbb Z)$ and 
$$
 2^{sj } \| \phi_j (\sqrt {A_V}) f_N \|_{L^p} 
 \to  2^{sj } \| \phi_j (\sqrt {A_V}) f \|_{L^p}  \quad \text{as } N\to \infty, 
$$
we get 
$$
\| f \|_{\dot B^s_{p,q}(A_V)} < \infty, 
$$
and hence, 
$$
f \in \dot B^s_{p,q} (A_V). 
$$
For the convergence of $f_N$ to $f$, 
writing 
$$
f 
=\sum _{k = 1} ^\infty (f_k - f_{k-1})
=
\lim _{ N \to \infty}f_N 
\quad \text{in } \mathcal Z'_{V}(\Omega), 
$$
where $f_{0} = 0$, 
we conclude 
from \eqref{EQ:sub} that 
the above series converges absolutely in the topology of $\dot B^s_{p,q} (A_V)$. 
Thus the completeness of $\dot B^s_{p,q} (A_V)$ is proved. The proof of Theorem \ref{thm:1} is now finished. 
\end{proof}

\section{Proof of Proposition \ref{thm:2}}

In this section we prove Proposition \ref{thm:2}.
We treat only the homogeneous Besov spaces 
$\dot B^s_{p,q} (A_V)$,
since the inhomogeneous case follows analogously. 
We prove that 
\begin{equation}\label{EQ:first inc}
\dot B^s_{p,q} (A_V) ^* = \dot B^{-s}_{p',q'} (A_V) 
\end{equation} 
for any $s\in \mathbb{R}$ and $1\leq p,q < \infty$. 
Let us first show that 
\begin{equation}\label{EQ:B-inc1}
\dot B^{-s}_{p',q'} (A_V)
\hookrightarrow
\dot B^s_{p ,q} (A_V) ^*.
\end{equation}
Let $\{ \phi_j \}_{j \in \mathbb Z}$ be as in \eqref{917-2} 
and put 
$$ 
  \Phi_j := \phi_{j-1} + \phi_j + \phi_{j+1} 
 \quad \text{for } j \in \mathbb Z .
$$ 
For any $f \in \dot B^{-s}_{p',q'} (A_V)$, 
we define an operator $T_f$ as
$$
T_f g
:= \sum _{ j \in \mathbb Z} 
  \int_{\Omega} \Big(\phi_j (\sqrt {A_V}) f\Big)  \,\,
       \overline{ \Phi_j (\sqrt{A_V} ) g} \, dx  
    \quad \text{for } g \in \dot B^s_{p,q} (A_V).
$$
Then 
\begin{equation}\notag 
\begin{split}
|T_f g| 
& 
\leq 
\big\| \{ 2^{-sj} \| \phi_j (\sqrt {A_V}) f \|_{L^{p'}} \}_{j \in \mathbb Z}
\big\|_{\ell^{q'} (\mathbb Z)}
\big\| \{ 2^{sj} \| \Phi_j (\sqrt {A_V}) g \|_{L^p} \}_{j \in \mathbb Z}
\big\|_{\ell^q (\mathbb Z)}
\\
& 
\leq C \| f \|_{\dot B^{-s}_{p',q'}(A_V)} \| g \|_{\dot B^s_{p,q}(A_V)},
\end{split}
\end{equation}
which implies that
the operator norm 
$\| T_f  \|_{ \dot B^s_{p,q} (A_V) ^*} $
is bounded by $C\| f \|_{\dot B^{-s}_{p',q'} (A_V)}$. 
This proves 
the inclusion \eqref{EQ:B-inc1}.

We prove the converse inclusion:
\begin{equation}\label{EQ:B-inc2}
\dot B^s_{p ,q} (A_V) ^*
\hookrightarrow
\dot B^{-s}_{p',q'} (A_V).
\end{equation}
Let $F \in  \dot B^s_{p,q} (A_V) ^*$. We define an operator 
$$T : \ell^q ( \mathbb Z \, ; L^p (\Omega) ) \to \mathbb C
$$ 
as follows. 
For $G = \{ G_j \}_{j \in \mathbb Z}
\in \ell^q ( \mathbb Z \, ; L^p (\Omega) )$, 
we put 
$$
T (G) 
:= F\Big(\sum _{j \in \mathbb Z} 2^{-sj} \phi_j (\sqrt{A_V}) G_j \Big). 
$$
Here we estimate 
\begin{equation}\notag 
\begin{split}
& 
\Big\| \sum _{j \in \mathbb Z} 2^{-sj} \phi_j (\sqrt{A_V}) G_j 
    \Big\|_{\dot B^s_{p,q} (A_V)}
\\
=
& 
 \Big\{ \sum _{ k \in \mathbb Z} 
        \Big( 2^{sk} 
              \Big\| \phi_k (\sqrt{A_V}) \sum _{j = k-1 }^{k+1}  2^{-sj} \phi_{j} (\sqrt{A_V}) G_j 
              \Big\|_{L^p}
        \Big)^q 
  \Big\} ^{\frac{1}{q}}
\\
=
& 
 \Big\{ \sum _{ k \in \mathbb Z} 
        \Big( 2^{sk} 
              \Big\| \phi_k (\sqrt{A_V}) \sum _{r = -1 }^{1}  2^{-s(k+r)} \phi_{k+r} (\sqrt{A_V}) G_{k+r} 
              \Big\|_{L^p}
        \Big)^q 
  \Big\} ^{\frac{1}{q}}
\\
\leq
& 
C\sum _{r = -1 }^{1} 2^{-sr}
 \Big\{ \sum _{ k \in \mathbb Z} 
              \| G_k \|_{L^p} ^q 
  \Big\} ^{\frac{1}{q}}
\\
\leq 
& C \| G \|_{\ell^q L^p} ,
\end{split}
\end{equation}
where we used the estimate \eqref{homog} for $\alpha = 0$ in Lemma \ref{lem:calc}. 
Hence we deduce that
\begin{align*} 
| T(G) | 
\leq& \| F \|_{\dot B^s_{p,q} (A_V) ^*}
    \Big\| \sum _{j \in \mathbb Z} 2^{-sj} \phi_j (\sqrt{A_V}) G_j 
    \Big\|_{\dot B^s_{p,q} (A_V)}
\\
\leq& C \| F \|_{\dot B^s_{p,q} (A_V) ^*}
    \| G \|_{\ell ^q L^p}. 
\nonumber
\end{align*}
Since $(\ell ^q L^p)^* = \ell^{q'} L^{p'}$, 
there exists $\{ F_j \}_{j \in \mathbb Z} \in \ell ^{q'} L^{p'}$ such that 
\begin{equation}\label{821-1}
T(G) = 
\sum _{ j \in \mathbb Z} \int_{\Omega} F_j (x) \overline{G_j (x)} \, dx 
\quad \text{and} \quad 
\| \{ F_j \}_{j \in \mathbb Z} \|_{\ell ^{q' } L^{p'}} 
\leq C \| F \|_{\dot B^s_{p,q} (A_V)^*} .
\end{equation}
Then for any $g \in \dot B^s_{p,q} (A_V)$, 
let us take $G = \{ G_j \}_{j \in \mathbb Z}$ as 
$$
G _j = 2^{sj} \Phi_j (\sqrt{A_V}) g. 
$$
It follows from $g \in \mathcal Z_V ' (\Omega)$, Lemma~\ref{lem:decomposition1} {\rm (ii)} and 
the identities $\phi_j = \phi_j \Phi_j$ that
\begin{equation}\notag 
\begin{split}
F (g) 
& = F \Big( \sum _{ j \in \mathbb Z} 2^{-sj} \phi_j (\sqrt{A_V} ) 
            \big( 2^{sj}\Phi_j (\sqrt{A_V}) g\big) \Big)
\\
& 
=  T(G) 
\\
& 
= \sum _{ j \in \mathbb Z} \int_{\Omega} F_j (x) \overline{G_j (x)}\,dx
\\
& 
= \sum_{j \in \mathbb Z} \int_{\Omega} 
     F_j(x)  \overline{ 2^{sj}\Phi_j(\sqrt{A_V}) g} \, dx
\\
& 
= \sum_{j \in \mathbb Z} \int_{\Omega} 
    \Big( 2^{sj} \Phi_j(\sqrt{A_V}) F_j(x) \Big) \overline{g} \, dx .
\end{split}
\end{equation}
Taking $f$ as 
$$
f = \sum_{j \in \mathbb Z} 2^{sj} \Phi_j(\sqrt{A_V}) F_j , 
$$
we deduce from \eqref{821-1} that 
\begin{equation}\notag 
\begin{split}
\| f \|_{\dot B^{-s}_{p',q'}(A_V)} 
 \leq 
 & 
 C \| \{ F_j \}_{j \in \mathbb Z} \|_{\ell^{q'}L^{p'}} 
\\
\leq 
& 
C \| F \|_{\dot B^s_{p,q} (A_V) ^*}, 
\end{split}
\end{equation}
which implies that
$f \in \dot B^{-s}_{p',q'} (A_V)$. 
Hence $F$ is regarded as an element in $\dot B^{-s}_{p',q'} (A_V)$, 
and we get the inclusion
\eqref{EQ:B-inc2}; thus we 
conclude the isomorphism \eqref{EQ:first inc}. 
This ends the proof of Proposition \ref{thm:2}.

\section{Proof of Proposition \ref{thm:3}}

In this section we prove Proposition \ref{thm:3}. 
The embedding relations are immediate consequences of Lemma~\ref{lem:calc}. 
The main point is to prove the lifting properties. 

\vskip3mm 

First we prove the homogeneous case, namely, 
$$
A_V ^{s_0/2} f \in \dot B^{s-s_0}_{p,q} (A_V)
\quad \text{for any } f \in \dot B^s_{p,q} (A_V) . 
$$
To begin with, we show that 
\begin{equation}\label{0204-5}
A_V ^{s_0/2} 
\text{ is a continuous operator from }
\mathcal Z_V '(\Omega)
\text{ to itself.}
\end{equation}
By the definition \eqref{901-5}, it is sufficient to verify 
that $A_V ^{s_0 /2}$ is the continuous operator from $\mathcal Z_V (\Omega)$ to itself. 
Let us take 
$M_0 \in \mathbb N$ such that $M_0 > |s_0|$. 
It follows from \eqref{homog} 
for $\alpha = s_0/2$ and \eqref{907-2} that 
\begin{equation}\notag 
q_{V,M} \big( A_V ^{s_0 /2} g \big) 
\leq C q_{V,M+M_0} (g) 
\end{equation}
for any $g \in \mathcal Z_V (\Omega)$, 
which implies that $A_V ^{s_0 /2} g \in \mathcal Z_V (\Omega)$.
 This proves \eqref{0204-5}. 
Hence, all we have to do is to prove that 
$f \in \dot B^s_{p,q} (A_V) $ satisfies 
\begin{equation} \label{EQ:Required} 
\| A_V ^{s_0 /2} f \|_{\dot B^{s-s_0}_{p,q} (A_V)} 
\leq C \| f \|_{\dot B^s_{p,q} (A_V)}. 
\end{equation}
In fact, let 
$$
 \Phi _j := \phi_{j-1} + \phi_j + \phi_{j+1}.
$$%
We note that $\Phi_j (\lambda) \lambda ^{s_0}\in C_0 ^\infty ( (0,\infty))$. 
Writing
\[ 
\Phi_j (\lambda) \lambda ^{s_0} = 2^{s_0 j} 
\cdot\Phi_j ( \lambda ) \cdot (2^{-s_0j} \lambda^{s_0}), 
\]
we get 
\begin{align*}
\| \phi_j (\sqrt{A_V})A_V ^{s_0 /2} f \|_{L^p} 
=& 
 2^{s_0 j} \big\| \big\{ \Phi_j (\sqrt{A_V})2^{-s_0 j} A_V ^{s_0 /2} \big\} \phi_j (\sqrt{A_V}) f \big\|_{L^p} 
\\
\leq 
& C  2^{s_0 j} \| \phi_j (\sqrt{A_V}) f \|_{L^p}.
\end{align*}
Hence, multiplying $2^{(s-s_0)j}$ to the above inequality and taking 
the $\ell ^q (\mathbb Z)$-norm, we obtain the required inequality \eqref{EQ:Required}. 

As to inhomogeneous case, 
we have to consider the operators 
$$
(\lambda_0 ^2 + 1 + A_V) ^{s_0 /2} \phi_j (\sqrt{A_V}) . 
$$
The only different point from the homogeneous case is to show the following estimates:
\begin{equation}\label{129-1}
\left\| (\lambda_0 ^2 + 1 + A_V) ^{s_0 /2} \phi_j (\sqrt{A_V})f 
 \right\|_{L^p} 
 \leq C 2^{ s_0 j} \left\| \phi_j (\sqrt{A_V})f  \right\|_{L^p} 
\end{equation}
for any $j \in \mathbb N$. We write 
\begin{equation}\notag 
\begin{split}
(\lambda_0 ^2 + 1 + A_V) ^{s_0 /2}
= 
& \Big[ 2 ^{s_0 j} \big\{ 2^{-2j} ( \lambda_0 ^2 + 1) + 2^{-2j}A_V \big\} ^{s_0 /2} 
  - 2^{s_0 j} \big( 2^{-2j} A_V \big) ^{s_0 /2} 
 \Big] 
\\
& +  2^{s_0 j} \big( 2^{-2j} A_V \big) ^{s_0 /2} 
\\
=: 
& T_{1} + T_{2}. 
\end{split}
\end{equation}
As to $T_2 \phi_j (\sqrt{A_V})f $, 
it follows from \eqref{inhom} for 
$\alpha = s_0 /2$ in Lemma \ref{lem:calc} that 
$$
\| T_2 \phi_j (\sqrt{A_V})f \|_{L^p} 
\leq C 2^{s_0 j} \| \phi_j (\sqrt{A_V})f  \|_{L^p} . 
$$
Writing
\begin{equation}\notag
\begin{split}
T_{1} =
& 
2^{s_0 j} \int _0 ^{2^{-2j}  (\lambda _0^2 + 1)} 
  \partial_\theta(\theta + 2^{-2j} A_V) ^{\frac{s_0}{2} } \,d\theta
\\
=
& 2^{s_0 j} \int _0 ^{2^{-2j} (\lambda _0^2 + 1)} 
  \frac{s_0}{2} (\theta + 2^{-2j} A_V) ^{\frac{s_0}{2} -1} \,d\theta, 
\end{split}
\end{equation}
we estimate $T_{1 } \phi_j (\sqrt{A_V})f$ as 
\begin{equation}\notag
\begin{split}
\left\| T_{1} \phi_j (\sqrt{A_V})f
\right\|_{L^p}
\leq 
 C 2^{s_0 j} \int _0 ^{2^{-2j} (\lambda _0^2 + 1)} 
    \left\| (\theta + 2^{-2j} A_V) ^{\frac{s_0}{2} -1} \phi_j (\sqrt{A_V})f
    \right\|_{L^p} 
    \,d\theta  .
\end{split}
\end{equation}
When $p = 2$, we use the spectral theorem on the Hilbert space $L^2 (\Omega)$ to obtain 
\begin{equation}\notag
\begin{split}
\left\| (\theta + 2^{-2j} A_V) ^{\frac{s_0}{2} -1} \phi_j (\sqrt{A_V})f
    \right\|_{L^2} ^2 
= 
& 
 \int_{2^{2(j-1)}} ^{2^{2(j+1)}} 
   (\theta + 2^{-2j} \lambda ) ^{s_0 -2}
  \, d\left\| E_{A_V} (\lambda) \phi_j (\sqrt{A_V})f \right \|_{L^2} ^2
\\
\leq 
& 
 C \int_{2^{2(j-1)}} ^{2^{2(j+1)}} 
   ( 2^{-2j} \lambda ) ^{s_0 -2}
  \, d\left\| E_{A_V} (\lambda) \phi_j (\sqrt{A_V})f \right \|_{L^2} ^2
\\
\leq 
& C \left\| \phi_j (\sqrt{A_V})f \right \|_{L^2} ^2 ,
\end{split}
\end{equation}
since $j \in \mathbb N$ and $0 \leq \theta \leq 2^{-2j} (\lambda _0^2 + 1)$. 
When $p \not = 2$, we have to
obtain the following estimate: 
\begin{equation}\label{EQ:OB}
\left\| (\theta + 2^{-2j} A_V) ^{\frac{s_0}{2} -1} \phi_j (\sqrt{A_V})f
    \right\|_{L^p} 
\leq C \left\| \phi_j (\sqrt{A_V})f \right \|_{L^p} .
\end{equation}
Since $\theta $ is small compared with the spectrum of $2^{-2j} A_V \phi_j (\sqrt{A_V})$, 
$\theta$ is able to be neglected. 
Hence,  the proof of estimate \eqref{EQ:OB} 
is done by the argument of our paper 
\cite{IMT-preprint}. 
So, we may omit the details. Summarizing the estimates obtained now, we conclude the estimate \eqref{129-1}. 
The proof of Proposition \ref{thm:3} is finished.


\section{ Proofs of Propositions \ref{thm:4} and \ref{thm:7} }

In this section we prove Propositions \ref{thm:4} and \ref{thm:7}.
Let us start by preparing two lemmas. 

\begin{lem}\label{lem:62}
Let $1<p\le 2$. Then there exists a constant 
$C > 0$ such that 
\begin{gather}\label{901-8inhom}
\| f \|_{B^0_{p,2}(A_V)} 
\leq C \| f \|_{L^p} 
    + C \big\| \big\{ \| e^{- 2^{-2j}A_V}  f \|_{L^p } 
          \big\}_{j \in \mathbb N}
   \big\|_{\ell ^2 (\mathbb N)} , 
\\ \label{901-8}
\| f \|_{\dot B^0_{p,2}(A_V)} 
\leq C \big\| \big\{ \| e^{- 2^{-2j}A_V}  f \|_{L^p } 
          \big\}_{j \in \mathbb Z}
   \big\|_{\ell ^2 (\mathbb Z)} 
\end{gather}
for any $f\in C^\infty_0(\Omega)$.
\end{lem}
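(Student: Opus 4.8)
The plan is to deduce both inequalities from a single bound that is uniform in the dyadic index $j$, namely
\[
\| \phi_j(\sqrt{A_V}) f \|_{L^p} \le C\, \| e^{-2^{-2j}A_V} f \|_{L^p} ,
\]
with $C$ independent of $j$; once this is in hand, one squares, sums over $j$, and takes square roots. For \eqref{901-8inhom} one sums over $j \in \mathbb N$ and adds the low-frequency term $\| \psi(A_V) f \|_{L^p}$, while for \eqref{901-8} one sums over $j \in \mathbb Z$.

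To prove the uniform bound I would realise $\phi_j(\sqrt{A_V})$ as a harmless Fourier multiplier (in the $A_V$-calculus) applied to $e^{-2^{-2j}A_V} f$. Put $\eta(t) := \phi_0(t)\, e^{t^2}$. Since $\supp \eta \subset \supp \phi_0 \subset [2^{-1},2]$ we have $\eta \in C_0^\infty((0,\infty))$, and because $\phi_0(2^{-j}\sqrt{\lambda})\, e^{2^{-2j}\lambda} = \eta(2^{-j}\sqrt{\lambda})$ holds on the spectrum of $A_V$, the functional calculus gives
\[
\phi_j(\sqrt{A_V})\, e^{2^{-2j}A_V} = \eta\big(2^{-j}\sqrt{A_V}\big) .
\]
Hence $\phi_j(\sqrt{A_V}) f = \eta(2^{-j}\sqrt{A_V})\, e^{-2^{-2j}A_V} f$, and applying \eqref{inhom} (for $j \in \mathbb N$), resp.\ \eqref{homog} (for $j \in \mathbb Z$), of Lemma~\ref{lem:calc} with $\alpha = 0$ and $r = p$ to the multiplier $\eta(2^{-j}\sqrt{A_V})$ yields the claimed uniform bound. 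In the homogeneous case one uses that, under \eqref{ass:1}, \eqref{homog} is available for all $j \in \mathbb Z$ and $e^{-2^{-2j}A_V}$ is bounded on $L^p(\Omega)$ via the Gaussian kernel estimate of Lemma~\ref{lem:pointwise}; in the inhomogeneous case only $j \ge 1$ occurs, so $2^{-2j}$ is small and $A_V \ge -\lambda_0^2 I$ already gives boundedness of $e^{-2^{-2j}A_V}$.

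It then remains to assemble the pieces. For \eqref{901-8} I would square the uniform bound and sum over $j \in \mathbb Z$. For \eqref{901-8inhom} I would do the same over $j \in \mathbb N$ and, since $\psi \in C_0^\infty(\mathbb R)$, use \eqref{orig1} of Lemma~\ref{lem:calc} with $m = 0$ and $r = p$ to obtain $\| \psi(A_V) f \|_{L^p} \le C \| f \|_{L^p}$; adding the two contributions gives \eqref{901-8inhom}. I do not expect a substantive obstacle: the only thing to verify carefully is the $j$-independence of the $L^p$-operator norm of $\eta(2^{-j}\sqrt{A_V})$, and this is precisely what Lemma~\ref{lem:calc} supplies once one observes that rescaling turns $\phi_0(t)e^{t^2}$ into a fixed compactly supported smooth function. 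The hypothesis $f \in C_0^\infty(\Omega)$ serves only to make every term a genuine element of $L^p(\Omega)$ (since then $f \in L^2(\Omega)\cap L^p(\Omega)$), and this direction does not use $1 < p \le 2$; that restriction will instead be needed for the companion square-function estimate $\big\| \{ \| e^{-2^{-2j}A_V} f \|_{L^p} \}_{j} \big\|_{\ell^2} \le C \| f \|_{L^p}$ that is proved separately.
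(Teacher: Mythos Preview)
Your proposal is correct and follows essentially the same approach as the paper: write $\phi_j(\sqrt{A_V}) = \big(e^{2^{-2j}A_V}\phi_j(\sqrt{A_V})\big)\, e^{-2^{-2j}A_V}$, observe that $e^{2^{-2j}\lambda^2}\phi_j(\lambda)\in C_0^\infty((0,\infty))$, apply \eqref{inhom}/\eqref{homog} with $\alpha=0$, and take the $\ell^2$-norm (adding the bound $\|\psi(A_V)f\|_{L^p}\le C\|f\|_{L^p}$ from \eqref{orig1} in the inhomogeneous case). Your side remarks about the $L^p$-boundedness of $e^{-2^{-2j}A_V}$ itself are not needed for this lemma --- only the multiplier bound on $\eta(2^{-j}\sqrt{A_V})$ matters --- but they do no harm.
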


\begin{proof}
[\bf Proof.]
Since $e^{2^{-2j} \lambda ^2} \phi_j (\lambda)$ is in $C_0 ^\infty ((0,\infty))$, 
it follows from \eqref{inhom} for 
$\alpha = 0$ that 
\begin{align}\label{EQ:EQZ}
\| \phi _j (\sqrt{A_V}) f \|_{L^p} 
= &
\big\| \big(e^{2^{-2j} A_V} \phi _j (\sqrt{A_V}) \big) e^{-2^{-2j} A_V}f \|_{L^p} \\
\leq &
C \| e^{-2^{-2j} A_V} f \|_{L^p} 
\nonumber
\end{align}
for any $j \in \mathbb N$. Then, taking the $\ell^2 (\mathbb N)$-norm,
we obtain \eqref{901-8inhom}.
As to the homogeneous case, 
thanks to \eqref{homog} for $\alpha = 0$, inequality \eqref{EQ:EQZ} is also valid for any $j\in \mathbb{Z}$, 
and hence, taking the $\ell^2 (\mathbb Z)$-norm,
we conclude \eqref{901-8}. 
\end{proof}

\begin{lem}[The Khinchine inequality] \label{lem:Khinchine}
Let $\{ r_j (t) \}_{j=1}^\infty$ be a sequence of Rademacher functions, that is, 
$$
r_j (t) := 
\sum _{ k=1} ^{2^j} (-1)^{k-1} 
  \chi_{[(k-1)2^{-j} , k 2^{-j})} (t) 
  \quad \text{for } t \in [0,1],
$$
where $\chi_I$ denotes the characteristic function 
on the interval $I$. 
Then for any $p$ with $1 < p < \infty$, 
there exists a constant $C > 0$ such that 
\begin{equation}\label{901-11}
C^{-1} \| a \|_{\ell ^2 (\mathbb N)} 
\leq \Big\| \sum _{j \in \mathbb N} a_j r_j 
     \Big\| _{L^p (0,1)}
\leq C \| a \|_{\ell ^2 (\mathbb N)} 
\end{equation}
for all $a = \{ a_j \}_{j \in \mathbb Z} \in \ell^2 (\mathbb N)$. 
\end{lem}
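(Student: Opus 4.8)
The plan is to establish \eqref{901-11} by the classical three‑step argument: an exact $L^2$ identity, a subgaussian moment bound coming from the independence of the $r_j$, and a H\"older interpolation to cover the range $1<p<2$. First I would reduce to the case where $a=\{a_j\}$ has only finitely many nonzero entries, say $a_j=0$ for $j>N$, and recover the general case by a limiting argument at the end. Write $S:=\sum_{j\le N}a_jr_j$. Since the Rademacher functions satisfy $\int_0^1 r_jr_k\,dt=\delta_{jk}$, we have the identity
\[
\|S\|_{L^2(0,1)}^2=\sum_{j\le N}a_j^2=\|a\|_{\ell^2(\mathbb N)}^2,
\]
which already gives the left inequality in \eqref{901-11} when $p\ge 2$ (because $\|S\|_{L^2}\le\|S\|_{L^p}$ on the probability space $(0,1)$) and the right inequality when $1<p<2$ (because $\|S\|_{L^p}\le\|S\|_{L^2}$ by Jensen's inequality).

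The key step is the exponential estimate. Using that the $r_j$ are independent with $\int_0^1 r_j\,dt=0$, for every $\lambda\in\mathbb R$ one has
\[
\int_0^1 e^{\lambda S}\,dt=\prod_{j\le N}\int_0^1 e^{\lambda a_jr_j}\,dt=\prod_{j\le N}\cosh(\lambda a_j)\le\prod_{j\le N}e^{\lambda^2a_j^2/2}=e^{\lambda^2\|a\|_{\ell^2}^2/2},
\]
where I used $\cosh x\le e^{x^2/2}$. Applying Chebyshev's inequality to $e^{\lambda S}$ and to $e^{-\lambda S}$ and optimising in $\lambda>0$ yields the subgaussian tail bound $|\{t\in(0,1):|S(t)|>u\}|\le 2e^{-u^2/(2\|a\|_{\ell^2}^2)}$ for all $u>0$. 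Integrating $pu^{p-1}$ against this tail distribution gives $\|S\|_{L^p(0,1)}\le C_p\|a\|_{\ell^2(\mathbb N)}$ for every $p<\infty$, with $C_p$ depending only on $p$; this is the right inequality in \eqref{901-11} for $p\ge 2$.

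It remains to prove the left inequality for $1<p<2$. Choose $\theta\in(1/3,1)$ with $\tfrac12=\tfrac{\theta}{p}+\tfrac{1-\theta}{4}$; the log-convexity of the $L^q$ norms on $(0,1)$ (a consequence of H\"older's inequality) gives $\|S\|_{L^2}\le\|S\|_{L^p}^{\theta}\|S\|_{L^4}^{1-\theta}$. Combining this with $\|S\|_{L^2}=\|a\|_{\ell^2}$ and with the case $p=4$ of the bound just proved, namely $\|S\|_{L^4}\le C_4\|a\|_{\ell^2}$, and solving for $\|a\|_{\ell^2}$, one obtains $\|a\|_{\ell^2(\mathbb N)}\le C\|S\|_{L^p(0,1)}$. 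Finally, for general $a\in\ell^2(\mathbb N)$ I would let $N\to\infty$: the partial sums $S_N$ converge in $L^2(0,1)$ by orthonormality, hence in measure and (along a subsequence) almost everywhere, and they form a Cauchy sequence in $L^p(0,1)$ as well (again by the upper bound, applied to the finite sub-sums of the tail $\sum_{j>N}a_jr_j$), so $\|S_N\|_{L^p}\to\|S\|_{L^p}$ while $\|a^{(N)}\|_{\ell^2}\to\|a\|_{\ell^2}$, and \eqref{901-11} passes to the limit. I expect no real difficulty here: the content is entirely in the exponential estimate of the second paragraph, and the only point that needs a little care is this concluding limiting argument.
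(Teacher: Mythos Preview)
Your argument is correct and is the standard textbook proof of Khinchine's inequality: orthonormality for the $L^2$ identity, independence to get the subgaussian moment generating function, integration of the resulting tail bound for the upper estimate when $p\ge2$, and H\"older/log-convexity against the $L^4$ bound for the lower estimate when $1<p<2$. The limiting step to pass from finite sums to general $a\in\ell^2(\mathbb N)$ is justified exactly as you describe. One small point worth making explicit is that if the coefficients $a_j$ are complex (the paper uses complex-valued function spaces elsewhere), you should split $S$ into real and imaginary parts and apply the real-coefficient argument to each; this costs only an absolute constant.

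As for the comparison you asked about: the paper does not give its own proof of this lemma. It is stated as a classical result and then applied in the proof of Proposition~\ref{thm:4}. So there is no alternative approach in the paper to contrast yours with; you have simply supplied the standard proof of a quoted fact.
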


\vskip3mm

\noindent 
{\bf Proof of Proposition \ref{thm:4} (i): The embedding  
  \begin{equation}\label{EQ:1-emb}
  L^p (\Omega) \hookrightarrow \dot B^0_{p,2} (A_V) \quad \text{for } 1 < p \leq 2.
 \end{equation} } 

\noindent 
It is sufficient to show that 
\begin{equation}\label{901-10}
\| f \|_{\dot B^0_{p,2}(A_V)} \leq C \| f \|_{L^p} 
\quad \text{for any } f \in C_0^\infty (\Omega)
\end{equation}
due to the fact that $C_0^\infty (\Omega)$ is dense in $L^p (\Omega)$. 
{Let $\{r_j(t)\}$ be the sequence of Rademacher functions as in Lemma \ref{lem:Khinchine}.}
If we show that there exists a constant 
$C > 0$ such that 
\begin{equation}\label{901-9}
\Big\| \sum _{ j = 1} ^N r_j (t) e^{-2^{-2j} A_V } f 
\Big\| _{L^p} 
+ 
\Big\| \sum _{j = -N } ^{-1} r_{-j} (t) e^{- 2^{-2j} A_V } f 
\Big\| _{L^p} 
\leq C \| f \|_{L^p}
\end{equation}
for all $t \in [0,1]$ and $N \in \mathbb N$,
then \eqref{901-10} is verified. 
Indeed, by using the Minkowski 
inequality, we have 
\begin{equation}\notag 
\begin{split}
& 
\Big(\sum _{ |j| \leq N} \| e^{ - 2^{-2j} A_V } f \|_{L^p} ^2 
\Big)^{1/2}
\\
\leq &
\| e^{ -A_V} f \| _{L^p}
 +   \Big\| \Big( \sum _{ j = 1 } ^N | e^{ -2^{-2j} A_V} f |^2 \Big )^{1/2}
     \Big\| _{L^p} 
 +   \Big\| \Big( \sum _{ j= -N }^{-1} | e^{ -2^{-2j} A_V} f |^2 \Big )^{1/2}
     \Big\| _{L^p} . 
\end{split}
\end{equation}
Since 
$\| e^{-A_V} f \| _{L^p} \leq C \| f \|_{L^p} 
$ 
by \eqref{1117-5}, 
and since the third term in the right member of the above estimate 
is treated analogously to the second one, 
we may consider only the second term. 
By using \eqref{901-11} and \eqref{901-9}, we estimate
\begin{equation}\notag 
\begin{split}
     \Big\| \Big( \sum _{ j=1}^N | e^{- 2^{-2j} A_V} f |^2 \Big )^{1/2}
     \Big\| _{L^p} 
\leq& 
  C   \Big\| \Big( \int_0^1 \Big| \sum _{j=1}^N r_j (t) e^{ -2^{-2j} A_V } f 
                           \Big| ^p
           \, dt \Big)^{1/p}
     \Big\| _{L^p}
\\
=& 
 C \Big( \int_0^1 
        \Big\| \sum _{j=1}^N r_j( t) e^{ - 2^{-2j} A_V} f \Big\| _{L^p}^p \, dt
  \Big) ^{1/p}
\\
\leq & 
C \Big( \int_0^1 \| f \|_{L^p} ^p \, dt \Big) ^{1/p}
\\
=& 
C \| f \|_{L^p}, 
\end{split}
\end{equation}
which implies that
$$
\Big(\sum _{ |j| \leq N} \| e^{ - 2^{-2j} A_V} f \|_{L^p} ^2 
\Big)^{1/2} 
\leq C \| f \|_{L^p} 
\quad \text{for any } N \in \mathbb N. 
$$
Taking the limit as $N \to \infty$ in the above inequality, and combining 
the resultant with the inequality \eqref{901-8} in 
Lemma \ref{lem:62}, 
we obtain the required inequality \eqref{901-10}. Thus, 
we get  
the embedding \eqref{EQ:1-emb}.\\

We must show \eqref{901-9}. 
Let $\widetilde f$  be the zero extension of $f$ to the outside of $\Omega$. 
Recall that
$G_t (x)$ is the function of Gaussian type 
in the right member of \eqref{1117-5}.
Noting that 
\[
|r_j(t)|\le 1 \quad \text{for all $t\in[0,1]$,}
\]
we deduce from \eqref{1117-5} 
that 
\begin{equation}\label{902-4}
\Big| \sum _{ j = 1} ^N r_j (t) e^{ - 2^{-2j} A_V} f
\Big| 
+ 
\Big| \sum _{ j = -N} ^{-1} r_j (t) e^{ - 2^{-2j} A_V } f
\Big| 
\leq C \int_{\mathbb R^n} \sum _{j=-N} ^N  G_{2^{-2j}} (x-y) |\widetilde f(y) | dy 
\end{equation}
for all $t\in[0,1]$.
Here, it is certain to check 
that for each $\alpha \in (\mathbb N\cup \{0\}
)^n$  
\begin{gather}\notag 
|x | ^{n + |\alpha|} \big| \partial_x ^\alpha G_{2^{-2j}} (x) \big|
\leq C |2^j x|^{n+|\alpha|} \big| \partial_x ^\alpha G_1 (2^j x) \big|, 
\end{gather}
 and hence, 
\begin{gather}\notag 
\begin{split}
\sup _{t \in [0,1] , N \in \mathbb N , x \in \mathbb R^n}|x | ^{n + |\alpha|} \Big| \sum _{j=-N} ^N \partial_x ^\alpha G_{2^{-2j}} (x)\Big| 
& \leq C \sum _{ j \in \mathbb Z} |2^j x|^{n+|\alpha|} \big| \partial_x ^\alpha G_1 (2^j x) \big|
\\
& \leq C \sum _{ j \in \mathbb Z} 2^{(n+|\alpha|)j} e^{-c2^{-2j}}
 < \infty .
\end{split}
\end{gather}
Then, applying the $L^p$-boundedness of the singular integral operators (see e.g. p.29 in \cite{Stein_1970}), we get 
\begin{equation}\notag 
\Big\| \int_{\mathbb R^n} \sum _{j=-N} ^N  G_{2^{-2j}} (x-y) |\widetilde f(y) | dy
\Big\|_{L^p(\mathbb R^n)} 
\leq C \| \widetilde f \|_{L^p (\mathbb R^n)}
= C \| f \|_{L^p } . 
\end{equation}
Hence the required inequality \eqref{901-9} is a consequence of 
\eqref{902-4} and the above estimate. 
Therefore, the proof of the embedding 
\eqref{EQ:1-emb} is completed. 
\hfill $\qed$

\vskip3mm 

\noindent 
{\bf Proof of Proposition \ref{thm:4} (ii): 
The embedding 
  \begin{equation}\label{EQ:2-emb}
  \dot B^0_{p,2} (A_V)
  \hookrightarrow 
  L^p (\Omega) \quad \text{for } 2 \leq p < \infty.
  \end{equation} }%
Let $p'$ be such that $1/p + 1/p' = 1$. 
Then the embedding \eqref{EQ:2-emb}
is an immediate consequence of $1 < p' \leq 2$, 
$L^{p'} (\Omega) \hookrightarrow \dot B^0_{p',2} (A_V)$, 
$ L^{p'}(\Omega) ^* = L^p (\Omega)$ and  
$\dot B^0_{p',2} (A_V)^*= \dot B^0_{p,2} (A_V)$.  
\hfill $\square$

\vskip3mm 

\noindent 
{\bf Proofs of Proposition \ref{thm:4} (i) and (ii) for the inhomogeneous Besov spaces. } 
Let $1 < p \leq 2$. Then all we have to do is to show that 
\begin{equation}\notag 
\| f \|_{B^0_{p,2}(A_V)} \leq C \| f \|_{L^p} 
\quad \text{for any } f \in C_0^\infty (\Omega)  . 
\end{equation}
Referring to the estimate \eqref{901-8inhom}, we have only to show the corresponding estimate 
to \eqref{901-9}, that is, 
\begin{equation}\notag 
\Big\| \sum _{ j = 1} ^N r_j (t) e^{-2^{-2j} A_V } f 
\Big\| _{L^p} 
\leq C \| f \|_{L^p},
\end{equation}
which is proved in 
the same way as in the proof of \eqref{901-9} by using the pointwise estimate 
\eqref{1117-4_2} for the kernel of $e^{-tA_V}$.
Hence we have the embedding 
\begin{equation}\label{EQ:rad} %
\text{$L^p (\Omega) \hookrightarrow B^0_{p,2} (A_V)$ \quad for $1 < p \leq 2$.}
\end{equation}

Finally, referring to the proof of \eqref{EQ:2-emb}, 
we obtain the embedding 
$$
B^0_{p,2} (A_V) \hookrightarrow L^p (\Omega) \quad \text{for } 2 \leq p < \infty 
$$
by taking the duality of 
$L^{p'} (\Omega) \hookrightarrow B^0_{p',2} (A_V)$.
The proof of Proposition \ref{thm:4} is now finished.
\hfill $\square$
\\

We now turn to the proof of Proposition \ref{thm:7}. 
\\

\noindent 
{\bf Proof of Proposition \ref{thm:7}. } 
Putting 
$$
\dot X ^s_{p,q} (A_V)
:=
\Big\{ f \in \mathcal X'_V(\Omega) 
     \, \Big| \, 
     \| f \|_{\dot B^s_{p,q}(A_V)} < \infty , \,
     f = \sum _{ j \in \mathbb Z} 
        \phi_j (\sqrt{A_V} \,) f 
        \text{ in } \mathcal X'_V(\Omega)
   \Big\},
$$
we see that 
$$ 
\dot X^s_{p,q} (A_V) \subset \dot B^s_{p,q} (A_V). 
$$
Hence it is sufficient to
prove that
\begin{equation}\label{EQ:AV-Besov}
\dot B^s_{p,q} (A_V) \hookrightarrow \dot X^s_{p,q} (A_V).
\end{equation}
Let $f \in \dot B^s_{p,q} (A_V)$. Then $f \in \mathcal Z_V'(\Omega)$, and
thanks to Lemma \ref{lem:decomposition1} (ii),  
$f$ is written as
\begin{equation}\label{1126-2}
\begin{split}
f  
& =  
\sum _{ j \leq 0} \phi_j (\sqrt{A_V}) f 
 + \sum _{j \geq 1} \phi_j ( \sqrt{A_V}) f  
 \quad \text{in } \mathcal Z_V ' (\Omega)
\\
& =:
  I + II . 
\end{split}
\end{equation}
For the low frequency part, it follows from \eqref{homog} for $\alpha = 0$ that 
\begin{align*} 
\| I \|_{L^\infty} 
\leq& \sum _{ j \leq 0 } \| \phi_j (\sqrt{A_V}) f \|_{L^\infty} \\
\leq& C \sum _{ j \leq 0 } 2^{\frac{n}{p}j} \| \phi_j (\sqrt{A_V}) f \|_{L^p} ,
\end{align*}
where the right member is finite when $(s,q) = (n/p , 1)$. 
In the case when $s < n /p$, we estimate 
\begin{align*} 
\| I \|_{L^\infty} 
\leq& C \sum _{ j \leq 0 } 2^{( \frac{n}{p} - s )j} 
    \sup _{ k \leq 0} 2^{s k } \| \phi_{ k} (\sqrt{A_V}) f \|_{L^p}\\
\leq& C \| f \|_{\dot B^s_{p,\infty} (A_V)}\\
\leq& C \| f \|_{\dot B^s_{p,q} (A_V)} , 
\end{align*}
\noindent 
where we used the embedding in Proposition \ref{thm:3} {\rm (ii)} in the last step.
Hence the above two estimates and Lemma \ref{cor:1} imply that $I$ 
belongs to $\mathcal X' _V (\Omega)$.  
As to $II$,  
since the high frequency part of $q_{V,M} ( \cdot )$ 
is equivalent to that of $p_{V,M}(\cdot)$, it follows that $II \in \mathcal X'_V (\Omega)$. 
Hence the identity \eqref{1126-2} holds in the topology of $\mathcal X'_V (\Omega)$. 
Therefore, we get %
$f \in \dot X^s_{p,q} (A_V)$. 
Thus we conclude the embedding \eqref{EQ:AV-Besov}.
This completes the proof of Proposition \ref{thm:7}. 
\hfill$\square$

\section{Proof of Proposition
\ref{thm:6}} \label{sec:thm:6}

In this section we prove Proposition 
\ref{thm:6}.
We utilize the theory of 
Lorentz spaces and introduce the following notations 
(see e.g. \cite{Grafakos_2014,Ziemer_1989}). 
Let $f$ be a measurable function on 
$\Omega$.
We define the non-increasing rearrangement of 
$f$ as 
\[
 f^* (t ) := 
 \inf \{ \,\, c > 0 \,\, | \,\, 
 m_f ( c ) \leq t \,\, \},  
\]
where 
$m_f ( c )$ 
is the distribution function of $f$ 
which is defined by the Lebesgue measure of the set 
$\{ \, x \in \mathbb R ^n \, | \, |f(x)| > 
c \, \}$. 
We define a function $f^{**}(t)$ on 
$(0,\infty)$ as  
\[
 f^{**}(t) := \frac{1}{t} 
  \int _0 ^t f^* (t') \, dt'. 
\]
Lorentz spaces $L^{p, q} (\Omega)$ 
are 
defined by letting 
\begin{equation} \notag
L^{p,q}( \Omega ) := 
 \{ \,\, f \, : \text{measurable on }  \Omega 
   \,\, |  \,\,  
 \| f \| _{L^{p , q} }< \infty \,\, \}, 
\end{equation}
where 
\begin{equation} \notag 
\| f \|_{L^{p,q} } 
 := 
 \begin{cases}\displaystyle 
 \Big\{ \int_0^\infty \big( t^{\frac{1}{p}} f^{**} (t) \big) ^q \frac{dt}{t} 
 \Big\} ^{\frac{1}{q}} 
 & \quad \text{if } 1 \leq p ,q < \infty,
 \\ \displaystyle 
 \sup _{t > 0 } 
      t ^{\frac{1}{p}} f^{**}(t). 
 & \quad \text{if } 1 \leq p \leq \infty , q = \infty.
 \end{cases}
\end{equation}
In what follows, 
we denote by $\| \cdot \|_{L^{p,q}(\mathbb R^n)}$ the norm of $L^{p,q} (\mathbb R^n)$ 
only when $\Omega = \mathbb R^n$. 
Note that 
\begin{gather}
\label{pq}
L^{p,1}(\Omega) \hookrightarrow L^{p,q} (\Omega)
\quad \text{if } 1 \leq p, q \leq \infty,
\\ \notag 
L^p (\Omega ) = L^{p,\infty} ( \Omega )  
\quad \text{if } p = 1,\infty,
\\ \notag 
L^{p,1} ( \Omega ) \hookrightarrow L^p ( \Omega ) = L^{p,p} (\Omega ) 
\hookrightarrow L^{p,\infty} ( \Omega )
\quad \text{if } 1 < p < \infty. 
\end{gather}
Let $1<p<\infty$. 
We have the H\"older inequality and Young inequality in the Lorentz spaces: 
\begin{align}
\label{Holder}
&\| f g \|_{L^{p,q}} 
\leq \| f \|_{L^{p_1 , q_1}} \| g \|_{L^{p_2, q_2}}  
\quad \text{if } \frac{1}{p} = \frac{1}{p_1} + \frac{1}{p_2}, \quad 
\frac{1}{q} = \frac{1}{q_1} + \frac{1}{q_2}, 
\\
\label{Holder1}
&\| f g \|_{L^1} 
\leq  \| f \|_{L^{p_1 , q_1}} \| g \|_{L^{p_2, q_2}} 
\hspace{6.5mm} 
 \text{if } 1 = \frac{1}{p_1} + \frac{1}{p_2} = \frac{1}{q_1} + \frac{1}{q_2}, 
\\ 
\label{Young}
& 
\| f * g \|_{L^{p,q} (\mathbb R^n)} 
\leq 
\| f \|_{L^{p_1 , q_1} (\mathbb R^n)} 
 \| g \|_{L^{p_2, q_2} (\mathbb R^n) }  
\\ \notag 
& 
\hspace{55.5mm} 
\text{if } 
\frac{1}{p} = \frac{1}{p_1} + \frac{1}{p_2} -1 , 
\quad \frac{1}{q} = \frac{1}{q_1} + \frac{1}{q_2} ,
\end{align}
where $1 \leq p_1,p_2,q,q_1,q_2 \leq \infty$. %
We often use the estimates in the Lorentz spaces 
on $\mathbb R^n$ for functions on $\Omega$ 
extending them by zero extension to the outside 
of $\Omega$ when the necessity arises. 
\\

We 
prove Proposition
\ref{thm:6}
only for the homogeneous Besov spaces $\dot B^s_{p,q} (A_V)$, 
since the inhomogeneous case is proved 
in an analogous way. 
\\

We prepare the following four lemmas. 

\begin{lem}\label{lem:91}
Let $1 \leq p_0 < p < \infty$ and $1 \leq q \leq \infty$. 
Assume that $V$ satisfies \eqref{1104-1} and \eqref{ass:1}. 
Then there exists a constant 
$C>0$ such that 
\begin{equation}\label{910-1}
\| \phi_j (\sqrt{A_V}) f \|_{L^{p,q}} 
+ \| \phi_j (\sqrt{A_0}) f \|_{L^{p,q}} 
\leq C 2^{n(\frac{1}{p_0} - \frac{1}{p})j}\| f \|_{L^{p_0}} . 
\end{equation}
for all $j \in \mathbb Z$ and $f \in 
L^{p_0} (\Omega)$. 
\end{lem}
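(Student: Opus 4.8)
The plan is to obtain \eqref{910-1} by interpolating the \emph{range} of the operator $\phi_j(\sqrt{A_V})$ between the two endpoint bounds supplied by \eqref{homog} of Lemma~\ref{lem:calc}. Since $V=0$ satisfies \eqref{1104-1} and \eqref{ass:1}, and $A_0$ is just $A_V$ with $V=0$, the term $\|\phi_j(\sqrt{A_0})f\|_{L^{p,q}}$ is handled by the identical argument, so I will write everything for $\phi_j(\sqrt{A_V})$.

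First I would record the two endpoint estimates. Taking $\alpha=0$ and $\phi=\phi_0$ in \eqref{homog} (so that $\phi_0(2^{-j}\sqrt{A_V})=\phi_j(\sqrt{A_V})$) gives, for all $j\in\mathbb Z$ and all $f\in L^{p_0}(\Omega)$,
\begin{equation*}
\|\phi_j(\sqrt{A_V})f\|_{L^{p_0}}\le C\|f\|_{L^{p_0}},
\qquad
\|\phi_j(\sqrt{A_V})f\|_{L^{\infty}}\le C\,2^{\frac{n}{p_0}j}\|f\|_{L^{p_0}}.
\end{equation*}
In particular $\phi_j(\sqrt{A_V})f\in L^{p_0}(\Omega)\cap L^{\infty}(\Omega)$, so that the $K$-functional $K\big(t,\phi_j(\sqrt{A_V})f;L^{p_0},L^{\infty}\big)$ is finite for each $t>0$.

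Next I would invoke the elementary interpolation principle: if a linear map $T$ sends a Banach space $X$ boundedly into $B_0$ with norm $M_0$ and into $B_1$ with norm $M_1$, then it sends $X$ boundedly into the real interpolation space $(B_0,B_1)_{\theta,q}$ with norm $\le C_\theta M_0^{1-\theta}M_1^{\theta}$; this follows from $K(t,Tu;B_0,B_1)\le\min(M_0,tM_1)\|u\|_X\le M_0^{1-\theta}(tM_1)^{\theta}\|u\|_X$ upon integrating $t^{-\theta q}$ against $dt/t$. Applying this with $X=L^{p_0}(\Omega)$, $B_0=L^{p_0}(\Omega)$, $B_1=L^{\infty}(\Omega)$, $M_0=C$, $M_1=C2^{\frac{n}{p_0}j}$, $\theta=1-\frac{p_0}{p}\in(0,1)$ (admissible because $p_0<p<\infty$), and using the classical identification $(L^{p_0}(\Omega),L^{\infty}(\Omega))_{\theta,q}=L^{p,q}(\Omega)$ with $\frac1p=\frac{1-\theta}{p_0}$, valid for $1\le p_0<\infty$ and $1\le q\le\infty$ with constants not involving $j$, I obtain
\[
\|\phi_j(\sqrt{A_V})f\|_{L^{p,q}}
\le C\,M_0^{1-\theta}M_1^{\theta}\|f\|_{L^{p_0}}
= C\,2^{\frac{n\theta}{p_0}j}\|f\|_{L^{p_0}}
= C\,2^{n(\frac1{p_0}-\frac1p)j}\|f\|_{L^{p_0}},
\]
since $\frac{\theta}{p_0}=\frac1{p_0}-\frac1p$. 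Adding the analogous bound for $A_0$ yields \eqref{910-1}.

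The argument is essentially routine; the only care needed is to interpolate the target only, keeping the domain $L^{p_0}(\Omega)$ fixed, and to cite the correct Lorentz-space identification. A more hands-on alternative, closer to the kernel techniques used elsewhere in this paper, is to write $\phi_j(\sqrt{A_V})f=e^{-2^{-2j}A_V}\big(e^{2^{-2j}A_V}\phi_j(\sqrt{A_V})f\big)$, bound the inner factor in $L^{p_0}$ by Proposition~\ref{prop:Lp-bound} (using $e^{\lambda^2}\phi_0(\lambda)\in C_0^\infty((0,\infty))$), dominate $e^{-2^{-2j}A_V}$ pointwise by the Gaussian $G_{2^{-2j}}$ via \eqref{1117-5}, and conclude with the Lorentz Young inequality \eqref{Young} together with the scaling $\|G_{2^{-2j}}\|_{L^{r,q_1}(\mathbb R^n)}\le C2^{n(\frac1{p_0}-\frac1p)j}$ (for $\frac1r=1+\frac1p-\frac1{p_0}$) and, when $q>p_0$, the embedding $L^{p,p_0}(\Omega)\hookrightarrow L^{p,q}(\Omega)$. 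I expect the interpolation proof to be the shorter one to present.
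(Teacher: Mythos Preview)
Your interpolation argument is correct, and it is genuinely different from the paper's proof. The paper first reduces to $q=1$ via the embedding $L^{p,1}\hookrightarrow L^{p,q}$, then writes $\phi_j(\sqrt{A_V})=e^{-2^{-2j}A_V}\{e^{2^{-2j}A_V}\phi_j(\sqrt{A_V})\}$, bounds the bracketed operator on $L^{p_0}$ (so that the inner function lands in $L^{p_0,\infty}$), dominates the heat kernel pointwise by the Gaussian $G_{2^{-2j}}$ via \eqref{1117-5}, and finishes with the Lorentz Young inequality \eqref{Young} together with the scaling $\|G_{2^{-2j}}\|_{L^{p_1,1}(\mathbb R^n)}=C(p_1)\,2^{n(1/p_0-1/p)j}$, where $1/p=1/p_0+1/p_1-1$; the hypothesis $p_0<p$ enters precisely as $p_1>1$, needed for this Lorentz norm to be finite. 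This is essentially the ``hands-on alternative'' you sketch at the end of your proposal. Your main route is cleaner: it black-boxes the kernel analysis inside the two endpoint bounds \eqref{homog} already proved in Lemma~\ref{lem:calc}, and then only uses the standard identification $(L^{p_0},L^\infty)_{\theta,q}=L^{p,q}$, so no Lorentz Young inequality is needed. The paper's approach has the virtue of making the Gaussian upper bound and the origin of the constraint $p_0<p$ completely explicit, and it stays parallel to the proof of Lemma~\ref{lem:calc}; your approach is shorter and arguably more robust, since it would apply unchanged to any operator family satisfying the two endpoint estimates.
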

\begin{proof}
[\bf Proof.] 
It is sufficient to consider the case $q = 1 $ due to the embedding \eqref{pq}. 
Let $p_1$ be such that $ 1/p = 1/p_0 + 1/p_1 -1$. 
Then it follows from the Young inequality \eqref{Young} and 
the same argument as Lemma~\ref{lem:calc}
that 
\begin{equation}
\begin{split}
\| \phi_j (\sqrt{A_V}) f \|_{L^{p,1}} 
& = \big\| e^{- 2^{-2j} A_V } \big\{ e^{2^{-2j}A_V} \phi_j (\sqrt{A_V}) \big\} f  
    \big\|_{L^{p,1}}
\\
& \leq \| G_{2^{-2j}} \|_{L^{p_1,1} (\mathbb R^n) } 
  \big\| \big\{ e^{2^{-2j}A_V} \phi_j 
  (\sqrt{A_V}) \big\} f  \big\|_{L^{p_0,\infty}}
\\
& 
\leq C(p_1) 2^{n(\frac{1}{p_0} - \frac{1}{p})j}
\big\| \big\{ e^{2^{-2j}\Delta} \phi_j 
(\sqrt{A_V}) \big\} f  \big\|_{L^{p_0}}
\\
& 
\leq C(p_1) 2^{n(\frac{1}{p_0} - \frac{1}{p})j}
\| f  \|_{L^{p_0}} ,
\end{split}
\end{equation}
where $G_t$ is the function of Gaussian type appearing in the right member of 
\eqref{1117-5},
and we used the fact that 
$$
\| G_{2^{-2j}} \|_{L^{p_1,1}} = C (p_1) 2^{n(\frac{1}{p_0} - \frac{1}{p})j} 
\quad \text{for } p_1 > 1.
$$
Here we note that the above constant $C = C(p_1)$ is finite if and only if $p_1 > 1$, 
and hence, we have to assume that $p_1 > 1$, namely, $p_0 < p$. 
The estimate for $\phi_j (\sqrt{A_0}) f$ is obtained in the same way. 
Thus the proof of  of Lemma~\ref{lem:91} is completed. 
\end{proof}

\begin{lem}\label{lem:add}
Let $\{ \phi_j \}_{j \in \mathbb Z}$ be defined by \eqref{917-2}.  
Assume that $V$ satisfies \eqref{1104-1}, \eqref{ass:1} and \eqref{ass:1_0}. 
Let $1 \leq p \leq \infty$. Then 
$$
A_V ^m \phi_j (\sqrt{A_0}) f \in \mathcal Z_V ' (\Omega) 
\quad \text{and} \quad 
A_0 ^m \phi_j (\sqrt{A_V}) f \in \mathcal Z_0 ' (\Omega) 
$$
for any $j, m \in \mathbb Z$ and $f \in L^p (\Omega)$. 
\end{lem}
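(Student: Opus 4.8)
The plan is to obtain both memberships by composing two operations that are already under control: the $L^p$-boundedness of the spectral multipliers $\phi_j(\sqrt{A_0})$ and $\phi_j(\sqrt{A_V})$, and the continuity of the powers $A_V^m$ and $A_0^m$ on the dual test function spaces. Fix $j,m\in\mathbb Z$, $1\le p\le\infty$ and $f\in L^p(\Omega)$. First I would note that $\phi_j(\sqrt{A_0})$ is bounded on $L^p(\Omega)$: this is Proposition~\ref{prop:Lp-bound} applied to the Dirichlet Laplacian $A_0$ (which trivially satisfies \eqref{1104-1} and \eqref{ass:1}), or equivalently estimate \eqref{homog} in Lemma~\ref{lem:calc} with $r=p$ and $\alpha=0$. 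Hence $\phi_j(\sqrt{A_0})f\in L^p(\Omega)$, and therefore $\phi_j(\sqrt{A_0})f\in\mathcal Z_V'(\Omega)$ by inclusion \eqref{0204-4} in Lemma~\ref{cor:1}.

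Next I would invoke \eqref{0204-5}, established inside the proof of Proposition~\ref{thm:3}: for every $s_0\in\mathbb R$ the operator $A_V^{s_0/2}$ is a continuous operator from $\mathcal Z_V(\Omega)$ to itself, and (hence) from $\mathcal Z_V'(\Omega)$ to itself. It is here that assumption \eqref{ass:1} is used: it ensures that $A_V$ is non-negative and that zero is not an eigenvalue of $A_V$, so the negative powers $A_V^{m}$ with $m<0$ are genuinely defined on $\mathcal Z_V(\Omega)$ — via the decomposition $g=\sum_{k\in\mathbb Z}\phi_k(\sqrt{A_V})g$ of Lemma~\ref{lem:decomposition1}(ii) together with \eqref{homog} for $\alpha=m$. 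Taking $s_0=2m$, the operator $A_V^m$ maps $\mathcal Z_V'(\Omega)$ into itself, so composing with the previous step gives $A_V^m\phi_j(\sqrt{A_0})f\in\mathcal Z_V'(\Omega)$. The second assertion, $A_0^m\phi_j(\sqrt{A_V})f\in\mathcal Z_0'(\Omega)$, follows verbatim with the roles of $A_0$ and $A_V$ interchanged, using that $\phi_j(\sqrt{A_V})$ is $L^p$-bounded by \eqref{homog} and that $L^p(\Omega)\subset\mathcal Z_0'(\Omega)$.

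If one prefers a self-contained argument in place of quoting \eqref{0204-5}, the one inequality to establish is $\|A_V^m g\|_{L^{p'}}\le C\,q_{V,M}(g)$ for $g\in\mathcal Z_V(\Omega)$ and $M=M(m,n,p)$ large enough (with $1/p+1/p'=1$): decompose $g=\sum_k\phi_k(\sqrt{A_V})g$, write $A_V^m\phi_k(\sqrt{A_V})g=\big(A_V^m\Phi_k(\sqrt{A_V})\big)\phi_k(\sqrt{A_V})g$ with $\Phi_k=\phi_{k-1}+\phi_k+\phi_{k+1}$, apply \eqref{homog} with $\alpha=m$, and sum against the rapid decay of $\|\phi_k(\sqrt{A_V})g\|_{L^1}$ as $|k|\to\infty$ granted by the semi-norms $q_{V,M}$. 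With this in hand, for $g\in\mathcal Z_V(\Omega)$ one has $|{}_{\mathcal Z_V'}\langle A_V^m\phi_j(\sqrt{A_0})f,g\rangle_{\mathcal Z_V}|=|\int_\Omega f\,\overline{\phi_j(\sqrt{A_0})A_V^m g}\,dx|\le\|f\|_{L^p}\|\phi_j(\sqrt{A_0})A_V^m g\|_{L^{p'}}\le C\|f\|_{L^p}\,q_{V,M}(g)$, which exhibits $A_V^m\phi_j(\sqrt{A_0})f$ as a bounded functional on $\mathcal Z_V(\Omega)$. I expect the only subtle point to be the legitimacy of the negative powers $A_V^m$ ($m<0$) as operators on the test space and its dual, which is exactly where the no-zero-eigenvalue consequence of \eqref{ass:1} is needed; everything else is bookkeeping with results already in hand.
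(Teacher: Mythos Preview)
Your proposal is correct and follows essentially the same route as the paper: show $\phi_j(\sqrt{A_0})f\in L^p(\Omega)$ via \eqref{homog} with $\alpha=0$, embed into $\mathcal Z_V'(\Omega)$ by \eqref{0204-4}, and then apply \eqref{0204-5} (with $s_0=2m$) to conclude that $A_V^m$ maps $\mathcal Z_V'(\Omega)$ into itself; the second assertion is obtained by symmetry. Your additional self-contained alternative and the remark on where \eqref{ass:1} enters are accurate elaborations, but the core argument coincides with the paper's.
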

\begin{proof}[\bf Proof] 
Let $j \in \mathbb Z$ be fixed. 
Since $\phi_j (\sqrt{A_0}) f \in L^ p (\Omega)$ for any $f \in L^p (\Omega)$ 
by \eqref{homog} for $\alpha = 0$ in Lemma \ref{lem:calc}, 
it follows from \eqref{0204-4} in Lemma \ref{cor:1} 
that $\phi_j (\sqrt{A_0}) f \in \mathcal Z_V ' (\Omega)$. 
We proved the assertion \eqref{0204-5} in the proof of Proposition~\ref{thm:3};  
$A_V^m$ is the mapping from $\mathcal Z_V'(\Omega)$ to itself. 
This proves the first assertion. 
In the same way, the second assertion holds. 
The proof of Lemma \ref{lem:add} is complete. 
\end{proof}

\begin{lem} \label{lem:1020}
Let $\{ \phi_j \}_{j \in \mathbb Z}$ be defined by \eqref{917-2}, 
and take $\Phi_j := \phi_{j-1} + \phi_j + \phi_{j+1}$. 
Assume that $V$ satisfies \eqref{1104-1}, \eqref{ass:1} and \eqref{ass:1_0}. 
Then the following assertions hold{\rm :} 
\begin{enumerate}
\item[(i)]
Let $p = 1$ for $n = 2$ and $1 \leq p < n/2$ for $n \geq 3$. 
Then we have, for any $f \in L^p (\Omega)$
\begin{equation}\label{908-1}
\| \phi_j (\sqrt{A_V}) \Phi_k (\sqrt{A_0}) f \|_{L^p} 
\leq C 2^{-2(j-k)} \| f \|_{L^p} ,
\end{equation}
\begin{equation} \label{908-2}
\| \phi_k (\sqrt{A_0}) \Phi_j (\sqrt{A_V}) f \|_{L^p} 
\leq C 2^{-2(k-j)} \| f \|_{L^p} .
\end{equation}

\item[(ii)]
Let $p = \infty$ for $n = 2$ and $n/(n-2) < p \leq \infty$ for $n \geq 3$. 
Then we have, for any $f \in L^p (\Omega)$
\begin{equation}\label{908-3}
\| \phi_j (\sqrt{A_V}) \Phi_k (\sqrt{A_0}) f \|_{L^p} 
\leq C 2^{-2(k-j)} \| f \|_{L^p} , 
\end{equation}
\begin{equation} \label{908-4}
\| \phi_k (\sqrt{A_0}) \Phi_j (\sqrt{A_V}) f \|_{L^p} 
\leq C 2^{-2(j-k)} \| f \|_{L^p} .
\end{equation}
\end{enumerate}
\end{lem}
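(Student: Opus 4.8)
The plan is to deduce all four inequalities from one mechanism. First I would dispose of the ``easy half'': in \eqref{908-1} and \eqref{908-4} the weight $2^{-2(j-k)}$ is $\ge 1$ whenever $j\le k$, and likewise $2^{-2(k-j)}\ge 1$ for $k\le j$ in \eqref{908-2} and \eqref{908-3}, so in those regimes the claim reduces to the uniform $L^p$-bound for the composition $\phi_j(\sqrt{A_V})\Phi_k(\sqrt{A_0})$ (and its relatives), which follows from Proposition~\ref{prop:Lp-bound}, i.e.\ from \eqref{homog} in Lemma~\ref{lem:calc} with $\alpha=0$. Thus it suffices to treat $j>k$ in the first inequality of each part and $k>j$ in the second. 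Next, since $\phi_j$ and $\Phi_j$ are both $C_0^\infty((0,\infty))$ bumps at the dyadic scale $2^j$, it is enough to prove, for \emph{arbitrary} bumps $\eta,\zeta\in C_0^\infty((0,\infty))$ and $p$ in the range of part~(i),
\[
\|\eta_j(\sqrt{A_V})\,\zeta_k(\sqrt{A_0})f\|_{L^p}\le C\,2^{-2(j-k)}\|f\|_{L^p}\quad(j>k),\qquad
\|\eta_k(\sqrt{A_0})\,\zeta_j(\sqrt{A_V})f\|_{L^p}\le C\,2^{-2(k-j)}\|f\|_{L^p}\quad(k>j),
\]
where $\eta_j:=\eta(2^{-j}\,\cdot)$, $\zeta_k:=\zeta(2^{-k}\,\cdot)$. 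Part~(i) is the case $(\eta,\zeta)=(\phi,\Phi)$. Part~(ii) follows by $L^p$--$L^{p'}$ duality: the operators $\eta_j(\sqrt{A_V})$ and $\zeta_k(\sqrt{A_0})$ are self-adjoint on $L^2(\Omega)$, so the Banach adjoint of $\eta_j(\sqrt{A_V})\zeta_k(\sqrt{A_0})$ is $\zeta_k(\sqrt{A_0})\eta_j(\sqrt{A_V})$, and transposing the two displayed estimates with $(\eta,\zeta)$ replaced by $(\Phi,\phi)$ yields exactly \eqref{908-3} and \eqref{908-4} for $p'$ in the range of part~(ii); the endpoint $p'=\infty$ is obtained from $p=1$ by the same duality together with a truncation and dominated-convergence argument based on the Gaussian kernel bound \eqref{1117-5}, since $L^2\cap L^\infty$ is not norm-dense in $L^\infty$.

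For the first estimate with $j>k$ I would extract a factor $A_V$ from the left localizer. Writing $\eta(\mu)=\mu^2\widetilde\eta(\mu)$ with $\widetilde\eta(\mu):=\mu^{-2}\eta(\mu)\in C_0^\infty((0,\infty))$ gives $\eta_j(\sqrt{A_V})=2^{-2j}\,\widetilde\eta_j(\sqrt{A_V})\,A_V$, hence
\[
\eta_j(\sqrt{A_V})\,\zeta_k(\sqrt{A_0})f
=2^{-2j}\,\widetilde\eta_j(\sqrt{A_V})\,A_V\,\zeta_k(\sqrt{A_0})f
=2^{-2j}\,\widetilde\eta_j(\sqrt{A_V})\big(A_0+V\big)\zeta_k(\sqrt{A_0})f .
\]
The identity $A_V=A_0+V$ is applied to $\zeta_k(\sqrt{A_0})f$; rigorously I would first run the argument with $V$ replaced by its truncation $V_N:=\max\{\min\{V,N\},-N\}$, for which $A_{V_N}=A_0+V_N$ with $V_N$ bounded and $\mathcal D(A_{V_N})=\mathcal D(A_0)$, obtain constants depending only on $\|V\|_{L^{n/2,\infty}}$ (resp.\ $\|V\|_{L^1}$ when $n=2$) and on the Kato norm of $V_-$, hence uniform in $N$, and then let $N\to\infty$ using resolvent convergence $A_{V_N}\to A_V$. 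Since $A_0\zeta_k(\sqrt{A_0})=2^{2k}\widetilde\zeta_k(\sqrt{A_0})$ with $\widetilde\zeta(\mu):=\mu^2\zeta(\mu)\in C_0^\infty((0,\infty))$, and $\widetilde\eta_j(\sqrt{A_V})$, $\widetilde\zeta_k(\sqrt{A_0})$ are bounded on $L^p$ uniformly in $j,k$ (again Proposition~\ref{prop:Lp-bound}), the ``principal'' term contributes $\le C\,2^{-2j}2^{2k}\|f\|_{L^p}=C\,2^{-2(j-k)}\|f\|_{L^p}$. For the second estimate with $k>j$ the mechanism is the mirror image: extract $A_0$ from $\eta_k(\sqrt{A_0})$, write $A_0=A_V-V$ on $\zeta_j(\sqrt{A_V})f$, use $A_V\zeta_j(\sqrt{A_V})=2^{2j}\widetilde\zeta_j(\sqrt{A_V})$, and the principal term again produces $2^{-2(k-j)}$.

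It remains to bound the ``$V$-term'' $2^{-2j}\widetilde\eta_j(\sqrt{A_V})\,V\,\zeta_k(\sqrt{A_0})f$ (and its analogue in the second family). Put $g:=\zeta_k(\sqrt{A_0})f\in L^p(\Omega)$. By the H\"older inequality \eqref{Holder} in Lorentz spaces,
\[
\|Vg\|_{L^p}=\|Vg\|_{L^{p,p}}\le \|V\|_{L^{n/2,\infty}}\,\|g\|_{L^{\ell,p}},\qquad \tfrac1\ell=\tfrac1p-\tfrac2n ,
\]
and this is exactly where the hypothesis on $p$ enters: one needs $0<\tfrac1\ell\le\tfrac1p$, i.e.\ $1\le p<n/2$, which is the range of part~(i) (when $n=2$ one uses instead the plain bound $\|Vg\|_{L^1}\le\|V\|_{L^1}\|g\|_{L^\infty}$). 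Since $\ell>p$, Lemma~\ref{lem:91} applied with $p_0=p$, exponent $\ell$ and second index $p$ gives $\|g\|_{L^{\ell,p}}=\|\zeta_k(\sqrt{A_0})f\|_{L^{\ell,p}}\le C\,2^{n(\frac1p-\frac1\ell)k}\|f\|_{L^p}=C\,2^{2k}\|f\|_{L^p}$, and for $n=2$ one uses \eqref{homog} of Lemma~\ref{lem:calc} to get $\|\zeta_k(\sqrt{A_0})f\|_{L^\infty}\le C\,2^{2k}\|f\|_{L^1}$. Applying finally the $L^p$-boundedness of $\widetilde\eta_j(\sqrt{A_V})$, the $V$-term is $\le C\,2^{-2j}2^{2k}\|f\|_{L^p}=C\,2^{-2(j-k)}\|f\|_{L^p}$, of the same order as the principal term; the $V$-term in the second family is estimated identically after interchanging the roles of $A_0$ and $A_V$ and of $j$ and $k$, using that Lemma~\ref{lem:91} covers $\zeta_j(\sqrt{A_V})$ as well. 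Combining this with the principal term, and then with the reductions and the duality of the first paragraph, completes the proof.

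I expect the genuine obstacle to be the rigorous justification of the commutator step, that is, the identity $A_V=A_0+V$ on the localized functions and the handling of unbounded-operator products: one cannot simply assert $\zeta_k(\sqrt{A_0})f\in\mathcal D(A_V)$, since under \eqref{ass:1_0} the operator domains $\mathcal D(A_0)$ and $\mathcal D(A_V)$ need not coincide even though the form domains do. The cleanest fix is the truncation-and-limit device described above; alternatively one may work throughout with the semigroup identity $e^{-tA_V}-e^{-tA_0}=-\int_0^t e^{-(t-s)A_0}V e^{-sA_V}\,ds$. A secondary, purely computational point is to calibrate the Lorentz exponents so that one lands back in the \emph{same} $L^p$: the relation $\tfrac1\ell=\tfrac1p-\tfrac2n$ for part~(i), and its dual counterpart $\tfrac1p+\tfrac2n\le 1$ for part~(ii), are precisely what single out the two stated ranges of $p$.
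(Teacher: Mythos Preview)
Your proposal is correct and follows essentially the same route as the paper: extract a factor of $A_V$ (resp.\ $A_0$) from the high-frequency localizer, split it as $A_0+V$ (resp.\ $A_V-V$), bound the principal term by the spectral calculus \eqref{homog}, and control the $V$-term via Lorentz--H\"older together with Lemma~\ref{lem:91} (or, for $n=2$, the plain $L^1\times L^\infty$ bound); part~(ii) is then obtained by duality from part~(i). The only notable difference is in how the identity $A_V=A_0+V$ on the localized function is justified: the paper invokes Lemma~\ref{lem:add} to interpret $\Phi_k(\sqrt{A_0})f=A_V^{-1}A_V\Phi_k(\sqrt{A_0})f$ in $\mathcal Z_V'(\Omega)$ and thereby sidesteps the domain question entirely, whereas you propose a truncation $V\mapsto V_N$ and a limit via resolvent convergence (or alternatively the Duhamel formula for the semigroups)---both devices are adequate, and your discussion of this point is in fact more explicit than the paper's.
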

\begin{proof}
[\bf Proof.] 
We prove only {\rm (i)}, since 
the estimates \eqref{908-3} and \eqref{908-4} are obtained by 
the duality argument for \eqref{908-2} and \eqref{908-1}, respectively. 

We first consider the case $ n =2$ and $p = 1$. 
We note from Lemma \ref{lem:add} that 
$$
\Phi_k (\sqrt{A_0 }) f = A_V^{-1} A_V \Phi_k (\sqrt{A_0 }) f 
\quad \text{in } \mathcal Z_V ' (\Omega). 
$$
Thanks to the estimate \eqref{homog} 
for $\alpha = 1$ and the assumption \eqref{ass:1_0} on $V$, 
a formal calculation implies that
\begin{equation}\label{910-2}
\begin{split}
\| \phi_j (\sqrt{A_V}) \Phi_k (\sqrt{A_0}) f \|_{L^1}
=& 
\| \phi_j (\sqrt{A_V}) A^{-1}_V A_V \Phi_k (\sqrt{A_0 }) f \|_{L^1}
\\
\leq & 
C 2^{ -2j} 
  \Big\{ \| A_0 \Phi_k (\sqrt{A_0}) f \|_{L^1}
       +\| V \Phi_k (\sqrt{A_0}) f \|_{L^1}
  \Big\}  
\\
\leq & 
C 2^{ -2j} 
  \Big\{ 2^{2k} \| f \|_{L^1}
       +\| V \|_{L^1} \| \phi_k (\sqrt{A_0}) f \|_{L^\infty}
  \Big\} 
\\
\leq & 
C 2^{ -2j} 2^{2k} \| f \|_{L^1},
\end{split}
\end{equation}
which proves \eqref{908-1}. 
As to the estimate \eqref{908-2}, 
again by using \eqref{homog} and the assumption \eqref{ass:1_0} on $V$, we estimate 
\begin{equation}
\begin{split}
\label{910-3}
& 
 \| \phi_k (\sqrt{A_0}) \Phi_j (\sqrt{A_V}) f 
\|_{L^1} 
\\
=& 
\| \phi_k (\sqrt{A_0}) A^{-1}_0 (A_V - V) \Phi_j (\sqrt{A_V}) f \|_{L^1} 
\\
\leq & 
 C 2^{-2k} 
    \Big\{ \| A_V \Phi_j (\sqrt{A_V}) f \|_{L^1} 
         + \| V \Phi_j (\sqrt{A_V}) f \|_{L^1} 
    \Big\}
\\
\leq& 
C 2^{-2k} 
    \Big\{ 2^{2j} \| f \|_{L^1} 
         + \| V \|_{L^1} \| \Phi_j (\sqrt{A_V}) f \|_{L^\infty} 
    \Big\}
\\
\leq& 
C 2^{-2k} 2^{2j} \| f \|_{L^1}. 
\end{split}
\end{equation}
This proves \eqref{908-2}.
Thus the estimate (i) for $n = 2$ and $p = 1$ is obtained. 

In the case when $n \geq 3$, we estimate by the use of the Lorentz spaces.  
As to the estimate \eqref{908-1}, by using the same argument as in \eqref{910-2},
we get 
\begin{equation}\label{EQ:F1} 
\begin{split}
\| \phi_j (\sqrt{A_V}) \Phi_k (\sqrt{A_0}) f \|_{L^p} 
 \leq C 2^{ -2j} 
   \big\{ 2^{2k} \| f \|_{L^p} 
         + \| V \Phi_k (\sqrt{A_0}) f \|_{L^{p}} 
   \big\}.
\end{split}
\end{equation}
Here, the H\"older inequalities \eqref{Holder1} and \eqref{Holder}
together with the estimate 
\eqref{910-1} in Lemma \ref{lem:91} imply that for $ p = 1$,
\begin{align}\label{910-4} 
\| V \Phi_k (\sqrt{A_0}) f \|_{L^{1}} 
\leq& \| V \|_{L^{\frac{n}{2},\infty}} \| \Phi_k (\sqrt{A_0}) f \|_{L^{\frac{n}{n-2},1}}\\
\leq& C 2^{2k} \| f \|_{L^1}, \nonumber
\end{align}
and for $p > 1$,
\begin{align}\label{910-5}
\| V \Phi_k (\sqrt{A_0}) f \|_{L^{p}} 
\leq& \| V \|_{L^{\frac{n}{2},\infty}} \| \Phi_k (\sqrt{A_0}) f \|_{L^{p_0,p}}\\
\leq& C 2^{2k} \| f \|_{L^p}, \nonumber
\end{align}
where $p_0$ is a real number with $1/p = 2/n + 1/p_0$. Then \eqref{908-1} is obtained 
by estimates \eqref{EQ:F1}--\eqref{910-5}. 
It remains to prove the estimate \eqref{908-2}.
By the same argument as in \eqref{910-3} we estimate
\begin{equation}\notag 
\begin{split}
\| \phi_k (\sqrt{A_0}) \Phi_j (\sqrt{A_V}) f \|_{L^p} 
\leq C 2^{-2k} 
    \Big\{ 2^{2j} \| f \|_{L^p} 
         + \| V \Phi_j (\sqrt{A_V}) f \|_{L^p} 
    \Big\} . 
\end{split}
\end{equation}
Here, it follows from the same argument as \eqref{910-4} and \eqref{910-5} that 
\begin{equation}\notag 
\| V \Phi_j (\sqrt{A_V}) f \|_{L^p} 
\leq C 2^{ 2j} \| f \|_{L^p}. 
\end{equation}
Then, \eqref{908-2} is 
a consequence of the above two estimates. 
The proof of Lemma \ref{lem:1020} is complete. 
\end{proof}

\begin{lem}\label{lem:1020-2}
Under the same assumptions as Lemma \ref{lem:1020}, the following assertions hold{\rm :} 
\begin{enumerate}
\item[(i)]
Let $1 \leq p < \infty$ and $0 \leq \alpha < \min \{2, n/p\}$. 
Then 
we have
\begin{equation}\label{1020-1}
\| \phi_j (\sqrt{A_V}) \Phi_k (\sqrt{A_0}) f \|_{L^p} 
\leq C 2^{-\alpha (j-k)} \| f \|_{L^p} ,
\end{equation}
\begin{equation} \label{1020-2}
\| \phi_k (\sqrt{A_0}) \Phi_j (\sqrt{A_V}) f \|_{L^p} 
\leq C 2^{-\alpha (k-j)} \| f \|_{L^p} .
\end{equation}
for any $j,k \in \mathbb Z$ and $f \in L^p (\Omega)$. 
\item[(ii)]
Let $1 < p \leq \infty$ and $0 \leq \alpha < \min \{2, n(1-1/p)\}$. 
Then we have
\begin{equation}\label{1020-3}
\| \phi_j (\sqrt{A_V}) \Phi_k (\sqrt{A_0}) f \|_{L^p} 
\leq C 2^{-\alpha (k-j)} \| f \|_{L^p} , 
\end{equation}
\begin{equation} \label{1020-4}
\| \phi_k (\sqrt{A_0}) \Phi_j (\sqrt{A_V}) f \|_{L^p} 
\leq C 2^{-\alpha (j-k)} \| f \|_{L^p} .
\end{equation}
for any $j,k \in \mathbb Z$ and $f \in L^p (\Omega)$. 
\end{enumerate}
\end{lem}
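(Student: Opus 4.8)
The plan is to deduce Lemma~\ref{lem:1020-2} from Lemma~\ref{lem:1020} by the Riesz--Thorin interpolation theorem, together with the trivial uniform $L^p$-bounds
\[
\sup_{j,k\in\mathbb Z}\big\|\phi_j(\sqrt{A_V})\Phi_k(\sqrt{A_0})\big\|_{L^p\to L^p}
+\sup_{j,k\in\mathbb Z}\big\|\phi_k(\sqrt{A_0})\Phi_j(\sqrt{A_V})\big\|_{L^p\to L^p}<\infty
\qquad(1\le p\le\infty),
\]
which follow from \eqref{homog} in Lemma~\ref{lem:calc} with $\alpha=0$ applied to both $A_V$ and $A_0$ (the factor $2^{n(1/r-1/p)j}$ there equals $1$ when $r=p$). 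It is enough to prove \eqref{1020-1} and \eqref{1020-3}: the estimates \eqref{1020-2} and \eqref{1020-4} follow in exactly the same way after interchanging the roles of $A_V$ and $A_0$ and of $j$ and $k$, using \eqref{908-2} and \eqref{908-4} in place of \eqref{908-1} and \eqref{908-3}.

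For \eqref{1020-1}, put $T_{j,k}:=\phi_j(\sqrt{A_V})\Phi_k(\sqrt{A_0})$. For $j\le k$ one has $2^{-\alpha(j-k)}\ge 1$, so the uniform bound above already gives \eqref{1020-1}; thus I would only have to treat $j\ge k$. If $1\le p<n/2$ (which forces $n\ge3$) then $\min\{2,n/p\}=2$, and \eqref{908-1} directly gives $\|T_{j,k}f\|_{L^p}\le C2^{-2(j-k)}\|f\|_{L^p}\le C2^{-\alpha(j-k)}\|f\|_{L^p}$ for $j\ge k$. If $p\ge n/2$ (or $n=2$), then $\min\{2,n/p\}=n/p$; given $\alpha<n/p$, I would pick $p_1$ in the range of \eqref{908-1} (so $1\le p_1<n/2$ for $n\ge3$, $p_1=1$ for $n=2$) with $p_1\ge\alpha p/2$, which is possible since $\alpha p/2<n/2$. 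Interpolating the bound $\|T_{j,k}\|_{L^{p_1}\to L^{p_1}}\le C2^{-2(j-k)}$ from \eqref{908-1} with $\|T_{j,k}\|_{L^\infty\to L^\infty}\le C$, so that $1/p=(1-\theta)/p_1$, yields
\[
\|T_{j,k}\|_{L^p\to L^p}\le C\,2^{-2(1-\theta)(j-k)}=C\,2^{-\frac{2p_1}{p}(j-k)}\le C\,2^{-\alpha(j-k)}\qquad(j\ge k),
\]
with $C$ independent of $j,k$ because the endpoint constants depend on $j,k$ only through the displayed powers of $2$. This proves \eqref{1020-1}.

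For \eqref{1020-3}, put $S_{j,k}:=\phi_j(\sqrt{A_V})\Phi_k(\sqrt{A_0})$; the decay is now claimed in the direction $k\ge j$, and for $k\le j$ the uniform bound suffices, so I would reduce to $k\ge j$. If $p$ is in the range of \eqref{908-3} (that is, $p>n/(n-2)$ for $n\ge3$ or $p=\infty$ for $n=2$) then $\min\{2,n(1-1/p)\}=2$ and \eqref{908-3} gives $\|S_{j,k}f\|_{L^p}\le C2^{-2(k-j)}\|f\|_{L^p}$ directly. Otherwise $1<p\le n/(n-2)$ (or $1<p<\infty$ when $n=2$), $\min\{2,n(1-1/p)\}=n(1-1/p)$, and given $\alpha<n(1-1/p)$ I would choose $p_1$ in the range of \eqref{908-3} with $p<p_1$. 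Interpolating $\|S_{j,k}\|_{L^{p_1}\to L^{p_1}}\le C2^{-2(k-j)}$ from \eqref{908-3} with $\|S_{j,k}\|_{L^1\to L^1}\le C$, so that $1/p=(1-\theta)/p_1+\theta$, gives
\[
\|S_{j,k}\|_{L^p\to L^p}\le C\,2^{-2(1-\theta)(k-j)},\qquad
2(1-\theta)=\frac{2(1-1/p)}{1-1/p_1}\ \longrightarrow\ n\Big(1-\tfrac1p\Big)\quad\text{as }p_1\downarrow\tfrac{n}{n-2}
\]
(and likewise $2(1-\theta)\to n(1-1/p)$ as $p_1\to\infty$ when $n=2$); taking $p_1$ close enough to the endpoint makes $2(1-\theta)\ge\alpha$, which gives \eqref{1020-3} for $k\ge j$.

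The one point that requires care is this last numerology: the interpolation endpoint $p_1$ must be chosen inside the $p$-range permitted by Lemma~\ref{lem:1020}, and one must verify that the interpolated decay exponent $2(1-\theta)$ tends to the critical value $\min\{2,n/p\}$ (respectively $\min\{2,n(1-1/p)\}$) as $p_1$ approaches the relevant endpoint of that range --- together with the small separate bookkeeping for $n=2$, where \eqref{908-1} and \eqref{908-3} are available only at $p=1$ and $p=\infty$ respectively. Everything else is routine.
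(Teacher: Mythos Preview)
Your argument is correct and follows essentially the same route as the paper's own proof: both combine the uniform $L^p$--$L^p$ bounds coming from \eqref{homog} (with $\alpha=0$) and the decay estimates of Lemma~\ref{lem:1020} via Riesz--Thorin interpolation, choosing the endpoint exponent $p_1$ so that the interpolated decay exponent exceeds the given~$\alpha$. The only cosmetic difference is that the paper does not split into the cases $j\le k$ and $j\ge k$ (it simply records the bound $C\,2^{-\alpha(j-k)}$ for all $j,k$, which is vacuous when $j<k$), and for $1\le p<n/2$ it phrases the reduction from exponent $2$ to $\alpha$ as the elementary inequality $\|T_{j,k}f\|_{L^p}=\|T_{j,k}f\|_{L^p}^{\alpha/2}\|T_{j,k}f\|_{L^p}^{1-\alpha/2}\le C\,2^{-\alpha(j-k)}\|f\|_{L^p}$ rather than invoking $2^{-2(j-k)}\le 2^{-\alpha(j-k)}$ on the half-line $j\ge k$; these are the same thing.
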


\begin{proof} 
[\bf Proof]
The strategy of the proof is to apply the 
Riesz-Thorin interpolation theorem 
to the estimates in Lemma \ref{lem:1020} and the following uniform estimates:
\begin{equation}\label{1020-5}
\| \phi_j (\sqrt{A_V}) \Phi_k (\sqrt{A_0}) f \|_{L^q}  
\leq C \| f \|_{L^q} , 
\end{equation}
\begin{equation}\label{EQ:1020-5}
\| \phi_k (\sqrt{A_0}) \Phi_j (\sqrt{A_V}) f \|_{L^q}  
\leq C \| f \|_{L^q} , 
\end{equation}
for all $j,k \in \mathbb Z$, 
 which are proved by \eqref{homog} 
for $\alpha = 0$. 

Let $0\leq \alpha<\min\{2,n/p\}$.
Then the proof of \eqref{1020-1}
for $1 \leq p < n/2$ is performed by combining \eqref{908-1} 
and \eqref{1020-5} with $q = p$.
In fact, we estimate  
\begin{align}\label{1020-6}
& 
\| \phi_j (\sqrt{A_V}) \Phi_k (\sqrt{A_0}) f \|_{L^p}
\\
=& \| \phi_j (\sqrt{A_V}) \Phi_k (\sqrt{A_0}) f \|_{L^p} ^{\frac{\alpha}{2}}
    \| \phi_j (\sqrt{A_V}) \Phi_k (\sqrt{A_0}) f \|_{L^p} ^{1-\frac{\alpha}{2}}
\nonumber \\
\leq & C \{2^{-2 (j-k)}\}^{\frac{\alpha}{2}} \| f \|_{L^p}
\nonumber \\
=& C 2^{-\alpha (j-k)} \| f \|_{L^p}.
\nonumber 
\end{align}
This proves \eqref{1020-1}.
In a similar way, by using \eqref{908-2} and 
\eqref{EQ:1020-5}, we get the estimate \eqref{1020-2}.
When $n/2 \leq p \leq \infty$, we apply the Riesz-Thorin interpolation theorem
to \eqref{1020-5} with $q = \infty$ and the estimate \eqref{908-1} 
together with the argument \eqref{1020-6}.

Finally, estimates \eqref{1020-3} and \eqref{1020-4} are proved in analogous way 
as in \eqref{1020-1} and \eqref{1020-2}, if we divide the cases into 
$n/(n-2) < p \leq \infty$ and $1 \leq p \leq n / (n-2)$. 
The proof of Lemma \ref{lem:1020-2} is complete. 
\end{proof}

\noindent
{\bf Remark.} 
When \eqref{ass:1} is not imposed on $V$, which is the assumption on the inhomogeneous Besov spaces,
the same estimates in Lemmas \ref{lem:91}--\ref{lem:1020-2} also hold for $j,k \in \mathbb N$, 
since the proof is done analogously by 
applying \eqref{inhom}, \eqref{1117-4_2} instead of 
\eqref{homog}, \eqref{1117-5}, respectively. 
\\

In what follows, we prove the equivalence relation between 
$\dot B^s_{p,q} (A_0)$ and $\dot B^s_{p,q} (A_V)$ 
under the assumption on $V$ in Proposition \ref{thm:6}.

\vskip3mm 

\noindent
{\bf Proof of the isomorphism:
\begin{equation}\label{EQ:Finalle}
\dot B^s_{p,q} (A_0) \cong \dot B^s_{p,q} (A_V). 
\end{equation}}

\noindent
{\bf The case: $s > 0$. }
First we prove that
\begin{equation}\label{EQ:Bes-1}
\dot B^s_{p,q} (A_0) \hookrightarrow \dot B^s_{p,q} (A_V)
\end{equation}
for any $s > 0$. 
To begin with, for any 
$f \in \dot B^s_{p,q} (A_0)$, we show that 
\begin{equation}\label{908-5}
f = \sum _{ j \in \mathbb Z} \phi_j (\sqrt{A_V}) f 
 \quad \text{in } \mathcal Z'_V (\Omega) . 
\end{equation}
To see \eqref{908-5}, 
we consider the formal identity 
\begin{equation}\label{EQ:formal-g}
_{\mathcal Z'_V}\langle f, g 
 \rangle _{\mathcal Z_V} 
 = \sum _{j \in \mathbb Z} \, _{\mathcal Z'_V} 
 \langle f, \phi_j (\sqrt{A_V}) g 
 \rangle _{\mathcal Z_V} 
 = \sum _{j \in \mathbb Z} \, _{\mathcal Z'_V} 
 \langle \phi_j (\sqrt{A_V}) f, g 
 \rangle _{\mathcal Z_V} ,
\end{equation}
where the first identity is deduced from 
Lemma \ref{lem:decomposition1} {\rm (ii)}. 
Note that 
\begin{equation}\label{0128-1}
f = \sum _{ k \in \mathbb Z} \phi_k (\sqrt{A_0}) f 
\quad \text{in } \mathcal Z_0 ' (\Omega) 
\end{equation}
by Lemma \ref{lem:decomposition1} {\rm (ii)}. 
Plugging \eqref{0128-1} into \eqref{EQ:formal-g}, we can write formally 
$$
_{\mathcal Z'_V}\langle f, g 
 \rangle _{\mathcal Z_V} 
 = \sum _{j \in \mathbb Z} \sum _{ k \in \mathbb Z} \, _{\mathcal Z'_V} 
 \langle \phi_k (\sqrt{A_0}) f, \phi_j (\sqrt{A_V}) g 
 \rangle _{\mathcal Z_V} . 
$$
Then it is sufficient to show that for any $g \in \mathcal Z_V (\Omega)$
\begin{equation}\label{1120-1}
\sum _{j \in \mathbb Z} \sum _{ k \in \mathbb Z} 
 \big| \, _{\mathcal Z'_V}\langle \phi_k (\sqrt{A_0}) f, \phi_j (\sqrt{A_V})g 
 \rangle _{\mathcal Z_V} \big| 
 \leq C \| f \|_{\dot B^s_{p,q} (A_0)} \| g \|_{\dot B^{-s}_{p',q'} (A_V)}  , 
\end{equation} 
since 
$$
\mathcal Z _V (\Omega) \hookrightarrow \dot B^{-s}_{p',q'} (A_V) .
$$
Let $\Phi_j := \phi_{j-1} + \phi_j + \phi_{j+1}$. 
By using $\phi_j = \phi_j \Phi_j$ and 
H\"older's inequality we estimate
\begin{align}\label{1020-7}
& 
\sum _{j \in \mathbb Z} \sum _{ k \in \mathbb Z} 
 \big| \, _{\mathcal Z_V'}\langle \phi_k (\sqrt{A_0}) f, \phi_j (\sqrt{A_V})g 
 \rangle _{\mathcal Z_V} \big|
\\
=&
\sum _{j \in \mathbb Z} \sum _{ k \in \mathbb Z} 
 \big| \, _{\mathcal Z_V'}\langle \phi_j (\sqrt{A_V}) \phi_k (\sqrt{A_0}) f  , 
                         \Phi_j (\sqrt{A_V})g \rangle _{\mathcal Z_V} 
 \big|
\nonumber \\
\leq & 
\Big\{ \sum _{j \in \mathbb Z} 
        \Big( 2^{sj}  \sum _{ k \in \mathbb Z} 
             \big\| \phi_j (\sqrt{A_V}) \Phi_k ( \sqrt{A_0})\phi_k (\sqrt{A_0}) f \big\|_{L^p}\Big)^q
     \Big\}^{\frac{1}{q}}
\nonumber \\
& \quad \times \nonumber
\Big\{ \sum _{j \in \mathbb Z} 
        \Big( 2^{-sj} 
             \big\| \Phi_j (\sqrt{A_V}) g 
             \big\|_{L^{p'}}
        \Big)^{q'}
     \Big\}^{\frac{1}{q'}}\\ 
     =: & I(s,f) \times II(s,g). 
\nonumber
\end{align}
The estimate of the second factor $II(s,g)$ is an immediate consequence of 
the definition of norm of Besov spaces $\dot B^{-s}_{p',q'} (A_V)$, that is, we have 
\begin{equation}\label{EQ:0g}
II(s,g) \leq C \| g \|_{\dot B^{-s}_{p',q'} (A_V)}. 
\end{equation}
As to the first factor $I(s,f)$, 
applying \eqref{1020-1}, 
we have, for any $j \in \mathbb Z$
\[
             \big\| \phi_j (\sqrt{A_V}) \Phi_k ( \sqrt{A_0})\phi_k (\sqrt{A_0}) f 
             \big\|_{L^p}
             \leq C 
\begin{cases}
2^{-\alpha (j-k)} \| \phi_k (\sqrt{A_0}) f  \|_{L^p} 
& \quad \text{if } k \leq j,
\\
 \| \phi_k (\sqrt{A_0}) f  \|_{L^p} 
& \quad \text{if } k \geq j ,
\end{cases}
\]
where $\alpha$ is a fixed constant such that $s < \alpha < \min \{ 2, n/p \}$. 
For the sake of simplicity, we 
put 
\begin{equation}\label{a_k}
a_k := \| \phi_k (\sqrt{A_0}) f  \|_{L^p}.
\end{equation}
When $k \leq j$, by using the above estimate, we estimate the first factor $I(s,f)$
in \eqref{1020-7} as 
\begin{equation}\label{1020-8}
\begin{split}
I(s,f)\leq & C \Big\{ \sum _{ j \in \mathbb Z} 
          \Big( 2^{sj} \sum _{k \leq j} 2^{-\alpha (j-k)} a_k \Big) ^q 
       \Big\}^{\frac{1}{q}}\\
=
&
C \Big\{ \sum _{ j \in \mathbb Z} 
          \Big( \sum _{k' \geq  0} 2^{-(\alpha -s) k' } 2^{s(j-k')}a_{j-k'} \Big) ^q 
       \Big\}^{\frac{1}{q}}
\\
\leq &
C \sum _{k' \geq  0} 2^{-(\alpha -s) k'} 
       \Big\{ \sum _{ j \in \mathbb Z} 
          \Big(  2^{s(j-k')}a_{j-k'} \Big) ^q 
       \Big\}^{\frac{1}{q}}
\\
\leq 
&
 C \| f \|_{\dot B^s_{p,q}}, 
\end{split}
\end{equation}
and when $k \geq j$, we have
\begin{equation}\label{1020-9}
\begin{split}
I(s,f)\leq& C \Big\{ \sum _{ j \in \mathbb Z} 
          \Big( 2^{sj} \sum _{k \geq j}  a_k \Big) ^q 
       \Big\}^{\frac{1}{q}}\\
=
&
  C \Big\{ \sum _{ j \in \mathbb Z} 
          \Big( \sum _{k' \leq  0} 2^{s k'} 2^{s(j-k')}a_{j-k'} \Big) ^q 
       \Big\}^{\frac{1}{q}}
\\
\leq
&
    C \sum _{k' \leq  0} 2^{s k'} 
       \Big\{ \sum _{ j \in \mathbb Z} 
          \Big(  2^{s(j-k')}a_{j-k'} \Big) ^q 
       \Big\}^{\frac{1}{q}}
\\
\leq
&
  C \| f \|_{\dot B^s_{p,q}}. 
\end{split}
\end{equation}
Summarizing \eqref{EQ:0g}--\eqref{1020-9},
we conclude that the series \eqref{EQ:formal-g}
is absolutely convergent, and hence, 
the identity \eqref{908-5} is justified. 
Also, as a consequence of 
\eqref{1020-8} and \eqref{1020-9}, 
we obtain  
\begin{align*} 
\| f \|_{\dot B^s_{p,q}(A_V)} 
\leq& \Big\{ \sum _{j \in \mathbb Z} 
        \Big( 2^{sj}  \sum _{ k \in \mathbb Z} 
             \big\| \phi_j (\sqrt{A_V}) \phi_k (\sqrt{A_0}) f 
             \big\|_{L^p}
        \Big)^q
     \Big\}^{\frac{1}{q}}\\
\leq& C \| f \|_{\dot B^s_{p,q}(A_0)} .
\end{align*}
Therefore, the embedding \eqref{EQ:Bes-1} holds.

It is also possible to show the embedding 
$$
\dot B^s_{p,q} (A_V) \hookrightarrow \dot B^s_{p,q} (A_0)
$$
by the same argument as above, if we 
apply \eqref{1020-2} instead of \eqref{1020-1}. 
The proof of isomorphism 
\eqref{EQ:Finalle} for $s > 0$ 
is complete. \\

\noindent 
{\bf The case: $s < 0$. }
In this case, the argument for 
$s>0$ works well. The only difference is to 
obtain estimates corresponding to 
\eqref{1020-8} and \eqref{1020-9}, 
so that we concentrate on proving that 
\begin{equation}\label{1020-10}
\Big\{ \sum _{j \in \mathbb Z} 
        \Big( 2^{sj}  \sum _{ k \in \mathbb Z} 
             \big\| \phi_j (\sqrt{A_V}) \Phi_k ( \sqrt{A_0})\phi_k (\sqrt{A_0}) f 
             \big\|_{L^p}
        \Big)^q
     \Big\}^{\frac{1}{q}}
 \leq C \| f \|_{\dot B^s_{p,q} (A_0)}. 
\end{equation}
It follows from \eqref{1020-1} that for any $j \in \mathbb Z$
\[
\big\| \phi_j (\sqrt{A_V}) \Phi_k ( 
\sqrt{A_0})\phi_k (\sqrt{A_0}) f 
             \big\|_{L^p}
\leq C 
\begin{cases}
 \| \phi_k (\sqrt{A_0}) f  \|_{L^p} 
& \quad \text{if } k \leq j , 
\\
2^{-\alpha (k-j)} \| \phi_k (\sqrt{A_0}) f  \|_{L^p} 
& \quad \text{if } k \geq j ,
\end{cases}
\]
where $\alpha$ is a fixed constant such that $|s| < \alpha < \min \{ 2, n(1-1/p) \}$. 
Then, by using the above estimate and recalling the definition \eqref{a_k} of $a_k$, we have 
for $k \leq j$, 
\begin{equation}\notag 
\begin{split}
& 
\Big\{ \sum _{j \in \mathbb Z} 
        \Big( 2^{sj}  \sum _{ k \leq j} 
             \big\| \phi_j (\sqrt{A_V}) \Phi_k ( \sqrt{A_0})\phi_k (\sqrt{A_0}) f 
             \big\|_{L^p}
        \Big)^q
     \Big\}^{\frac{1}{q}}
\\
\leq & C \Big\{ \sum _{ j \in \mathbb Z} 
          \Big( 2^{sj} \sum _{k \leq j} a_k 
          \Big) ^q 
       \Big\}^{\frac{1}{q}}
       \\
   =
   & 
   C \Big\{ \sum _{ j \in \mathbb Z} 
          \Big( 2^{sj} \sum _{k' \geq  0}  a_{j-k'} \Big) ^q 
       \Big\}^{\frac{1}{q}}
\\
=& C \Big\{ \sum _{ j \in \mathbb Z} 
          \Big( \sum _{k' \geq  0} 2^{s k'} 2^{s(j-k')}a_{j-k'} \Big) ^q 
       \Big\}^{\frac{1}{q}}
\\
\leq 
&
 C \sum _{k' \geq  0} 2^{s k} 
       \Big\{ \sum _{ j \in \mathbb Z} 
          \Big(  2^{s(j-k')}a_{j-k'} \Big) ^q 
       \Big\}^{\frac{1}{q}}
\\
\leq & C \| f \|_{\dot B^s_{p,q}(A_0)}, 
\end{split}
\end{equation}
and in the case when $k \geq j$, we estimate 
\begin{equation}\notag 
\begin{split}
& 
\Big\{ \sum _{j \in \mathbb Z} 
        \Big( 2^{sj}  \sum _{ k \geq j} 
             \big\| \phi_j (\sqrt{A_V}) \Phi_k (\sqrt{A_0})\phi_k (\sqrt{A_0}) f 
             \big\|_{L^p}
        \Big)^q
     \Big\}^{\frac{1}{q}}
\\
\leq & C \Big\{ \sum _{ j \in \mathbb Z} 
          \Big( 2^{sj} \sum _{k \geq j} 2^{-\alpha (k-j)} a_k \Big) ^q 
       \Big\}^{\frac{1}{q}}
\\
   = 
   &
    C \Big\{ \sum _{ j \in \mathbb Z} 
          \Big( 2^{sj} \sum _{k' \leq  0} 2^{\alpha k'} a_{j-k' } \Big) ^q 
       \Big\}^{\frac{1}{q}}
\\
=& C \Big\{ \sum _{ j \in \mathbb Z} 
          \Big( \sum _{k' \leq  0} 2^{(\alpha + s) k'} 2^{s(j-k')}a_{j-k'} \Big) ^q 
       \Big\}^{\frac{1}{q}}
\\
\leq 
&
C \sum _{k' \leq  0} 2^{(\alpha + s) k'} 
       \Big\{ \sum _{ j \in \mathbb Z} 
          \Big(  2^{s(j-k')}a_{j-k'} \Big) ^q 
       \Big\}^{\frac{1}{q}}
\\
\leq & C \| f \|_{\dot B^s_{p,q}(A_0)}. 
\end{split}
\end{equation}
Therefore, the estimate \eqref{1020-10} is verified, and the proof of 
the isomorphism 
\eqref{EQ:Finalle} for $s < 0$ 
is finished.\\

\noindent 
{\bf The case: $s = 0$. }
In this case
we have only to show the corresponding 
estimates to \eqref{1020-10}.
Since $1 < p < \infty$, Lemma \ref{lem:1020-2} implies that 
\[ 
\| \phi_j (\sqrt{A_V}) \Phi_k (\sqrt{A_0}) f \|_{L^p} 
\leq C 2^{-\alpha |j-k|} \| f \|_{L^p} , 
\]
\[
\| \phi_k (\sqrt{A_0}) \Phi_j (\sqrt{A_V})  f \|_{L^p} 
\leq C 2^{-\alpha |j-k|} \| f \|_{L^p} , 
\]
where $0 < \alpha < \min \{2, n/p , n (1-1/p) \}$. 
Then it follows from 
Young's inequality that 
\begin{equation}\notag 
\begin{split}
& 
\Big\{ \sum _{j \in \mathbb Z} 
        \Big(  \sum _{ k \in \mathbb Z} 
             \big\| \phi_j (\sqrt{A_V}) \Phi_k ( \sqrt{A_0 })\phi_k (\sqrt{A_0}) f 
             \big\|_{L^p}
        \Big)^q
     \Big\}^{\frac{1}{q}}
\\
\leq & 
C
\Big\{ \sum _{j \in \mathbb Z} 
        \Big(  \sum _{ k \in \mathbb Z} 
             2^{-\alpha |j-k|} \big\| \phi_k (\sqrt{A_0}) f 
             \big\|_{L^p}
        \Big)^q
     \Big\}^{\frac{1}{q}}
\\
\leq 
& 
C \Big( \sum _{j \in \mathbb Z} 2^{-\alpha |j|} \Big)
     \Big\{ \sum _{ k \in \mathbb Z} 
           \big\| \phi_k (\sqrt{A_0}) f \big\|_{L^p} ^q 
     \Big\} ^{\frac{1}{q}}
\\
\leq & 
C \| f \|_{\dot B^0_{p,q}(A_0)}. 
\end{split}
\end{equation}
Therefore, the case $s = 0$ also holds.
Thus the proof of isomorphism \eqref{EQ:Finalle} 
for homogeneous case is finished.
\hfill $\square$

\vskip3mm 

Let us now prove the inhomogeneous case.\\

\noindent
{\bf Proof of the isomorphism:
\begin{equation}\label{EQ:Fin-Finale} 
 B^s_{p,q} (A_0) \cong B^s_{p,q} (A_V). 
 \end{equation}}
 
The proof of \eqref{EQ:Fin-Finale} 
is similar to the homogeneous case.
Indeed, as to the proof of the 
embedding 
$$B^s_{p,q} (A_0) \hookrightarrow 
B^s_{p,q} (A_V),$$ 
the main point is to show that for any $f \in B^s_{p,q} (A_0)$, 
\begin{equation}\notag 
f = \psi (A_V) f + \sum _{ j \in \mathbb N} \phi_j (\sqrt{A_V}) f 
 \quad \text{in } \mathcal X'_V (\Omega) . 
\end{equation}
This identity is obtained by using the following estimate: 
\begin{equation}\notag 
\begin{split}
& 
 \big| \, _{\mathcal X'_V}\langle \psi (A_0) f, \psi (A_V) )g 
 \rangle _{\mathcal X_V} \big| 
+ \sum _{j \in \mathbb N} \sum _{ k \in \mathbb N} 
 \big| \, _{\mathcal X'_V}\langle \phi_k (\sqrt{A_0}) f, \phi_j (\sqrt{A_V})g 
 \rangle _{\mathcal X_V} \big| 
\\
& 
+ \sum _{j \in \mathbb N} 
 \big| \, _{\mathcal X'_V}\langle \psi_k (A_0) f, \phi_j (\sqrt{A_V})g 
 \rangle _{\mathcal X_V} \big| 
+ \sum _{ k \in \mathbb N} 
 \big| \, _{\mathcal X'_V}\langle \phi_k (\sqrt{A_0}) f, \psi (A_V) )g 
 \rangle _{\mathcal X_V} \big| 
\\
 \leq 
 & C \| f \|_{B^s_{p,q} (A_V)} \| g \|_{B^{-s}_{p',q'} (A_V)}  
\end{split}
\end{equation} 
 for any $g \in \mathcal X_V (\Omega)$. 
The proof of 
the above estimate is analogous to those of \eqref{1020-8} and \eqref{1020-9} 
by taking the sum over $j,k \in \mathbb N$. So we may omit the details. 
Thus we conclude 
\eqref{EQ:Fin-Finale}.
\hfill $\square$


\appendix
 \section{($L^p$-boundedness, self-adjointness and pointwise estimates for $e^{-tA_V}$)} 
 \label{App:AppendixA}

We discuss the uniform $L^p$-boundedness of $\phi (\theta A_V)$ in this appendix. 

\begin{prop}\label{prop:Lp-bound}
Let $\phi \in \mathcal{S}(\mathbb{R})$ and $1 \le p \le \infty$. 
\begin{enumerate}
\item[(i)] Assume that $V$ satisfies \eqref{1104-1}. Then
\begin{equation}\label{bd:inh}
 \sup _{ 0 < \theta \leq 1} 
 \| \phi (\theta A_V) \|_{L^p \to L^p} 
 < \infty . 
\end{equation}

\item[(ii)] 
Assume that $V$ satisfies \eqref{1104-1} and \eqref{ass:1}. 
Then
\begin{equation}\label{bd:hom}
 \sup _{ 0 < \theta < \infty} 
 \| \phi (\theta A_V) \|_{L^p \to L^p} 
 < \infty . 
\end{equation}
\end{enumerate}
\end{prop}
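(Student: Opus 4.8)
The plan is to deduce both estimates from the Gaussian upper bound for the kernel of $e^{-tA_V}$ provided by Lemma~\ref{lem:pointwise}, together with the self-adjoint functional calculus and the standard theory of spectral multipliers for operators carrying heat kernel bounds. First I would reduce to a \emph{non-negative} operator. In case (ii) this is automatic: under \eqref{ass:1} the operator $A_V$ is non-negative on $L^2(\Omega)$, and \eqref{1117-5} gives a genuine Gaussian bound $|e^{-tA_V}(x,y)|\le G_t(x-y)$ with the correct parabolic scaling, uniformly for all $t>0$. In case (i) one only has $A_V\ge-\lambda_0^2 I$ by \eqref{EQ:A-bound} and the bound \eqref{1117-4_2} carries an extra factor $e^{ct}$; restricting to $0<\theta\le 1$, I would set $B:=A_V+\lambda_0^2 I\ge 0$, note that $e^{-tB}=e^{-t\lambda_0^2}e^{-tA_V}$ enjoys a uniform Gaussian bound for $0<t\le 1$ (the $e^{ct}$ factor being harmless there), and write $\phi(\theta A_V)=h_\theta(\theta B)$ with $h_\theta(\mu):=\phi(\mu-\theta\lambda_0^2)$, observing that $\{h_\theta\}_{0<\theta\le 1}$ is an equicontinuous, hence bounded, family in $\mathcal S(\mathbb R)$ because $0<\theta\lambda_0^2\le\lambda_0^2$. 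In both cases the task is thereby reduced to the following: for a non-negative self-adjoint $B$ whose semigroup has a Gaussian kernel bound, and for a bounded family of Schwartz multipliers $g$, one has $\sup_\theta\|g(\theta B)\|_{L^p\to L^p}<\infty$ for $1\le p\le\infty$.

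For $1<p<\infty$ this is classical: the Gaussian bound places $B$ within the scope of the Calder\'on--Zygmund machinery for operators without pointwise kernels (in the spirit of Duong--McIntosh and Blunck--Kunstmann), and any Schwartz function satisfies the scale-invariant H\"ormander condition, so $g(\theta B)$ is bounded on $L^p$ with a constant controlled by finitely many Schwartz semi-norms of $g$, uniformly in $\theta$. The endpoints $p=1$ and $p=\infty$ need the quantitative kernel estimate $\sup_{y\in\Omega}\int_\Omega|K_{g(\theta B)}(x,y)|\,dx\le C$, uniformly in $\theta$; then $p=\infty$ follows by duality, since $A_V$ is self-adjoint and one may work with $\overline{\phi}$ in place of $\phi$. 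I would obtain this by a weighted $L^2$ (Plancherel-type) estimate for the kernel: by parabolic scaling it suffices to treat $\theta\approx 1$, where one shows $\int_\Omega(1+|x-y|^2)^{s}\,|K_{g(B)}(x,y)|^2\,dx\le C$ for some $s>n/2$, combining the trivial bound $\|g(B)\|_{L^2\to L^2}\le\|g\|_{L^\infty}$ with off-diagonal decay, after which Cauchy--Schwarz yields the $L^1$ bound. The off-diagonal decay is produced either from the finite propagation speed of the wave group $\cos(t\sqrt B)$, by writing $g(B)=\nu(\sqrt B)=(2\pi)^{-1}\int_{\mathbb R}\widehat\nu(t)\cos(t\sqrt B)\,dt$ with $\nu(\lambda):=g(\lambda^2)$ even and Schwartz, or directly from the Gaussian bound via iterated commutators with the weight; these are precisely the computations carried out in \cite{IMT-preprint}, which I would reproduce with the modifications needed to cover all dimensions $n\ge 1$ and the potential class \eqref{1104-1}.

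The main obstacle is this endpoint kernel estimate on a \emph{general} open set $\Omega$ with Dirichlet boundary conditions and with the rough potential allowed by \eqref{1104-1}: one must check that $\cos\!\big(t\sqrt{A_V+\lambda_0^2 I}\big)$ still propagates with finite speed and that $V_+\in L^1_{\rm loc}(\Omega)$ together with the Kato-class negative part do not destroy the off-diagonal control --- this is exactly where the self-adjointness from Lemma~\ref{lem:s.a.} and the Gaussian kernel bound of Lemma~\ref{lem:pointwise} do the work, and where \cite{IMT-preprint} is invoked. A second delicate point is the uniformity in $\theta$: the limit $\theta\to 0$ (and $\theta\to\infty$ in case (ii)) is controlled only because the Gaussian bound has the correct scaling, and case (ii) genuinely requires $A_V\ge 0$, i.e. assumption \eqref{ass:1}, since for large $\theta$ the shift $\theta\lambda_0^2$ used in case (i) cannot be absorbed into a bounded family of Schwartz multipliers. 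With these ingredients in place, \eqref{bd:inh} and \eqref{bd:hom} follow.
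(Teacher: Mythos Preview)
Your proposal is correct and follows the same route the paper indicates: the paper omits the proof entirely, stating that it is ``similar to that of our previous works \cite{IMT-preprint,IMT-ISAAC} by using Lemmas~\ref{lem:s.a.}--\ref{lem:pointwise},'' and your sketch is precisely an outline of that machinery --- reduce to a non-negative operator by shifting, feed the Gaussian heat-kernel bound of Lemma~\ref{lem:pointwise} into the standard spectral-multiplier framework, and get the endpoint $p=1$ (hence $p=\infty$ by duality) from an $L^1$ bound on the kernel obtained via weighted $L^2$ off-diagonal estimates. One phrasing to tighten: when you write ``by parabolic scaling it suffices to treat $\theta\approx 1$,'' remember there is no geometric dilation on a general open set $\Omega$; the ``scaling'' lives entirely in the $t$-dependence of the Gaussian bound $G_t$, and that is what makes the weighted $L^2$ kernel estimate uniform in $\theta$ --- but this is exactly how the argument in \cite{IMT-preprint} proceeds, so the point is only cosmetic.
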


\noindent 
{\bf Remark.} 
\noindent 
We note that the potential like 
\begin{equation}\notag 
V(x) \simeq - c |x|^{-2} \quad \text{as } |x | \to \infty , 
\quad c > 0
\end{equation}
is very interesting. However, it 
is excluded from assumption \eqref{ass:1} on $V$. 
The reason is that the uniform boundedness in Proposition~\ref{prop:Lp-bound} 
would not be generally obtained, 
since 
$$
\lim _{t \to \infty}\| e^{-t A_V} \|_{L^p \to L^p} = \infty
$$
for some $p \not = 2$ which was proved in \cite{IIY-2013,IIY-2015}.

\vskip3mm

The proof of Proposition \ref{prop:Lp-bound} is similar to that of 
our previous works \cite{IMT-preprint,IMT-ISAAC} 
by 
using Lemmas \ref{lem:s.a.}--\ref{lem:pointwise} below 
and we may omit the complete proof of Proposition \ref{prop:Lp-bound}.
So, we shall concentrate on the proof of the self-adjointness of 
$A_V$ and the pointwise estimate of integral kernel of 
$e^{-tA_V}$, which need certain adjustment to the method 
in \cite{IMT-ISAAC}.
\\

Let us prove that $A_V$ is self-adjoint on $L^2 (\Omega)$. 
Following the argument in \cite{ReeSim_1972} (see also \cite{IMT-preprint}), 
we 
consider the quadratic form $q$ 
defined by letting
$$
q(u,v):=
\int_{\Omega} \nabla u(x)\cdot \overline{\nabla v(x)} \, dx 
 + \int_{\Omega} V(x)u(x)\overline{v(x)} \, dx, \quad u,v\in \mathcal{Q}(q),
$$
where $\mathcal Q(q) := 
\{ u \in H^1_0 (\Omega) \, | \, \sqrt{V_+} u \in L^2 (\Omega) \}$. 

\begin{lem}\label{lem:s.a.}
Assume that the measurable potential 
$V$ satisfies \eqref{1104-1}. 
Then there exists a self-adjoint operator $A_V$ on $L^2 (\Omega)$ 
such that 
\begin{equation}\notag 
\begin{cases}
\mathcal{D}(A_V)
= \left\{ u \in \mathcal{Q}(q)\, \Big| \, 
  ^\exists w_u \in L^2 (\Omega) \text{ s.t. } 
  \displaystyle \int_{\Omega} w_u \overline{v} \,dx = q(u,v)
  \text{ for all } v\in \mathcal Q (q) \right\},
\\
A_V u = w_u  \quad \text{for } u \in\mathcal{D}(A_V) . 
\end{cases}
\end{equation}
Moreover, $A_V$ is semi-bounded, i.e., 
there exists a constant $\lambda _ 0 > 0$ such that 
$$A_V \geq - \lambda _0 ^2 I.$$
\end{lem}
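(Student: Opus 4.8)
The plan is to realize $A_V$ as the self-adjoint operator associated, via the first representation theorem for sesquilinear forms, with the form $q$ written in the lemma, and to obtain its semi-boundedness from the KLMN theorem. The single analytic input that matters is that the Kato-class hypothesis $V_-\in K_n(\Omega)$ makes $V_-$ \emph{infinitesimally form-bounded} relative to the Dirichlet Laplacian; everything else is soft form theory.

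First I would handle the ``positive'' part. Consider
\[
q_+(u,v):=\int_\Omega \nabla u(x)\cdot\overline{\nabla v(x)}\,dx+\int_\Omega V_+(x)u(x)\overline{v(x)}\,dx
\]
on the domain $\mathcal Q(q)=\{u\in H^1_0(\Omega)\mid \sqrt{V_+}\,u\in L^2(\Omega)\}$. Since $V_+\in L^1_{\rm loc}(\Omega)$, the space $C_0^\infty(\Omega)$ is contained in $\mathcal Q(q)$, so $\mathcal Q(q)$ is dense in $L^2(\Omega)$; and $q_+$ is nonnegative and closed, being the sum of the closed Dirichlet form $u\mapsto\|\nabla u\|_{L^2}^2$ (with form domain $H^1_0(\Omega)$) and the closed form $u\mapsto\|\sqrt{V_+}\,u\|_{L^2}^2$ of multiplication by the nonnegative function $V_+$, taken on the intersection of their form domains.

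The main step is to prove that for every $\varepsilon>0$ there is $C_\varepsilon>0$ with
\[
\int_\Omega |V_-(x)|\,|u(x)|^2\,dx\le \varepsilon\,\|\nabla u\|_{L^2(\Omega)}^2+C_\varepsilon\|u\|_{L^2(\Omega)}^2,\qquad u\in H^1_0(\Omega).
\]
Extending $u$ by zero to $\mathbb R^n$ and expressing the left-hand side through the kernel of $(-\Delta+\mu)^{-1}$ on $\mathbb R^n$, this reduces to bounding the operator norm of $\sqrt{V_-}\,(-\Delta+\mu)^{-1}\sqrt{V_-}$; one splits the convolution kernel into its restriction to $\{|x-y|<r\}$ and to $\{|x-y|\ge r\}$. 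The near-diagonal part is dominated (Schur test) by the Kato quantity
\[
\sup_{x\in\Omega}\int_{\Omega\cap\{|x-y|<r\}}\frac{|V_-(y)|}{|x-y|^{n-2}}\,dy
\]
for $n\ge 3$, and by the logarithmic, respectively $L^1$, analogues from \eqref{1104-1} for $n=2$, $n=1$, all of which vanish (are finite and small) as $r\to0$; the far part is bounded by $C(r)\mu^{-1}$. Choosing $r$ small and then $\mu$ large gives the displayed infinitesimal bound. Since $q_+(u,u)\ge\|\nabla u\|_{L^2}^2$, the same holds with $\|\nabla u\|_{L^2}^2$ replaced by $q_+(u,u)$, i.e.\ $V_-$ is infinitesimally $q_+$-bounded.

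Finally, by the KLMN theorem the form $q=q_+-\int_\Omega V_-|u|^2$ on $\mathcal Q(q)$ is densely defined, closed and bounded below: taking $\varepsilon=\tfrac12$ above gives $q(u,u)\ge-\lambda_0^2\|u\|_{L^2}^2$ with $\lambda_0^2:=C_{1/2}$. The first representation theorem then yields a unique self-adjoint operator $A_V$ with $A_V\ge-\lambda_0^2 I$ whose domain and action are exactly those in the lemma, and this is compatible with the description $\mathcal D(A_V)=\{f\in H^1_0(\Omega)\mid \sqrt{V_+}f,\ A_Vf\in L^2(\Omega)\}$ used in \eqref{EQ:ID}. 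I expect the infinitesimal form-bound of the third step to be the only real obstacle, the delicate points being the uniform treatment of all dimensions $n\ge1$ (the borderline logarithmic kernel for $n=2$ and the $n=1$ case, where one rather uses the local embedding $H^1\hookrightarrow L^\infty$ together with $\sup_{x}\int_{\Omega\cap\{|x-y|<1\}}|V_-(y)|\,dy<\infty$) and the passage from $\Omega$ to $\mathbb R^n$ by zero extension so that the whole-space kernel estimates apply.
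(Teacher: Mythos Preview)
Your proposal is correct and follows essentially the same route as the paper: both arguments reduce to showing that the form $q$ is densely defined, closed and semi-bounded, with the Kato-class infinitesimal form-bound $\int_\Omega V_-|u|^2\le\varepsilon\|\nabla u\|_{L^2}^2+C_\varepsilon\|u\|_{L^2}^2$ as the only analytic input, and then invoke the representation theorem (the paper cites Theorem~VIII.15 of Reed--Simon). The only cosmetic difference is the grouping: the paper writes $q=q_1+q_2$ with $q_1=\int|\nabla u|^2-\int V_-|u|^2$ (closed by KLMN, quoted from their earlier preprint) and $q_2=\int V_+|u|^2$, whereas you group $q_+=\int|\nabla u|^2+\int V_+|u|^2$ first and then subtract $V_-$ via KLMN; either order works and the content is identical.
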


\noindent 
{\bf Remark. } 
We define $A_V u \in L^2 (\Omega)$  for $u \in \mathcal Q(q) $ 
if $u \in \mathcal D (A_V)$. Then $\mathcal D (A_V)$ is simply rewritten as 
\begin{equation}\notag 
\begin{split}
\mathcal D (A_V) 
& 
= \{ u \in \mathcal Q (q) \, | \, A_V u \in L^2 (\Omega)\}
\\
& 
= \{ u \in H^1_0 (\Omega) \, | \, \sqrt{V_+} u \in L^2 (\Omega), A_V u \in L^2 (\Omega) \}. 
\end{split}
\end{equation}
This is nothing but the identity 
\eqref{EQ:ID} given in \S 1. 
\\

To prove Lemma \ref{lem:s.a.}, we need the following lemma.

\begin{lem}\label{lem:0215-1}
{\rm (}\cite{DP-2005,IMT-preprint,Simon-1982}{\rm)}
Assume that $V_- $ is in the Kato class $K_n (\Omega)$. 
Then for any $\varepsilon > 0$ there exists $\lambda_0 > 0$ such that 
\begin{equation}\label{0215-1}
\int_{\Omega} V_- (x) |f (x)|^2 \, dx 
\leq \varepsilon \| \nabla f \|_{L^2} ^2 
   + \lambda _0 ^2 \| f \|_{L^2} ^2 
\end{equation}
for any $f \in H^1_0 (\Omega)$. 
\end{lem}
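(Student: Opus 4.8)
The plan is to reduce the claimed form inequality to an operator--norm bound for a symmetrized resolvent of $-\Delta$ on $\mathbb R^n$, and then to read off that bound from the Kato condition via a Schur test. First I would discard the geometry of $\Omega$: extending $f\in H^1_0(\Omega)$ by zero gives $\tilde f\in H^1(\mathbb R^n)$ with $\|\nabla\tilde f\|_{L^2(\mathbb R^n)}=\|\nabla f\|_{L^2(\Omega)}$ and $\|\tilde f\|_{L^2(\mathbb R^n)}=\|f\|_{L^2(\Omega)}$, and extending $V_-$ by zero gives $W:=\widetilde{V_-}\ge0$, supported in $\overline\Omega$, which one checks lies in $K_n(\mathbb R^n)$ (for $x\notin\Omega$ and $y\in\Omega$ with $|x-y|<r$, picking $x'\in\overline\Omega$ nearest to $x$ yields $|x-y|\ge\tfrac12|x'-y|$, so the supremum defining the Kato norm over $x\in\mathbb R^n$ is dominated by the one over $x\in\Omega$ with radius $2r$). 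Thus it suffices to prove: for $W\in K_n(\mathbb R^n)$ and every $\varepsilon>0$ there is $\mu>0$ with $\int_{\mathbb R^n}W|g|^2\le\varepsilon\big(\|\nabla g\|_{L^2}^2+\mu\|g\|_{L^2}^2\big)$ for all $g\in H^1(\mathbb R^n)$; the lemma then follows with $\lambda_0^2:=\varepsilon\mu$, where $\mu$ is chosen depending on $\varepsilon$.

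Second, I would pass through the bounded truncations $W_k:=\min(W,k)$. For each $k$ the operator $W_k^{1/2}(-\Delta+\mu)^{-1/2}$ is genuinely bounded, so
\[
\int W_k|g|^2=\big\|W_k^{1/2}(-\Delta+\mu)^{-1/2}(-\Delta+\mu)^{1/2}g\big\|_{L^2}^2\le\big\|W_k^{1/2}(-\Delta+\mu)^{-1/2}\big\|^2\big(\|\nabla g\|_{L^2}^2+\mu\|g\|_{L^2}^2\big),
\]
and $\big\|W_k^{1/2}(-\Delta+\mu)^{-1/2}\big\|^2=\big\|W_k^{1/2}(-\Delta+\mu)^{-1}W_k^{1/2}\big\|$. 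The last operator has the nonnegative symmetric kernel $W_k^{1/2}(x)\,G_\mu(x-y)\,W_k^{1/2}(y)$, where $G_\mu$ is the kernel of $(-\Delta+\mu)^{-1}$, so a weighted Schur test (weight $(W_k+\delta)^{1/2}$, then $\delta\downarrow0$, using $\int_{\mathbb R^n}G_\mu=\mu^{-1}$) gives
\[
\big\|W_k^{1/2}(-\Delta+\mu)^{-1}W_k^{1/2}\big\|\le\sup_{x}\int_{\mathbb R^n}G_\mu(x-y)\,W(y)\,dy=:b_\mu ,
\]
uniformly in $k$. Letting $k\to\infty$ by monotone convergence reduces everything to showing $b_\mu\le\varepsilon$ for $\mu$ large.

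Finally, I would estimate $b_\mu$ by splitting the $y$-integral at $|x-y|=r$ and invoking the standard behaviour of the Bessel--potential kernel: $G_\mu(z)\le c_n|z|^{2-n}e^{-\sqrt\mu\,|z|}$ for $n\ge3$, $G_\mu(z)\le c\big(1+\log_+|z|^{-1}\big)e^{-\sqrt\mu\,|z|}$ for $n=2$, and $G_\mu(z)\le c\mu^{-1/2}e^{-\sqrt\mu\,|z|}$ for $n=1$. On $\{|x-y|<r\}$ this yields, uniformly in $\mu\ge1$, a bound by exactly the quantity in the definition of $K_n(\mathbb R^n)$ (respectively by $\sup_x\int_{|x-y|<1}W$ when $n=1$), hence $<\varepsilon/2$ once $r$ is small; on $\{|x-y|\ge r\}$, using the uniform local integrability $M:=\sup_x\int_{B_1(x)}W<\infty$ (itself a consequence of the Kato condition) together with the exponential decay, the integral is $\le C_{n,r}\,M\sum_{m\ge\lfloor r\rfloor}m^{n-1}e^{-c\sqrt\mu\,m}\to0$ as $\mu\to\infty$. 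Fixing $r$ first and then $\mu$ large gives $b_\mu<\varepsilon$, which completes the argument.

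The step I expect to be the genuine obstacle is the kernel estimate for $b_\mu$: matching the diagonal singularity of $G_\mu$ with the precise weight $|x-y|^{2-n}$ (respectively $\log|x-y|^{-1}$, or boundedness when $n=1$) occurring in the definition of the Kato class, and obtaining the far--region decay uniformly in $x$. The operator--theoretic reduction, while requiring the truncation step to stay honest about unbounded multiplication, is otherwise routine; this is why the statement can simply be cited from \cite{DP-2005,IMT-preprint,Simon-1982}. An alternative to the resolvent route would be to use the equivalent characterization of $K_n$ through $\lim_{t\to0}\sup_x\int_0^t\!\!\int e^{s\Delta}(x,y)W(y)\,dy\,ds=0$ and derive the same form bound via the heat semigroup.
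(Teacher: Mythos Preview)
Your proposal is correct and follows the paper's approach: both arguments reduce to $\mathbb R^n$ by extending $f$ and $V_-$ by zero, after which the paper simply cites \cite{DP-2005,Simon-1982} for the $\mathbb R^n$ form bound, whereas you supply a correct sketch of that cited result via the symmetrized resolvent and a Schur test on the Bessel-potential kernel. Your verification that the zero extension of $V_-$ still lies in $K_n(\mathbb R^n)$ is a detail the paper leaves implicit.
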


\noindent 
{\bf Proof. } 
By the density argument, we may take $f \in C_0^\infty (\Omega)$. 
Let 
$\tilde{f}$ and $\tilde{V}_{-}$ be the 
zero extension of $f$ and $V_{-}$ to 
$\mathbb R^n$, respectively. 
Then \eqref{0215-1} is equivalent to 
$$
\int_{\mathbb R^n} \tilde V_- (x) |\tilde f (x)|^2 \, dx 
\leq \varepsilon \| \nabla \tilde f \|_{L^2 (\mathbb R^n)} ^2 
   + \lambda _0 ^2 \| \tilde f \|_{L^2 (\mathbb R^n)} ^2 . 
$$
Since this inequality is proved in \cite{DP-2005,Simon-1982}, we may omit the details. 
The proof of Lemma~\ref{lem:0215-1} is finished. 
\hfill $\square$

\begin{proof}
[\bf Proof of Lemma \ref{lem:s.a.}] 
It suffices to show that the quadratic form $q$ is closed and semi-bounded 
by Theorem VIII.15 in \cite{ReeSim_1972} (see also Lemma 2.3 in \cite{IMT-preprint}). 
We first show that $q$ is closed. Put
\[
q_1(u,v)=\int_{\Omega} \nabla u(x)\cdot \overline{\nabla v(x)} \, dx 
- \int_{\Omega} V_-(x)u(x)\overline{v(x)} \, dx, \quad u,v\in \mathcal{Q}(q_1):=H^1_0(\Omega),
\]
\[
q_2(u,v)=\int_{\Omega} V_+(x)u(x)\overline{v(x)} \, dx, \quad 
u,v\in \mathcal{Q}(q_2) :=\{u\in L^2(\Omega)\, |\, \sqrt{V_+}u\in L^2(\Omega)\}.
\]
Then we get 
\[
q(u,v)=q_1(u,v)+q_2(u,v), \quad u,v\in \mathcal{Q}(q)=\mathcal{Q}(q_1)\cap\mathcal{Q}(q_2).
\]
Since $q_1$ is closed (see Proposition 2.1 in \cite{IMT-preprint}) and 
the sum of two closed quadratic forms is also closed, 
it is enough to show that $q_2$ is closed. 

We show that $q_2$ is closed. 
Put $q_2(u)=q_2(u,u)$ for simplicity. 
Assume that 
\[
u\in L^2(\Omega), \quad u_j\in\mathcal{Q}(q_2), \quad 
q_2(u_j-u_k)\to 0, \quad \|u_j-u\|_{L^2}\to0 
\quad \text{as } j,k\to\infty, 
\]
and we prove that 
\begin{equation}\label{1113-1}
u\in\mathcal{Q}(q_2) \quad \text{and} \quad q_2(u_j-u)\to0\quad \text{as } j \to \infty.
\end{equation}
Since $\{\sqrt{V_+}u_j\}_{j=1}^\infty$ is a Cauchy sequence in $L^2(\Omega)$, there exists $v\in L^2(\Omega)$ such that 
\[
\sqrt{V_+}u_j\to v\quad \text{in } L^2(\Omega). 
\]
Hence the sequence $\{\sqrt{V_+}u_j\}_{j=1}^\infty$ converges to $v$ almost everywhere 
along a subsequence denoted by the same, namely, 
$$
\sqrt{V_+}u_j (x) \to v (x) \quad \text{for almost every } x \in \Omega 
\text{ as } j \to \infty . 
$$
On the other hand, since any convergent sequence in $L^2 (\Omega)$ contains 
a subsequence which converges almost everywhere in $\Omega$, it follows that 
\[
\sqrt{V_+}u_j (x) \to \sqrt{V_+}u (x) \quad \text{for almost every } x \in \Omega  
\text{ as } j \to \infty .  
\]
Summarizing three convergences obtained now, 
we get $\sqrt{V_+} u = v \in L^2 (\Omega)$. This proves \eqref{1113-1}.

Finally we prove that $q$ is semi-bounded. Let $u\in\mathcal{D}(A_V)$. 
By the estimate \eqref{0215-1}, 
for any $\varepsilon > 0$ there exists $\lambda_0 > 0$ such that 
\begin{equation}\notag 
\begin{split}
\int_{\Omega} (A_V u) \overline{u} \, dx 
&\geq \int_{\Omega} |\nabla u|^2dx 
 - \int_{\Omega}V_-|u|^2\,dx\\
&\ge\|\nabla u\|^2_{L^2}-\varepsilon\|\nabla u\|^2_{L^2}-\lambda_0^2 \|u\|^2_{L^2}.
\end{split}
\end{equation}
Taking $\varepsilon=1$, we obtain
\begin{equation}
\notag 
\int_{\Omega} (A_V u) \overline{u} \, dx 
\geq -\lambda_0^2 \|u\|^2_{L^2(\Omega)},
\end{equation}
which implies that $q$ is semi-bounded.
This ends the proof of Lemma \ref{lem:s.a.}.
\end{proof}

As to the pointwise estimate on the kernel of $e^{-tA_V}$, 
we have the following. 

\begin{lem}\label{lem:pointwise}
The integral kernel $e^{-tA_V} (x,y)$
of the semi-group $\{e^{-tA_V}\}_{t \geq 0}$ 
enjoys 
the following estimates{\rm :}
\begin{enumerate}
\item[(i)] Assume that $V$ satisfies \eqref{1104-1}. 
Then there exist constants 
$ \omega, C > 0$ such that 
\begin{equation}\notag 
|e^{-tA_V} (x,y)| 
\leq C e^{\omega t} t^{-\frac{n}{2}} \exp \Big( -\frac{|x-y|^2}{Ct} \Big)  
\end{equation}
for any $t>0$ and $x,y\in\Omega$.
In particular, we have
\begin{equation}\label{1117-4_2}
|e^{-tA_V} (x,y)| 
\leq C t^{-\frac{n}{2}} \exp \Big( -\frac{|x-y|^2}{Ct} \Big) 
\quad \text{if }  \,\, 0 < t \leq 1.
\end{equation}

\item[(ii)] Assume that $V$ satisfies \eqref{1104-1} and \eqref{ass:1}. 
Then there exists $ C > 0$ such that 
\begin{equation}\label{1117-5}
|e^{-tA_V} (x,y)| 
\leq C t^{-\frac{n}{2}} \exp \Big( -\frac{|x-y|^2}{Ct} \Big)  
\end{equation}
for any $t>0$ and $x,y\in\Omega$.
\end{enumerate}
\end{lem}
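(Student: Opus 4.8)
The plan is to obtain \eqref{1117-4_2} and \eqref{1117-5} from the Feynman--Kac representation of $e^{-tA_V}$, comparing with the Gaussian kernel of $-\Delta$ on $\mathbb{R}^n$. Write $(B_s)_{s\ge0}$ for Brownian motion normalized so that its transition density is $p_t^{\mathbb{R}^n}(x-y)=(4\pi t)^{-n/2}\exp(-|x-y|^2/4t)$, let $\tau_\Omega$ denote its first exit time from $\Omega$, and let $p_t^\Omega(x,y)$ be the integral kernel of $e^{t\Delta_{|D}}$. Domain monotonicity of killed Brownian motion gives $0\le p_t^\Omega(x,y)\le p_t^{\mathbb{R}^n}(x-y)$ for all $t>0$ and $x,y\in\Omega$, and, since $V_+\ge0$ and $V_+\in L^1_{\mathrm{loc}}(\Omega)$, the Feynman--Kac formula (with $V_+$ approximated by $V_+\wedge k$) shows that the kernel of $e^{-t(-\Delta_{|D}+V_+)}$ is dominated pointwise by $p_t^\Omega(x,y)$. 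Thus everything reduces to absorbing the factor produced by the negative part $V_-\in K_n(\Omega)$.

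\textbf{Part (i).} First I would record the Feynman--Kac bound $|e^{-tA_V}(x,y)|\le p_t^\Omega(x,y)\,\mathbb{E}^{t}_{x,y}\!\big[\exp\big(\int_0^t V_-(B_s)\,ds\big)\big]$, where $\mathbb{E}^t_{x,y}$ is the (killed) Brownian bridge expectation. Splitting $[0,t]$ at $t/2$, using the Chapman--Kolmogorov identity $p_t^\Omega=\int p_{t/2}^\Omega(\cdot,z)\,p_{t/2}^\Omega(z,\cdot)\,dz$ to retain the Gaussian decay, and Cauchy--Schwarz to replace the bridge expectation by unconditioned functionals, the estimate follows once one controls $u(t):=\sup_{x\in\Omega}\mathbb{E}_x\big[\exp\big(\int_0^{t}V_-(B_s)\,ds\big)\big]$. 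Here the Kato condition on $V_-$ (with the logarithmic weight when $n=2$, and as uniform local $L^1$ when $n=1$) gives, via Khasminskii's lemma, a time $t_0>0$ and $\alpha\in(0,1)$ with $\sup_x\mathbb{E}_x[\int_0^{t_0}V_-(B_s)\,ds]\le\alpha$, hence $u(t_0)\le(1-\alpha)^{-1}$; iterating over consecutive intervals of length $t_0$ yields $u(t)\le C e^{\omega t}$ with $\omega=t_0^{-1}\log(1-\alpha)^{-1}$. Combining this with the Dirichlet--Gaussian bound produces $|e^{-tA_V}(x,y)|\le Ce^{\omega t}t^{-n/2}\exp(-|x-y|^2/(Ct))$ for all $t>0$, which for $0<t\le1$ gives \eqref{1117-4_2} after absorbing $e^{\omega t}\le e^\omega$ into $C$.

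\textbf{Part (ii).} For \eqref{1117-5} the only new issue is to remove the growth $e^{\omega t}$, i.e.\ to make $u(t)$ bounded uniformly in $t\in(0,\infty)$. When $n=1,2$ assumption \eqref{ass:1} forces $V_-=0$, so $u\equiv1$ and \eqref{1117-5} is immediate from the comparison above. When $n\ge3$, the explicit smallness constant in \eqref{ass:1} is calibrated precisely so that $\sup_{x}\int_0^\infty\mathbb{E}_x[V_-(B_s)]\,ds<1$ (using $\int_0^\infty p_s^{\mathbb{R}^n}(x-y)\,ds=\tfrac{\Gamma(n/2-1)}{4\pi^{n/2}}|x-y|^{2-n}$ and a Cauchy--Schwarz accounting of the bridge), so that Khasminskii's lemma applies on the half-line and yields $\sup_{t>0}u(t)<\infty$; the same constant also sharpens Lemma~\ref{lem:0215-1} to a uniform form bound $\int_\Omega V_-|f|^2\,dx\le\theta\|\nabla f\|_{L^2}^2$ with $\theta<1$, so that $A_V$ is non-negative, consistently with \eqref{EQ:A-bound}. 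Feeding $\sup_{t>0}u(t)<\infty$ into the splitting argument of Part~(i) gives \eqref{1117-5} for all $t>0$. An alternative I would fall back on, if the bridge bookkeeping becomes awkward, is to deduce from $\int V_-|f|^2\le\theta\|\nabla f\|_{L^2}^2$ a global Nash/Sobolev inequality for $A_V$, hence the on-diagonal bound $\|e^{-tA_V}\|_{L^1\to L^\infty}\le Ct^{-n/2}$ for all $t>0$, and then upgrade it to the full Gaussian estimate via the Davies--Gaffney $L^2$ off-diagonal bound (available because $A_V$ is a local operator) and the standard Davies/Grigor'yan argument.

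\textbf{The main obstacle.} The substantive point is Part (ii): turning the abstract Kato smallness of \eqref{ass:1} into a bound on $u(t)$ \emph{uniform} in $t$, with the sharp constant (the value $\pi^{n/2}/\Gamma(n/2-1)$ is exactly what this method affords). A secondary technical nuisance, present already in Part (i), is that $\Omega$ is an \emph{arbitrary} open set, so one must justify the Feynman--Kac representation, the domain monotonicity, and the locality needed for Davies--Gaffney for the Dirichlet realization $A_V$ constructed in Lemma~\ref{lem:s.a.} without any regularity of $\partial\Omega$. These are exactly the adjustments to \cite{IMT-ISAAC,IMT-preprint} alluded to above; the rest of the argument is routine.
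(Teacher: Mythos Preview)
Your approach is correct and self-contained, but it is genuinely different from the paper's. The paper does not carry out a Feynman--Kac/Khasminskii argument at all: for part~(i) in every dimension, and for part~(ii) when $n=1,2$, it simply cites Ouhabaz~\cite{O-2006}; for part~(ii) with $n\ge 3$ it sets $V_*:=-V_-$, invokes the Gaussian bound for $e^{-tA_{V_*}}$ already proved in \cite{IMT-preprint} (under exactly the smallness \eqref{ass:1}), and then shows the \emph{semigroup domination} $0\le e^{-tA_V}(x,y)\le e^{-tA_{V_*}}(x,y)$ by a parabolic energy/maximum-principle computation: with $u^{(1)}=e^{-tA_{V_*}}f$, $u^{(2)}=e^{-tA_V}f$ and $u_-=-\min\{u^{(1)}-u^{(2)},0\}$, one checks $\tfrac{d}{dt}\|u_-\|_{L^2}^2\le 0$ using only the nonnegativity of $V_+$ and the form bound for $V_-$, so $u_-\equiv 0$. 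Thus the paper separates the two roles of $V$: the negative part is handled once (in the cited works, by methods essentially equivalent to yours), and the positive part is then absorbed by a clean PDE comparison.

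What each route buys: your Feynman--Kac argument is self-contained and makes transparent why the constant in \eqref{ass:1} is what it is---it is the threshold (after the bridge Cauchy--Schwarz you mention) for $\sup_{t,x,y}\mathbb{E}^t_{x,y}[\exp\int_0^tV_-(B_s)\,ds]<\infty$; note, incidentally, that the raw half-line Khasminskii bound with \eqref{ass:1} gives $\sup_x\int_0^\infty\mathbb{E}_x[V_-(B_s)]\,ds<\tfrac14$, not $<1$, and the extra factor is precisely what the bridge accounting consumes. The paper's route is shorter on the page (it outsources the hard analysis) and replaces all probabilistic bookkeeping on an arbitrary open set by a single $L^2$ energy identity, which is robust and avoids the technical justification of Feynman--Kac for the Dirichlet realization that you flagged as a nuisance. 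Your Nash/Davies--Gaffney fallback is also valid and closer in spirit to what \cite{IMT-preprint,DP-2005} actually do for $V_*$.
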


\begin{proof}
[\bf Proof] 
Since the assertions {\rm (i)} with $n \geq 1$ and {\rm (ii)} with $n = 1,2$ were already proved 
in \cite{O-2006},  
it is enough to show {\rm (ii)} in the case when $n \geq 3$. 
We prove the lemma for $n \geq 3$ in a formal way for the sake of simplicity. 
For more rigorous argument, see \cite{IMT-preprint,IMT-ISAAC}. 

Put 
$$V_* := - V_- . 
$$
It is proved in Proposition 3.1 from \cite{IMT-preprint} that if $V_*$ 
satisfies assumption (2.3), then 
\begin{equation} \notag 
 |e^ {-t A_{V_*}} (x,y)| 
 \leq C t^{-\frac{n}{2}} e^{-\frac{|x-y|^2}{Ct}} 
 \quad \text{for } t > 0 , \quad  x , y \in \Omega . 
\end{equation}
Once the following inequality 
\begin{equation}\label{129-2}
|e^{-t A_{V}} (x,y)| 
\leq |e^{-t A_{V_*}} (x,y)|
\end{equation}
is proved, the proof of the lemma is complete. 
So, we prove \eqref{129-2}. 
Let 
\begin{gather}
\notag 
u^{(1)} (t) := e^{-tA_{V_*}} f, \quad u^{(2)} (t) := e^{-tA_{V}} f, 
\\ \notag 
u (t) := u^{(1)} (t) - u^{(2)} (t), \quad 
u_- (t) := - \min \{  u (t), 0 \}, 
\end{gather}
where $f \in C_0 ^\infty (\Omega)$ is non-negative. 
Note that $u^{(1)} (t) \in \mathcal D (A_{V_*})$ and $u^{(2)} (t) \in \mathcal D(A_V)$ 
for any $t > 0$.  
An explicit calculation and 
non-negativity of $A_{V_*}$ imply 
 that 
\begin{equation}\notag 
\begin{split}
\frac{1}{2}\frac{d}{dt} \int_{\Omega} \big( u_- \big) ^2 \, dx 
& 
= - \int_{\Omega} (\partial_t u)  u_- \, dx
\\
& 
= \int_{\Omega} \big( A_{V_*} u  \big) u_- \, dx 
  - \int_{\Omega} V_+ u^{(2)} (t) u_-  \, dx  
\\
& 
= - \int_{\Omega} \big( |\nabla u_-|^2 - V_- (u_{-} )^2 \big)\, dx 
  - \int_{\Omega} V_+ u^{(2)} (t) u_-  \, dx  
\\
& \leq - \int_{\Omega} V_+ u^{(2)} (t) u_-  \, dx .
\end{split}
\end{equation}
For the negative part of $u^{(2)}$, i.e., 
$$
u^{(2)}_- (t):= - \min\{ u^{(2)} (t), 0 \},
$$
it follows from the analogous argument to the above that 
\begin{equation}\notag 
\begin{split}
\frac{1}{2}\frac{d}{dt} \int_{\Omega} \big( u^{(2)}_- \big) ^2 \, dx 
& 
= \int_{\Omega} \big( A_V u^{(2)} \big) u^{(2)}_- \, dx 
\\
& 
= - \int_{\Omega} \big ( |\nabla u^{(2)}_-| ^2 - V_- (u^{(2)}_-)^2 \big) 
   \, dx
   - \int_{\Omega} V_+ \big ( u^{(2)}_2 \big) ^2 \, dx
\\
& \leq 0 . 
\end{split}
\end{equation}
The above two inequalities imply that 
$\| u_- (t) \|_{L^2} ^2$ and $\| u^{(2)}_- (t) \|_{L^2} ^2$ do not increase.  
Hence we conclude that
$$
u_- (t) = u^{(2)}_- (t) = 0  \quad \text{in } L^2 (\Omega)
$$
for all $ t \geq 0$,  since 
$u_- (0) = 0$, $u^{(2)}_- (0) = 0$. 
Therefore, we get 
$$
0 \leq e^{-tA_{V}} f \leq  e^{-tA_{V_*}} f. 
$$
For each point $x_0 \in\Omega$, by taking $f = f_k$ ($k = 1,2,\cdots$) 
which tend to 
the delta function supported at $x_0$, 
we see that the kernel of $e^{-tA_{V}} $ is bounded by that of 
$e^{-tA_{V_*}} $. 
Thus \eqref{129-2} is proved. 
This ends the proof of Lemma \ref{lem:pointwise}.
\end{proof}

\section{}
In this appendix we prove that zero is not an eigenvalue of $A_V$.  
\label{App:AppendixB}

\begin{lem}\label{lem:zero}
Assume that $V$ satisfies \eqref{1104-1} and 
\begin{gather}
\label{ass:B_1}
\begin{cases} 
V _- = 0
& \quad \text{if } n = 1,2, 
\\
\displaystyle \sup _{x \in \Omega} 
   \int_{\Omega} \dfrac{|V_- (y)|}{|x-y|^{n-2}} \, dy 
 < \dfrac{ 4 \pi^{\frac{n}{2}}}{\Gamma (n/2 -1)}
& \quad \text{if } n \geq 3 , 
\end{cases}
\end{gather}
where $\Gamma (\cdot)$ is the Gamma function. 
Then $A_V$ is non-negative on $L^2 (\Omega)$, and 
zero is not an eigenvalue of $A_V$. 
\end{lem}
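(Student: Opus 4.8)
The plan is to reduce everything to a single quantitative form estimate. Write $c_n:=\Gamma(n/2-1)/(4\pi^{n/2})$ and, for $n\ge 3$, $N:=\sup_{x\in\Omega}\int_{\Omega}|V_-(y)|\,|x-y|^{2-n}\,dy$, so that hypothesis \eqref{ass:B_1} is precisely $c_nN<1$ (and for $n=1,2$ one has $V_-=0$, so nothing is needed there since the Green function $c_n|x-y|^{2-n}$ of $-\Delta$ on $\mathbb R^n$ has $1/c_n=4\pi^{n/2}/\Gamma(n/2-1)$ as in \eqref{ass:B_1}). I claim it suffices to establish
$$
\int_{\Omega}V_-(x)|u(x)|^2\,dx\le c_nN\int_{\Omega}|\nabla u(x)|^2\,dx
\qquad\text{for every }u\in H^1_0(\Omega),
$$
with the right-hand side read as $0$ when $n=1,2$. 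Indeed, every $u\in\mathcal Q(q)\subset H^1_0(\Omega)$ would then satisfy
$$
q(u,u)=\|\nabla u\|_{L^2}^2+\|\sqrt{V_+}\,u\|_{L^2}^2-\int_{\Omega}V_-|u|^2\,dx\ge(1-c_nN)\|\nabla u\|_{L^2}^2\ge0 ,
$$
and since $A_V$ is the self-adjoint operator associated with the form $q$ (Lemma~\ref{lem:s.a.}), non-negativity of $q$ on $\mathcal Q(q)$ yields $A_V\ge0$. If in addition $A_Vu=0$ for some $u\in\mathcal D(A_V)$, then $0=\langle A_Vu,u\rangle=q(u,u)\ge(1-c_nN)\|\nabla u\|_{L^2}^2$, and as $c_nN<1$ strictly this forces $\nabla u=0$; extending $u$ by zero to $\widetilde u\in H^1(\mathbb R^n)$ makes $\widetilde u$ a.e.\ constant, hence $\widetilde u=0$ since $\widetilde u\in L^2(\mathbb R^n)$, so $u=0$. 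Thus zero would not be an eigenvalue of $A_V$.

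So the real task is the displayed estimate, and only $n\ge3$ needs work. First I would reduce by density to $u\in C_0^\infty(\Omega)$ and extend $u$ and $V_-$ by zero to $\mathbb R^n$, noting that $\{V_->0\}\subset\Omega$ and $\int_{\mathbb R^n}V_-(y)|x-y|^{2-n}\,dy\le N$ for $x\in\{V_->0\}$. Writing $h:=(-\Delta)^{1/2}u$, so that $\|h\|_{L^2}^2=\|\nabla u\|_{L^2}^2$ and $u=(-\Delta)^{-1/2}h$, one has
$$
\int_{\Omega}V_-|u|^2\,dx=\big\langle(-\Delta)^{-1/2}V_-(-\Delta)^{-1/2}h,\,h\big\rangle\le\big\|(-\Delta)^{-1/2}V_-(-\Delta)^{-1/2}\big\|\,\|\nabla u\|_{L^2}^2 ,
$$
and $\|(-\Delta)^{-1/2}V_-(-\Delta)^{-1/2}\|=\|V_-^{1/2}(-\Delta)^{-1}V_-^{1/2}\|$ (both equal $\|(-\Delta)^{-1/2}V_-^{1/2}\|^2$). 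Since the Green function of $-\Delta$ on $\mathbb R^n$ is $c_n|x-y|^{2-n}$, the operator $V_-^{1/2}(-\Delta)^{-1}V_-^{1/2}$ on $L^2(\{V_->0\})$ has the nonnegative symmetric kernel $c_nV_-^{1/2}(x)V_-^{1/2}(y)|x-y|^{2-n}$, and the Schur test with weight $w(x):=V_-^{1/2}(x)$ gives
$$
\big\|V_-^{1/2}(-\Delta)^{-1}V_-^{1/2}\big\|\le\sup_{x\in\{V_->0\}}\frac{1}{w(x)}\int_{\mathbb R^n}\frac{c_n V_-^{1/2}(x)V_-^{1/2}(y)}{|x-y|^{n-2}}\,w(y)\,dy=c_n\sup_{x\in\{V_->0\}}\int_{\mathbb R^n}\frac{V_-(y)}{|x-y|^{n-2}}\,dy\le c_nN ,
$$
which is exactly the required bound.

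The step I expect to be the real obstacle is making these operator manipulations rigorous for a merely Kato-class $V_-$: one must give $(-\Delta)^{-1/2}V_-^{1/2}$ a precise meaning on a dense domain and justify the bilinear identities above. I would handle this by truncation — replace $V_-$ by $V_-^{(k)}:=\min\{V_-,k\}\,\mathbf 1_{\{|x|<k\}}$, which is bounded with compact support, so that everything is elementary; observe that the Schur constant cannot increase, hence $\|(V_-^{(k)})^{1/2}(-\Delta)^{-1}(V_-^{(k)})^{1/2}\|\le c_nN$ uniformly in $k$; and then pass to the limit in the form estimate by monotone convergence. It is worth recording, consistently with the remark following Proposition~\ref{prop:Lp-bound}, that the finiteness $N<\infty$ is precisely what excludes the borderline Hardy potential $V(x)=-c|x|^{-2}$, for which $N=+\infty$, and that the displayed inequality is a quantitative sharpening of Lemma~\ref{lem:0215-1}.
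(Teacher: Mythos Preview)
Your proof is correct and follows essentially the same route as the paper: reduce to the form estimate $\int_\Omega V_-|u|^2\le c_nN\|\nabla u\|_{L^2}^2$, deduce $q(u,u)\ge(1-c_nN)\|\nabla u\|_{L^2}^2\ge0$, and conclude that $A_V\ge0$ and that $A_Vu=0$ forces $\nabla u=0$, hence $u=0$ in $H^1_0(\Omega)$. The only difference is that the paper quotes the form estimate as a separate lemma (Lemma~\ref{lem:V_-}, attributed to \cite{DP-2005,IMT-preprint}) without proof, whereas you supply a self-contained argument via $\|V_-^{1/2}(-\Delta)^{-1}V_-^{1/2}\|$ and the Schur test with weight $V_-^{1/2}$; your derivation is a clean way to see where the constant $\Gamma(n/2-1)/(4\pi^{n/2})$ comes from.
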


For the difference between assumptions \eqref{ass:1} and \eqref{ass:B_1}, 
we refer to Proposition~\ref{prop:Lp-bound} and Lemma~\ref{lem:pointwise} {\rm (ii)} 
(cf. Proposition~3.1 in \cite{IMT-preprint} and Proposition~5.1 in \cite{DP-2005}). 
\\

To prove Lemma \ref{lem:zero}, we need the following lemma.

\begin{lem}\label{lem:V_-} 
{\rm (}\cite{DP-2005,IMT-preprint}{\rm)} 
Assume that $n \geq 3$. Suppose that $V_-$ satisfies 
$$
\| V_- \|_{K_n (\Omega)} 
:= \sup_{x \in \Omega} \int_{\Omega} \frac{| V_-(y)|}{|x-y|^{n-2}} \,dy < \infty. 
$$
Then 
\begin{equation}\label{EQ:V-}
 \int_{\Omega} V_- (x) |f(x)|^2 \, dx 
\leq \frac{\Gamma(n/2 - 1) \|V_-\|_{K_{n}(\Omega)}}{4\pi^{n/2}}
  \|\nabla f\|^2_{L^2(\Omega)}
\end{equation}
for any $f \in H^1_0 (\Omega)$. 
\end{lem}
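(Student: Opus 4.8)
The plan is to recast the desired inequality as an $L^2$-operator norm estimate for an explicit nonnegative integral operator and then evaluate that norm by a weighted Schur test; the sharp constant $c_n := \Gamma(n/2-1)/(4\pi^{n/2})$ will appear because $c_n|x|^{2-n}$ is the fundamental solution of $-\Delta$ on $\mathbb R^n$ and the Schur-test integral is exactly the Kato-class quantity $\|V_-\|_{K_n(\Omega)}$.

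First I would make two reductions. By density in $H^1_0(\Omega)$ it is enough to treat $f\in C_0^\infty(\Omega)$, and replacing $f$ by $|f|\in H^1_0(\Omega)$ we may assume $f$ is real-valued (this leaves $\int_\Omega V_-|f|^2$ unchanged and does not increase $\|\nabla f\|_{L^2}$). Extending $f$ and $V_-$ by zero to $\mathbb R^n$, I would then prove the bound with the truncated potential $v:=\min\{V_-,m\}$ and let $m\to\infty$ at the end by monotone convergence; since $\|v\|_{K_n(\Omega)}\le\|V_-\|_{K_n(\Omega)}$, it suffices to obtain, for each fixed bounded $v$ supported in $\Omega$,
\[
\int_\Omega v\,f^2\,dx \;\le\; c_n\,\|V_-\|_{K_n(\Omega)}\,\|\nabla f\|_{L^2}^2 .
\]
Here I would use that $(-\Delta)^{-1}$ on $\mathbb R^n$, $n\ge 3$, has kernel $c_n|x-y|^{2-n}$, the constant coming from $\sigma_{n-1}=2\pi^{n/2}/\Gamma(n/2)$ together with $\Gamma(n/2)=\tfrac{n-2}{2}\Gamma(n/2-1)$.

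The core is a self-improving estimate. Set $h:=(-\Delta)^{-1}(vf)$; since $vf\in L^1\cap L^\infty$ with compact support, elliptic regularity and the decay of the Newtonian potential show $h$ is bounded and lies in $\dot H^1(\mathbb R^n)$ with $-\Delta h=vf$, so integration by parts and Cauchy--Schwarz give $\int_\Omega v f^2\,dx=\int_{\mathbb R^n}\nabla h\cdot\nabla f\,dx\le\|\nabla h\|_{L^2}\|\nabla f\|_{L^2}$. On the other hand, writing $g:=v^{1/2}f\in L^2$,
\[
\|\nabla h\|_{L^2}^2=\int_{\mathbb R^n}(vf)\,h\,dx=\langle \mathcal K_v\,g,\,g\rangle\le\|\mathcal K_v\|_{L^2\to L^2}\int_\Omega v f^2\,dx ,
\]
where $\mathcal K_v=v^{1/2}(-\Delta)^{-1}v^{1/2}$ has the symmetric nonnegative kernel $\mathcal K_v(x,y)=c_n\,v(x)^{1/2}|x-y|^{2-n}v(y)^{1/2}$. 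Combining these two and dividing by $(\int_\Omega vf^2)^{1/2}$ (finite, and may be assumed nonzero) would yield $\int_\Omega v f^2\,dx\le\|\mathcal K_v\|_{L^2\to L^2}\|\nabla f\|_{L^2}^2$. Finally, the Schur test for symmetric kernels with weight $w(x):=v(x)^{1/2}$ (the kernel vanishes where $v=0$, so there is no division problem) gives, for $x\in\Omega$,
\[
\int_\Omega \mathcal K_v(x,y)\,w(y)\,dy=c_n\,v(x)^{1/2}\int_\Omega\frac{v(y)}{|x-y|^{n-2}}\,dy\le c_n\,\|V_-\|_{K_n(\Omega)}\,w(x),
\]
hence $\|\mathcal K_v\|_{L^2\to L^2}\le c_n\|V_-\|_{K_n(\Omega)}$, and letting $m\to\infty$ finishes the proof.

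The hard part is not any single estimate but the bookkeeping that makes the scheme rigorous: justifying that $h\in\dot H^1$ and the integrations by parts (this is exactly why the truncation $v=\min\{V_-,m\}$ is introduced), and confirming that the Schur-test integral is dominated by $\|V_-\|_{K_n(\Omega)}$ rather than some larger supremum (here it matters that $v$ is supported in $\Omega$ and that one only needs the estimate for $x\in\Omega$). That one Schur estimate is the quantitative heart and the source of the sharp constant. The argument is due to \cite{DP-2005,Simon-1982} and is also recorded in \cite{IMT-preprint}.
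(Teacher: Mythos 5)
Your proof is correct, and it is more detailed than what the paper actually supplies: the paper's ``proof'' of this lemma is a one-line reduction by zero extension to $\mathbb R^n$ together with a citation to \cite{DP-2005} and \cite{Simon-1982}, where the $\mathbb R^n$ version of the inequality is established. The argument you wrote out --- recasting the quadratic form estimate as an operator-norm bound for the Birman--Schwinger-type operator $\mathcal K_v = v^{1/2}(-\Delta)^{-1}v^{1/2}$, obtaining $\int vf^2 \le \|\mathcal K_v\|\,\|\nabla f\|_{L^2}^2$ via the self-improving chain $\int vf^2 \le \|\nabla h\|_{L^2}\|\nabla f\|_{L^2}$ and $\|\nabla h\|_{L^2}^2=\langle\mathcal K_v g,g\rangle\le\|\mathcal K_v\|\int vf^2$, and then bounding $\|\mathcal K_v\|$ by a weighted Schur test with weight $v^{1/2}$ --- is precisely the standard route used in those references, and your identification of $c_n=\Gamma(n/2-1)/(4\pi^{n/2})$ as the Newtonian-potential constant is right. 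The two bookkeeping points you flag (the truncation $v=\min\{V_-,m\}$ to make $h\in\dot H^1$ and the integrations by parts legitimate, and the degenerate-weight version of the Schur test on $\{v>0\}$) are exactly the points that need care, and both are handled correctly. So: correct proof, same approach as the cited source, spelled out rather than cited.
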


The proof is similar to Lemma~\ref{lem:0215-1}. 
So we may omit the details.

\begin{proof}
[\bf Proof of Lemma \ref{lem:zero}]
First we consider the case when $n \geq 3$. 
We prove that 
if $f$ satisfies 
\begin{equation}\label{0216-1}
f \in \mathcal D (A_V) \quad \text{and} \quad 
A_V f = 0 \text{ in } L^2 (\Omega), 
\end{equation}
then $f = 0$. 
Indeed, we find from  
Lemma~\ref{lem:V_-} and assumption \eqref{0216-1} that 
\begin{align} \label{0219-1}
0 = &\int_{\Omega} (A_V f ) \overline {f} \, dx
= \int_{\Omega} \big( |\nabla f|^2 - V_- |f|^2 \big) \, dx 
 +\int_{\Omega} V_+ |f|^2 \, dx 
 \\ \notag 
\geq& \left( 1- \frac{\Gamma(n/2 - 1) \|V_-\|_{K_{n}(\Omega)}}{4\pi^{n/2}} 
      \right)
      \| \nabla f \|_{L^2} ^2,
\end{align}
which implies that $f=0$, since 
$f\in\mathcal{D}(A_V) \subset H^1_0(\Omega)$. 
Finally, the above inequality also implies that $A_V$ is non-negative on $L^2 (\Omega)$. 
When $n = 1,2$, it follows that $V_- = 0$, and hence, 
we conclude from \eqref{0219-1} that $\| \nabla f \|_{L^2} = 0$. 
Thus we get $f = 0$ also in this case. 
This ends the proof of Lemma~\ref{lem:zero}. 
\end{proof}

%

\begin{bibdiv}
\begin{biblist}

\bib{BenZhe-2010}{article}{
   author={Benedetto, John J.},
   author={Zheng, Shijun},
   title={Besov spaces for the Schr\"odinger operator with barrier
   potential},
   journal={Complex Anal. Oper. Theory},
   volume={4},
   date={2010},
   number={4},
   pages={777--811},
}

\bib{Besov-1959}{article}{
   author={Besov, O.V.},
   title={On some families of functional spaces. Imbedding and extension
   theorems},
   language={Russian},
   journal={Dokl. Akad. Nauk SSSR},
   volume={126},
   date={1959},
   pages={1163--1165},
}

\bib{Besov-1961}{article}{
   author={Besov, O. V.},
   title={Investigation of a class of function spaces in connection with
   imbedding and extension theorems},
   language={Russian},
   journal={Trudy. Mat. Inst. Steklov.},
   volume={60},
   date={1961},
   pages={42--81},
}

\bib{BuDuYa-2012}{article}{
   author={Bui, Huy-Qui},
   author={Duong, Xuan Thinh},
   author={Yan, Lixin},
   title={Calder\'on reproducing formulas and new Besov spaces associated
   with operators},
   journal={Adv. Math.},
   volume={229},
   date={2012},
   number={4},
   pages={2449--2502},
}

\bib{BuPaTa-1996}{article}{
   author={Bui, Huy-Qui},
   author={Paluszy{\'n}ski, M.},
   author={Taibleson, M. H.},
   title={A maximal function characterization of weighted Besov-Lipschitz
   and Triebel-Lizorkin spaces},
   journal={Studia Math.},
   volume={119},
   date={1996},
   number={3},
   pages={219--246},
}

\bib{BuPaTa-1997}{article}{
   author={Bui, Huy-Qui},
   author={Paluszy{\'n}ski, M.},
   author={Taibleson, M. H.},
   title={Characterization of the Besov-Lipschitz and Triebel-Lizorkin
   spaces. The case $q<1$},
   booktitle={Proceedings of the conference dedicated to Professor Miguel de
   Guzm\'an (El Escorial, 1996)},
   journal={J. Fourier Anal. Appl.},
   volume={3},
   date={1997},
   number={Special Issue},
   pages={837--846},
}

\bib{BuDu-2015}{article}{
   author={Bui, The Anh},
   author={Duong, Xuan Thinh},
   title={Besov and Triebel-Lizorkin spaces associated to Hermite operators},
   journal={J. Fourier Anal. Appl.},
   volume={21},
   date={2015},
   number={2},
   pages={405--448},
}

\bib{CazHar_1998}{book}{
   author={Cazenave, Thierry},
   author={Haraux, Alain},
   title={An introduction to semilinear evolution equations},
   series={Oxford Lecture Series in Mathematics and its Applications},
   volume={13},
   note={Translated from the 1990 French original by Yvan Martel and revised
   by the authors},
   publisher={The Clarendon Press, Oxford University Press, New York},
   date={1998},
}

\bib{DP-2005}{article}{
   author={D'Ancona, Piero},
   author={Pierfelice, Vittoria},
   title={On the wave equation with a large rough potential},
   journal={J. Funct. Anal.},
   volume={227},
   date={2005},
   number={1},
   pages={30--77},
}	

\bib{DeSh-1993}{article}{
   author={DeVore, Ronald A.},
   author={Sharpley, Robert C.},
   title={Besov spaces on domains in ${\bf R}\sp d$},
   journal={Trans. Amer. Math. Soc.},
   volume={335},
   date={1993},
   number={2},
   pages={843--864},
}

\bib{FraJaw-1985}{article}{
   author={Frazier, Michael},
   author={Jawerth, Bj{\"o}rn},
   title={Decomposition of Besov spaces},
   journal={Indiana Univ. Math. J.},
   volume={34},
   date={1985},
   number={4},
   pages={777--799},
}

\bib{FraJaw-1990}{article}{
   author={Frazier, Michael},
   author={Jawerth, Bj{\"o}rn},
   title={A discrete transform and decompositions of distribution spaces},
   journal={J. Funct. Anal.},
   volume={93},
   date={1990},
   number={1},
   pages={34--170},
}

\bib{GR-1985}{book}{
   author={Garc{\'{\i}}a-Cuerva, Jos{\'e}},
   author={Rubio de Francia, Jos{\'e} L.},
   title={Weighted norm inequalities and related topics},
   series={North-Holland Mathematics Studies},
   volume={116},
   note={Notas de Matem\'atica [Mathematical Notes], 104},
   publisher={North-Holland Publishing Co., Amsterdam},
   date={1985},
}

\bib{GV-2003}{article}{
   author={Georgiev, Vladimir},
   author={Visciglia, Nicola},
   title={Decay estimates for the wave equation with potential},
   journal={Comm. Partial Differential Equations},
   volume={28},
   date={2003},
   number={7-8},
   pages={1325--1369},
}

\bib{Grafakos_2014}{book}{
   author={Grafakos, Loukas},
   title={Classical Fourier analysis},
   series={Graduate Texts in Mathematics},
   volume={249},
   edition={3},
   publisher={Springer, New York},
   date={2014},

}

\bib{Grafakos_2014n}{book}{
   author={Grafakos, Loukas},
   title={Modern Fourier analysis},
   series={Graduate Texts in Mathematics},
   volume={249},
   edition={3},
   publisher={Springer, New York},
   date={2014},
}


\bib{IIY-2013}{article}{
   author={Ioku, Norisuke},
   author={Ishige, Kazuhiro},
   author={Yanagida, Eiji},
   title={Sharp decay estimates of $L\sp q$-norms for nonnegative
   Schr\"odinger heat semigroups},
   journal={J. Funct. Anal.},
   volume={264},
   date={2013},
   number={12},
   pages={2764--2783},
}

\bib{IIY-2015}{article}{
   author={Ioku, Norisuke},
   author={Ishige, Kazuhiro},
   author={Yanagida, Eiji},
   title={Sharp decay estimates in Lorentz spaces for nonnegative
   Schr\"odinger heat semigroups},
   language={English, with English and French summaries},
   journal={J. Math. Pures Appl. (9)},
   volume={103},
   date={2015},
   number={4},
   pages={900--923},
}

\bib{IMT-preprint}{article}{
   author={Iwabuchi, Tsukasa},
   author={Matsuyama, Tokio},
   author={Taniguchi, Koichi},
   title={$L^p$-mapping properties for the Schr\"odinger operators in open sets of $\mathbb R^d$},
   journal={preprint, arXiv:1602.08208},
}

\bib{IMT-ISAAC}{article}{
   author={Iwabuchi, Tsukasa},
   author={Matsuyama, Tokio},
   author={Taniguchi, Koichi},
   title={$L^p$-boundedness of functions of Schr\"odinger operators on an open set 
of $\mathbb{R}^d$},
   journal={preprint},
}


\bib{JN-1994}{article}{
   author={Jensen, Arne},
   author={Nakamura, Shu},
   title={Mapping properties of functions of Schr\"odinger operators between
   $L\sp p$-spaces and Besov spaces},
   conference={
      title={Spectral and scattering theory and applications},
   },
   book={
      series={Adv. Stud. Pure Math.},
      volume={23},
      publisher={Math. Soc. Japan, Tokyo},
   },
}

\bib{JN-1995}{article}{
   author={Jensen, Arne},
   author={Nakamura, Shu},
   title={Mapping properties of functions of Schr\"odinger operators between
   $L\sp p$-spaces and Besov spaces},
   conference={
      title={Spectral and scattering theory and applications},
   },
   book={
      series={Adv. Stud. Pure Math.},
      volume={23},
      publisher={Math. Soc. Japan, Tokyo},
   },
   date={1994},
   pages={187--209},
}

\bib{KePe-2015}{article}{
   author={Kerkyacharian, Gerard},
   author={Petrushev, Pencho},
   title={Heat kernel based decomposition of spaces of distributions in the
   framework of Dirichlet spaces},
   journal={Trans. Amer. Math. Soc.},
   volume={367},
   date={2015},
   number={1},
   pages={121--189},
}

\bib{KoYa-1994}{article}{
   author={Kozono, Hideo},
   author={Yamazaki, Masao},
   title={Semilinear heat equations and the Navier-Stokes equation with
   distributions in new function spaces as initial data},
   journal={Comm. Partial Differential Equations},
   volume={19},
   date={1994},
   number={5-6},
   pages={959--1014},
}

\bib{Mura-1973}{article}{
   author={Muramatu, Tosinobu},
   title={On Besov spaces and Sobolev spaces of generalized functions
   defined on a general region},
   journal={Publ. Res. Inst. Math. Sci.},
   volume={9},
   date={1973/74},
   pages={325--396},
}

\bib{O-2006}{article}{
   author={Ouhabaz, El Maati},
   title={Sharp Gaussian bounds and $L\sp p$-growth of semigroups associated
   with elliptic and Schr\"odinger operators},
   journal={Proc. Amer. Math. Soc.},
   volume={134},
   date={2006},
   number={12},
   pages={3567--3575 (electronic)},
}

\bib{Pee-1967}{article}{
   author={Peetre, Jaak},
   title={Sur les espaces de Besov},
   language={French},
   journal={C. R. Acad. Sci. Paris S\'er. A-B},
   volume={264},
   date={1967},
   pages={A281--A283},
}

\bib{Pee-1975}{article}{
   author={Peetre, Jaak},
   title={On spaces of Triebel-Lizorkin type},
   journal={Ark. Mat.},
   volume={13},
   date={1975},
   pages={123--130},
}

\bib{Pee_1976}{book}{
   author={Peetre, Jaak},
   title={New thoughts on Besov spaces},
   note={Duke University Mathematics Series, No. 1},
   publisher={Mathematics Department, Duke University, Durham, N.C.},
   date={1976},
   pages={vi+305},
}

\bib{ReeSim_1972}{book}{
   author={Reed, Michael},
   author={Simon, Barry},
   title={Methods of modern mathematical physics. I. Functional analysis},
   publisher={Academic Press, New York-London},
   date={1972},
}

\bib{RT-toappear}{article}{
   author={Ruzhansky, M.},
   author={Tokmagambetov, N.}
   title={Nonharmonic analysis of boundary value problems},
   journal={Int. Math. Res. Notices},
   pages={doi:10.1093/imrn/rnv243},
}

\bib{Ryc-1998}{article}{
   author={Rychkov, V. S.},
   title={Intrinsic characterizations of distribution spaces on domains},
   journal={Studia Math.},
   volume={127},
   date={1998},
   number={3},
   pages={277--298},
}

\bib{Ryc-1999}{article}{
   author={Rychkov, Vyacheslav S.},
   title={On restrictions and extensions of the Besov and Triebel-Lizorkin
   spaces with respect to Lipschitz domains},
   journal={J. London Math. Soc. (2)},
   volume={60},
   date={1999},
   number={1},
   pages={237--257},
}

\bib{Simon-1982}{article}{
   author={Simon, Barry},
   title={Schr\"odinger semigroups},
   journal={Bull. Amer. Math. Soc. (N.S.)},
   volume={7},
   date={1982},
   number={3},
   pages={447--526},
}

\bib{Stein_1970}{book}{
   author={Stein, Elias M.},
   title={Singular integrals and differentiability properties of functions},
   series={Princeton Mathematical Series, No. 30},
   publisher={Princeton University Press, Princeton, N.J.},
   date={1970},
}


\bib{Triebel_1978}{book}{
   author={Triebel, Hans},
   title={Interpolation theory, function spaces, differential operators},
   publisher={VEB Deutscher Verlag der Wissenschaften, Berlin},
   date={1978},
   pages={528},
}

\bib{Triebel_1983}{book}{
   author={Triebel, Hans},
   title={Theory of function spaces},
   series={Monographs in Mathematics},
   volume={78},
   publisher={Birkh\"auser Verlag, Basel},
   date={1983},
}

\bib{Triebel_1992}{book}{
   author={Triebel, Hans},
   title={Theory of function spaces. II},
   series={Monographs in Mathematics},
   volume={84},
   publisher={Birkh\"auser Verlag, Basel},
   date={1992},
   pages={viii+370},
}

\bib{Triebel-2002}{article}{
   author={Triebel, Hans},
   title={Function spaces in Lipschitz domains and on Lipschitz manifolds.
   Characteristic functions as pointwise multipliers},
   journal={Rev. Mat. Complut.},
   volume={15},
   date={2002},
   number={2},
   pages={475--524},
}

\bib{Triebel_2006}{book}{
   author={Triebel, Hans},
   title={Theory of function spaces. III},
   series={Monographs in Mathematics},
   volume={100},
   publisher={Birkh\"auser Verlag, Basel},
   date={2006},
   pages={xii+426},
}

\bib{TriWin-1996}{article}{
   author={Triebel, Hans},
   author={Winkelvo{\ss}, Heike},
   title={Intrinsic atomic characterizations of function spaces on domains},
   journal={Math. Z.},
   volume={221},
   date={1996},
   number={4},
   pages={647--673},
}

\bib{Ziemer_1989}{book}{
   author={Ziemer, William P.},
   title={Weakly differentiable functions},
   series={Graduate Texts in Mathematics},
   volume={120},
   note={Sobolev spaces and functions of bounded variation},
   publisher={Springer-Verlag, New York},
   date={1989},
}

\end{biblist}
\end{bibdiv}

\end{document}